\documentclass[11pt, a4paper]{amsart}
\usepackage{amssymb, array, amsmath, amscd, pdfpages, enumerate, amsthm, setspace,mathtools}

\usepackage[utf8]{inputenc}
\usepackage{amssymb}
\usepackage{bm}
\usepackage{overpic}
\usepackage[colorlinks=true,allcolors=magenta]{hyperref}
\usepackage{xcolor}
\usepackage{graphicx} 
\usepackage{ifthen}
\usepackage[capitalize,noabbrev]{cleveref}

\newcommand{\BNsp}[4]{(#1,#2,#3,#4)}
\newcommand{\BNspStandard}{$\BNsp{U_b}{U}{X}{Y}$}
\newcommand{\Dom}{D}

\DeclareMathOperator{\re}{Re}

\DeclareMathOperator{\diag}{diag}

\newcommand{\mcA}{{\mathfrak{A}}}     
\newcommand{\mcB}{{\mathfrak{B}}}     
\newcommand{\mcC}{{\mathfrak{C}}}     
\newcommand{\mcK}{{\mathfrak{K}}}     

\newcommand*{\C}{{\mathbb{C}}}     
\newcommand*{\R}{{\mathbb{R}}}     
     
\newcommand*{\N}{{\mathbb{N}}}     
\newcommand*{\Hloc}[1]{H^{#1}_{loc}}

\newcommand*{\Lin}{{\mathcal{L}}}   
\newcommand{\ran}{{\mathcal{R}}}   
\renewcommand{\ker}{{\mathcal{N}}}

\newcommand*{\abs}[1]{\lvert#1\rvert}
\newcommand*{\norm}[1]{\lVert#1\rVert}
\newcommand*{\set}[1]{\{#1\}}
\newcommand*{\setm}[2]{\{\,#1\mid#2\,\}}   
\newcommand*{\iprod}[2]{\langle#1,#2\rangle}    
    
\newcommand*{\Norm}[2][default]{\ifthenelse{\equal{#1}{default}}{\left\lVert#2\right\rVert}{\ldelim{#1}{\lVert}#2\rdelim{#1}{\rVert}}}

\newcommand*{\ldelim}[2]{\csname#1l\endcsname#2}   
\newcommand*{\rdelim}[2]{\csname#1r\endcsname#2}   
\newcommand*{\mdelim}[2]{\csname#1m\endcsname#2}   
 
\newcommand*{\Lp}[1][p]{L^{#1}}

\newcommand*{\Lploc}[1][p]{L^{#1}_{\text{loc}}}

  \newcommand{\pmat}[1]{\begin{bmatrix}#1\end{bmatrix}}
\newcommand{\pmatsmall}[1]{\begin{bsmallmatrix}#1\end{bsmallmatrix}}

\newcommand{\eq}[1]{\begin{align*}#1\end{align*}}
\newcommand{\eqn}[1]{\begin{align}#1\end{align}}
\newcommand{\ieq}[1]{$#1$}

\newcommand{\gs}{\sigma}

\newcommand{\gb}{\beta}

\newcommand{\gd}{\delta}
\newcommand{\gl}{\lambda}
\newcommand{\gw}{\omega}

\newcommand{\inv}{^{-1}}
 
\newcommand{\tp}{}

\newcommand{\citel}[2]{\cite[#2]{#1}}

\renewcommand{\pmat}[1]{\begin{bmatrix}#1\end{bmatrix}}
\renewcommand{\pmatsmall}[1]{\begin{bsmallmatrix}#1\end{bsmallmatrix}}

\newcommand{\F}{\mathbb{F}}
\newcommand{\Pt}[1][t]{\mathbf{P}_{#1}}

\renewcommand{\Dom}{D}

\renewcommand{\AA}{\mathfrak{A}}
\newcommand{\BB}{\mathfrak{B}}
\newcommand{\CC}{\mathfrak{C}}
\newcommand{\KK}{\mathfrak{K}}

\renewcommand{\pmat}[1]{\begin{bmatrix}#1\end{bmatrix}}
\renewcommand{\pmatsmall}[1]{\begin{bsmallmatrix}#1\end{bsmallmatrix}}

\renewcommand{\ran}{\textup{Ran}}
\renewcommand{\ker}{\textup{Ker}}

\newcommand{\B}{\Lin}

\newcommand*{\dda}[3][1]{\ifthenelse{\equal{#1}{1}}{\frac{d#3}{d#2}}{\frac{d^{#1}#3}{d#2^{#1}}}}
\newcommand*{\ddb}[2][1]{\ifthenelse{\equal{#1}{1}}{\frac{d}{d#2}}{\frac{d^{#1}}{d#2^{#1}}}}
\newcommand*{\pd}[3][1]{\ifthenelse{\equal{#1}{1}}{\frac{\partial{#2}}{\partial{#3}}}{\frac{\partial^{#1}{#2}}{\partial#3^{#1}}}}
\newcommand*{\pdb}[2][1]{\ifthenelse{\equal{#1}{1}}{\frac{\partial}{\partial{#2}}}{\frac{\partial^{#1}}{\partial#2^{#1}}}}

\newcommand{\zinf}[1][0]{[#1,\infty)}

\newcommand{\dtot}{d_{\mbox{\scriptsize\textit{tot}}}}

\newcommand{\detailTOCHECK}[1]{{\color{gray}#1}}
\renewcommand{\detailTOCHECK}[1]{}

\newtheorem{theorem}{Theorem}[section]
\newtheorem{lemma}[theorem]{Lemma}
\newtheorem{proposition}[theorem]{Proposition}

\newtheorem{assumption}[theorem]{Assumption}
\theoremstyle{definition}
\newtheorem{definition}[theorem]{Definition}
\newtheorem{remark}[theorem]{Remark}

\numberwithin{equation}{section}

\crefname{proposition}{Proposition}{Propositions}
\Crefname{proposition}{Proposition}{Propositions}
\crefname{lemma}{Lemma}{Lemmas}
\Crefname{lemma}{Lemma}{Lemmas}
\crefname{corollary}{Corollary}{Corollaries}
\Crefname{corollary}{Corollary}{Corollaries}
\crefname{definition}{Definition}{Definitions}
\Crefname{definition}{Definition}{Definitions}
\crefname{remark}{Remark}{Remarks}
\Crefname{remark}{Remark}{Remarks}
\crefname{example}{Example}{Examples}
\Crefname{example}{Example}{Examples}
\crefname{assumption}{Assumption}{Assumptions}
\Crefname{assumption}{Assumption}{Assumptions}

\begin{document}

\title[Active Disturbance Rejection Control for BCS]{Active Disturbance Rejection for Boundary Control Systems}

\author[J.-P.~Humaloja]{Jukka-Pekka Humaloja}
\address[J.-P.~Humaloja]{Department of Electrical and Computer
Engineering, Technical University of Crete, Chania, Greece.}
 \email{jhumaloja@tuc.gr}

\author[L.~Paunonen]{Lassi Paunonen}
\address[L.~Paunonen]{Mathematics Research Centre, Tampere University, P.O.~ Box 553, 33101 Tampere, Finland}
 \email{lassi.paunonen@tuni.fi}

\thanks{This work was supported by the Research Council of Finland Grant number 349002.}

\thispagestyle{plain}

\begin{abstract}
We consider stabilisation of abstract boundary control systems and controlled partial differential equations with general unknown input disturbances and unmodeled nonlinearities at the input. We utilise the active disturbance rejection control approach to design a controller which rejects the input disturbance and achieves stability and external well-posedness of the closed-loop system for a class of boundary control systems with collocated inputs and outputs. We apply our main results to design controllers for one-dimensional wave and heat equations.
\end{abstract}

\keywords{
Dynamic stabilisation, unknown input disturbance, active disturbance rejection control, boundary control system, wave equation, heat equation.
}
\subjclass[2020]{%
93C25, 
93C10, 
93D15, 
  35B35 
(93C20,  
47D06, 
47A10)
}

\maketitle

\section{Introduction}
\label{sec:intro}

In this article we study stabilisation 
of infinite-dimensional linear systems and partial differential equations (PDEs) in the situation where the input of the system is corrupted by an unknown disturbance signal and unmodeled nonlinearities.
Our main interest is in controller design for classes of PDE systems with boundary inputs and outputs. Motivated by this we consider control of an abstract boundary control system~\cite{Sal87a,MalSta06} of the form
\begin{subequations}
\label{eq:BCSintro}
\eqn{
\dot x(t) &= \mcA x(t) , \hspace{4cm} x(0)=x_0 \\
\label{eq:BCSintro2}
\mcB x(t) &=  u(t) + d(t)+ \phi_s(x(t)) + \phi_o(y(t)) \\
y(t) &= \mcC x(t).
}
\end{subequations}
Here $x$, $u$, and $y$ are the state, the input and the output of the system. 
In a boundary control system $\BB$ is a boundary trace operator  chosen in such a way that~\eqref{eq:BCSintro2} describes the controlled boundary conditions of the PDE system.
 The function $d$  is the unknown input disturbance while the possibly unknown functions $\phi_s$ and $\phi_o$  represent unmodeled nonlinearities at the input of the system.
As our main result we introduce a dynamic controller based on the \emph{active disturbance rejection control} (ADRC)~\cite{GuoZha16book,FenGuo17a} approach to estimate and reject the ``total disturbance'' $\dtot=d+ \phi_s(x) + \phi_o(y)$ and to drive the state $x$ of the system~\eqref{eq:BCSintro} to zero as $t\to \infty$.

Active disturbance rejection control was first introduced for nonlinear systems~\cite{Han09}, 
but it has also been demonstrated to be a powerful tool in control of PDE systems with boundary control and observation~\cite{ZhoGuo17, FenGuo16a, KelSae23}. In the PDE setting, the estimation and rejection of the unknown total disturbance is based on the simple but very useful observation that if the output $y$ of the system~\eqref{eq:BCSintro} is connected to an inverse of the linear system $(\BB,\AA,\CC)$ as in \cref{fig:DistEst}, then the output of the inverse provides an asymptotic estimate for the input $u+\dtot $ of the original system, thus leading to an estimate $\hat d$ for $\dtot$. In the controller design the total disturbance can then be rejected using an input of the form $u=u_0-\hat d$, where $u_0$ is a control signal used for stabilisation of the system without the disturbance $\dtot$. 

\begin{figure}[h!]
\begin{center}
\includegraphics[width=.7\linewidth]{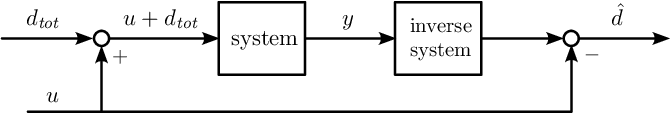}
\end{center}
\caption{The estimation scheme for the total disturbance.}
\label{fig:DistEst}
\end{figure}

The main limitation of the disturbance estimation scheme in \cref{fig:DistEst} is that the inverse $P\inv$ needs to be a well-defined dynamical system. In the case of finite-dimensional linear systems this is only possible if the system has an invertible feed\-through matrix, in which case the input disturbance estimation becomes easy. However, there are several nontrivial PDE systems whose inverses have well-defined dynamics. In particular, constructing the inverse of a PDE system with boundary control and observation is done by simply reinterpreting the boundary conditions defining the input to instead be a measurement at the boundary values of the state, and by changing the boundary traces defining the output to be the new boundary conditions of the systems determined by the values of $y$. 
The same process can be completed in the setting of boundary control systems of the form~\eqref{eq:BCSintro} to construct an estimator 
\begin{subequations}
\label{eq:BCSinvintro}
\eqn{
\dot x_i(t) &= \mcA x_i(t) , \qquad\quad x_i(0)=x_{i0} \\
 \mcC x_i(t)&=y(t) \\
\hat d(t)&= \mcB x_i(t) -u(t)   .
}
\end{subequations}
Roughly stated, if the inverted system $(\CC,\AA,\BB)$ has well-defined and stable dynamics, then the estimation error $\hat d-\dtot $ converges to zero as $t\to \infty$. In this situation $\hat d$ can be used to asymptotically reject the total disturbance $\dtot$ at the input of the system, as well as to approximate $\dtot$ in a \emph{linear} observer for the system~\eqref{eq:BCSintro} for the purpose of dynamic stabilisation.
Our controller in \cref{sec:AbstractController} consists of exactly such an inverted system and a linear Luenberger-type observer for~\eqref{eq:BCSintro}. Finally, a third block is added for pre-stabilisation of~\eqref{eq:BCSintro}
before the inversion.

Active disturbance rejection control has been employed in the control of various PDE models on both one-dimensional and multi-dimensional spatial domains. 
It has been especially actively used in the unknown disturbance rejection and stabilisation of wave equations, both on one-dimensional domains~\cite{GuoJin13,GuoJin15,FenGuo17b,ZhoWei18,MeiZho21} and multi-dimensional domains~\cite{ZhoGuo15,FenGuo16a,ZhoGuo17,FenGuo17a}, and for models including additional ODE dynamics~\cite{Mei22,KelSae23}. 
These existing works differ in the types of boundary conditions and output configurations used in the models, and in whether or not input nonlinearities are considered.
Similarly, ADRC design has been successfully used in the control of Euler--Bernoulli beam equations~\cite{ZhoFen18,ZhoFen21}, a Timoshenko beam equation~\cite{FanXu25}, and Schrödinger equations~\cite{Zho17,JiaLiu20}. 
Active disturbance rejection has also been demonstrated to be a powerful tool in the control of heat equations, which is particularly remarkable because
 the inverses of these systems are characteristically ill-posed. 
In the references on heat equations on one-dimensional domains~\cite{FenGuo17d} and multi-dimensional domains~\cite{ZhoGuo17,FenXu20} the authors demonstrate that 
the lack of well-posedness of the inverse system is naturally compensated by the smoothing effect of the parabolic models.
We take advantage of this same phenomenon in our second main result in \cref{thm:ADRCParab}.

In this article we consider controller design for abstract boundary control systems and the class of PDE models which can be represented within this class. Our main assumption  is that the linear version of~\eqref{eq:BCSintro} (i.e., with $\phi_s=0$ and $\phi_o=0$) is \emph{well-posed} in the sense that for all classical solutions its state $x$ and $y$  depend continuously on the initial state $x_0$ and the input $u+d$ on finite time intervals (see \cref{def:BCSwellposed}). 
Moreover, $\phi_s$ is locally Lipschitz with linear growth and $\phi_o$ is globally Lipschitz. 
In our two main results in \cref{thm:ADRCmain} and \cref{thm:ADRCParab}
the inverted system in the controller is assumed to posses different weaker well-posedness properties.
Our main results establish the existence of classical and generalised solutions for the nonlinear closed-loop system consisting of~\eqref{eq:BCSintro} and the controller and establishes exponential convergence of the disturbance estimation and the controlled state to zero under minimal conditions.
While infinite-dimensional systems theory has had a role in the ADRC design for PDEs~\cite{ZhoGuo15,ZhoWei18,FenXu20}, this methodology has not previously been employed in the context of abstract infinite-dimensional systems.
Our results also strengthen the conclusions of existing ADRC designs for PDE models 
 in the following aspects: 
\begin{itemize}
\item Majority of the earlier references show asymptotic convergence of the disturbance estimation and the state as $t\to \infty$. Our results establish exponentially fast convergence.
\item Our controller is capable of handling boundary nonlinearities, whereas earlier results have only considered the case $\phi_o=0$.
\item We add external inputs to the closed-loop system and establish external incremental well-posedness of the nonlinear closed-loop system.
\item Our results present a controller structure for the full controller and
collects the conditions on the controller parameters to a set of assumptions which can be used efficiently in the controller tuning.
\end{itemize}

We illustrate the use of our results for PDEs 
  in \cref{sec:PDEs} where we design controllers for one-dimensional wave and heat equations.
It is worthwile to note that the active disturbance rejection controller 
establishes the existence of closed-loop solutions under much weaker conditions than the general required conditions for the open loop system~\eqref{eq:BCSintro}, namely, without global Lipschitz property of $\phi_s$, without bounds on the Lipschitz constant of $\phi_o$ (cf.~\citel{TucWei14}{Sec.~7},~\cite{GuiLog19}) and without passivity or monotonicity assumptions (cf.~\cite{HasCal19,HasPau25arxiv}).
Our use of the abstract boundary control systems framework is motivated by its ease of use in controller design for PDEs. Indeed, representation of a PDE with boundary inputs and outputs in the form~\eqref{eq:BCSintro} is relatively easy compared to finding its representation as a regular linear system or a system node, both involving an unbounded control operator $B$. In addition, interpreting the abstract controller as a PDE model is similarly straightforward.

\textbf{Notation.} 
\label{sec:notation}
If $X$ and $Y$ are Banach spaces and $A:\Dom(A)\subset X\rightarrow Y$ is a linear operator we denote by $\Dom(A)$, $\ker(A)$, and $\ran(A)$ the domain, the kernel, and the range of $A$, respectively. The space of bounded linear operators from $X$ to $Y$ is denoted by $\Lin(X,Y)$ and we write $\Lin(X)$ for $\Lin(X,X)$. If $A: \Dom(A)\subset X\rightarrow X$, then $\gs(A)$
and $\rho(A)$ denote the spectrum
and the \mbox{resolvent} set of $A$, respectively. 
The inner product on a Hilbert space is denoted by $\iprod{\cdot}{\cdot}$.
For $T\in \Lin(X)$ on a Hilbert space $X$ we define $\re T = \frac{1}{2}(T+T^\ast)$, and we write $T\ge cI$ for $c\ge0$ when $T-cI\ge 0$.
The growth bound of a strongly continuous semigroup $T$ is denoted by $\gw_0(T)$. 
We denote $f(x)\lesssim g(x)$ if there exists $M>0$ such that $f(x)\le Mg(x)$ for all parameters $x$.
The truncation of a function $f:I\to X$ defined on an interval $I\supset [0,t]$ to the interval $[0,t]$ is denoted by $\Pt f:[0,t]\to X$.
For $\gb\in\R$ we define $\C_{\gb}^+ = \setm{\gl\in\C}{\re \gl>\gb}$ and 
$H_\infty(\C_{\gb}^+;X)=\setm{f:\C_\gb^+\to X}{f \mbox{ is analytic and } \sup_{\re\gl>\gb}\norm{f(\gl)}<\infty}$.
For a Hilbert space $X$ and for $\gw\in\R$ we define 
\ieq{
\Lp[2]_\gw (0,\infty;X) = \setm{f\in \Lploc[2](0,\infty;X)}{t\mapsto e^{-\gw t}f(t)\in \Lp[2](0,\infty;X)}
}
with norm $\norm{f}_{\Lp[2]_\gw(0,\infty)}=\norm{e^{-\gw \cdot}f}_{\Lp[2](0,\infty)}$.
A function $f:X\to Y$ has \emph{linear growth} if there exists $M>0$ such that $\norm{f(x)}\le M(1+\norm{x})$ 
for all $x\in X$.

\section{Boundary Nodes}
\label{sec:BCS}

In this section we recall and introduce selected background results on abstract boundary control systems. We consider a system 
\begin{subequations}
\label{eq:BCS}
\eqn{
\dot x(t) &= \mcA x(t) + B_i(u(t) + \phi_s(x(t)) + \phi_o(y(t))), \qquad x(0)=x_0 \\
\mcB x(t) &=  Q (u(t) + \phi_s(x(t)) + \phi_o(y(t))) \\
y(t) &= \mcC x(t)
}
\end{subequations}
on a Hilbert space $X$, with an input space $U$ and an output space $Y$. 
Throughout the paper we assume that 
$\phi_s: X\to U$ is locally Lipschitz continuous and has linear growth
 and that $\phi_o:Y\to U$ is globally Lipschitz continuous.
In addition, we assume that the operators in~\eqref{eq:BCS} form a \emph{boundary node} defined below.

\begin{definition}\label{def:Bnode}
	The tuple $(\mcB,\mcA,\mcC,Q,B_i)$ is said to be a \emph{boundary node} on the Hilbert spaces $\BNsp{U_b}{U}{X}{Y}$ if the following hold.
	\begin{enumerate}
		\item[(i)] $\AA: \Dom(\AA)\subset X\to X$, $\mcB\in \mathcal{L}(D(\mcA),U_b)$, $\mcC\in \mathcal{L}(D(\mcA),Y)$, $Q\in \Lin(U,U_b)$, and $B_i\in \Lin(U,X)$;
		\item[(ii)] The restriction $A=\mcA\vert_{\ker(\mcB)}$ with domain $\Dom(A)=\ker(\BB)$ generates a strongly continuous semigroup on $X$;
		\item[(iii)]   $\mcB$ has a bounded right-inverse, i.e., there exists 
     $\mcB^r\in\mathcal{L}(U_b,D(\mcA))$
    such that $\mcB\mcB^r=I$.
	\end{enumerate}	
\end{definition}

We define the \emph{transfer functions} $H: \rho(A)\to \mathcal L(U,X)$ and $P: \rho(A)\to \mathcal L(U,Y)$ of the boundary node so that for $\gl\in \rho(A)$ and $u\in U $ we have $H(\gl)u := x$ and $P(\gl)u := \mcC x$ when $x\in \Dom(\mcA) $ is the unique element such that $(\gl - \mcA)x=B_i u$ and $\mcB x= Qu$. The existence and analyticity of the transfer functions follow from~\citel{NicPau25}{Prop. 2.4}.
We also note that $H(\gl)=H_b(\gl)Q + (\gl-A)\inv B_i$ and $P(\gl)=P_b(\gl)Q + \mcC(\gl-A)\inv B_i$, where $H_b$ and $P_b$ are the transfer functions of the boundary node $(\mcB,\mcA,\mcC,I,0)$ on $\BNsp{U_b}{U_b}{X}{Y}$.
We have $H(\gl)\in \Lin(U,\Dom(\mcA))$, $\mcB H(\gl)=Q$, $P(\gl)=\mcC H(\gl)$ and $(\gl-\mcA)H(\gl)=B_i$ for $\gl\in\rho(A)$ by~\citel{NicPau25}{Prop.~2.4}.

\begin{definition}
\label{def:Solutionsnonlin}
Let $(\mcB,\mcA,\mcC,Q,B_i)$ be a boundary node on $\BNsp{U_b}{U}{X}{Y}$.
A triple $(x,u,y)$ is called a \emph{classical solution of} \eqref{eq:BCS} (\emph{on} $\zinf$) if
\begin{itemize}
    \item $x\in C^1(\zinf;X)$, $u\in C(\zinf;U)$, and $y\in C(\zinf;Y)$
    \item $x(t)\in \Dom(\mcA)$ and 
     \eqref{eq:BCS} hold for all $t\geq 0$.
\end{itemize}
A triple $(x,u,y)$ is called a \emph{generalised solution of}~\eqref{eq:BCS} (\emph{on} $\zinf$) if 
\begin{itemize}
    \item $x\in C(\zinf;X)$, 
    $u\in\Lploc[2](0,\infty;U)$, and $y\in\Lploc[2](0,\infty;Y)$
    \item there exist classical solutions $(x_k,u_k,y_k)$, $k\in\N$,  of  \eqref{eq:BCS} on $\zinf$ such that
$(\Pt[\tau] x_k,\Pt[\tau] u_k,\Pt[\tau] y_k)\tp\to (\Pt[\tau] x,\Pt[\tau] u,\Pt[\tau] y)\tp$  for every $\tau>0$ 
as $k\to \infty$ in $C([0,\tau];X)\times L^2(0,\tau;U) \times L^2(0,\tau;Y)$.
\end{itemize}
\end{definition}

\subsection{Well-Posedness of Boundary Nodes}

In this section we define well-posedness of a boundary node and analyse the existence of solutions of~\eqref{eq:BCS}.
When $(\mcB,\mcA,\mcC,Q,B_i)$ is a boundary node on $\BNsp{U_b}{U}{X}{Y}$, we have from~\citel{MalSta06}{Lem. 2.6} that for every
 $x_0\in \Dom(\mcA)$ and $u\in C^2([0,\tau];U)$ satisfying $\mcB x_0=Qu(0)$ 
the equation~\eqref{eq:BCS} has a unique classical solution $(x,u,y)$ satisfying $x(0)=x_0$.
We use these solutions to define the \emph{input map} $\Phi_\tau : C_\ell^2([0,\tau];U)\to X$, \emph{output map} $\Psi_\tau: \Dom(\mcA)\to L^2(0,\tau;Y)$, and \emph{input-output map} $\F_\tau: C_\ell^2([0,\tau];U)\to L^2(0,\tau;Y)$, where $\tau>0$ and $C_\ell^2([0,\tau];U):=\setm{f\in C^2([0,\tau];U)}{u(0)=0}$.
More precisely, we define $\Phi_\tau u =  x(\tau)$ and $\F_\tau u=\Pt[\tau] y$, where $(x,u,y)$ is the classical solution of~\eqref{eq:BCS}  satisfying $x(0)=x_0$ corresponding to $u\in C_\ell^2([0,\tau];U)$ and $x_0=0$. Moreover, we define $\Psi_\tau x_0= \Pt[\tau]y$ where $(x,u,y)$ is the classical solution of~\eqref{eq:BCS} satisfying $x(0)=x_0$ with $u\equiv 0$ and $x_0\in \Dom(A)$.
We define the well-posedness of a boundary node following~\citel{JacZwa12book}{Def.~13.1.3} and~\cite{Sta05book}.

\begin{definition}
\label{def:BCSwellposed}
Let $(\mcB ,\mcA ,\mcC ,Q,B_i)$ be a boundary node on $\BNsp{U_b}{U}{X}{Y}$.
\begin{itemize}
\item The boundary node is \emph{(externally) well-posed} if there exist $\tau>0$ and $M_\tau>0$ such that
for all $x_0\in \Dom(\mcA)$ and $u\in C^2([0,\tau];U)$ with $\mcB x_0=Qu(0)$ the classical solution $(x,u,y)$ of~\eqref{eq:BCS} satisfying $x(0)=x_0$ satisfies
\eqn{
\label{eq:BCSWPdef}
\norm{x(\tau)}_X^2 + \int_0^\tau \norm{y(t)}_Y^2dt
\leq M_\tau \left( \norm{x_0}_X^2 + \int_0^\tau \norm{u(t)}_U^2 dt \right);
}
\item
The boundary node has a \emph{well-posed input map} if there exist $\tau>0$ and $M_\tau>0$ such that
$\norm{\Phi_\tau u}\le M_\tau \norm{u}_{L^2(0,\tau)}$ for all $u\in C_\ell^2([0,\tau];U)$.
\item
The boundary node has a \emph{well-posed output map} if there exist $\tau>0$ and $M_\tau>0$ such that $\norm{\Psi_\tau x_0}_{L^2(0,\tau)} \le M_\tau \norm{x_0}_X$ for all $x_0\in \Dom(A)$.
\end{itemize}
\end{definition}

We note that by definition a well-posed boundary node has a well-posed input map and a well-posed output map.
We have from~\cite{MalSta06} and~\cite{Sta05book,TucWei14} that if one of the estimates in \cref{def:BCSwellposed} holds for some $\tau>0$, then it also holds for every $\tau>0$ (with a modified constant $M_\tau>0$).
If a boundary node has a well-posed input map, then $\Phi_\tau$ extend to operators $\Phi_\tau\in \Lin(L^2(0,\tau;U),X)$, and if it has a well-posed output map, then $\Psi_\tau$ extend to operators $\Psi_\tau\in \Lin(X,L^2(0,\tau;Y))$.
Finally, if the boundary node is well-posed, then $\F_\tau$ extend to operators $\F_\tau\in \Lin(L^2(0,\tau;U),L^2(0,\tau;Y))$. 
In the last case the tuple $(T,\Phi,\Psi,\F)$ with $\Phi=(\Phi_\tau)_{\tau\ge 0}$, $\Psi=(\Psi_\tau)_{\tau\ge 0}$, and $\F=(\F_\tau)_{\tau\ge 0}$ is a well-posed linear system in the sense of~\cite{TucWei14}.

\begin{remark}
\label{rem:WPtestclass}
It is possible to test well-posedness with a smaller class of inputs and initial conditions. Indeed, 
$(\mcB ,\mcA ,\mcC ,Q,B_i)$ is well-posed if 
 there exist $\tau>0$ and $M_\tau>0$ such that~\eqref{eq:BCSWPdef} holds
for all
classical solutions $(x,u,y)$ corresponding to $x_0\in \Dom(A)$
 and 
$u\in C_\ell^2([0,\tau];U)$.
\end{remark}

The following two results show that generalised solutions of~\eqref{eq:BCS} coincide with \emph{mild solutions} of the linear (with $\phi_s=0$ and $\phi_o=0$) and the nonlinear system~\eqref{eq:BCS}.

\begin{proposition}[{\citel{FkiPau26arxiv}{Prop.~2.5}}]
\label{prp:WPBCSsolutions}
Let $(\mcB ,\mcA ,\mcC ,Q,B_i)$ be a well-posed boundary node on $\BNsp{U_b}{U}{X}{Y}$ and let $(T,\Phi,\Psi,\F)$ be the associated well-posed linear system. Let $\phi_s=0$ and $\phi_o=0$.
If $x_0\in \Dom(\mcA)$ and $u\in \Hloc{1}(0,\infty;U)$ satisfy $\mcB x_0= Qu(0)$, 
then~\eqref{eq:BCS} has a unique classical solution $(x,u,y)$ on $\zinf$. This solution is defined by
\begin{subequations}
\label{eq:BCSmildsol}
\eqn{
x(t) &= T(t)x_0 + \Phi_t \Pt u,  &&t\ge 0,\\
\Pt y &= \Psi_t x_0 + \F_t \Pt u,  &&t\ge0,
}
\end{subequations}
and we have $y\in \Hloc{1}(0,\infty;Y)$.
If $x_0\in X$ and $u\in \Lploc[2](0,\infty;U)$,  
then~\eqref{eq:BCS} has a unique generalised solution $(x,u,y)$ on $\zinf$ which is defined by~\eqref{eq:BCSmildsol}.
\end{proposition}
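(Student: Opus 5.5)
The plan has two parts: classical solutions first, then generalised solutions by density. Throughout $\phi_s=\phi_o=0$, so the system is linear.

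\textbf{Classical solutions.} Fix $x_0\in\Dom(\mcA)$ and $u\in\Hloc{1}(0,\infty;U)$ with $\mcB x_0=Qu(0)$. I would reduce to a homogeneous abstract Cauchy problem by lifting the boundary input with a transfer function value. Pick $\gl\in\rho(A)$ and set $z(t)=x(t)-H(\gl)u(t)$. Since $\mcB H(\gl)=Q$ and $(\gl-\mcA)H(\gl)=B_i$, a formal computation gives $\mcB z=0$ and
\[
\dot z = Az + \gl H(\gl)u - H(\gl)\dot u .
\]
Because $H(\gl)\in\Lin(U,\Dom(\mcA))$, the forcing term $f=\gl H(\gl)u-H(\gl)\dot u$ lies in $L^2_{loc}(0,\infty;X)$.

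This is where $H^1$ regularity is weaker than the $C^2$ regularity of \citel{MalSta06}{Lem.~2.6}. With only $u\in H^1$, $f$ is merely $L^2$, and the mild solution $z$ is not automatically in $\Dom(A)$. So I would first treat $u\in C^2$: there the lemma gives a classical solution, which coincides with $T(t)x_0+\Phi_t\Pt u$ by the definitions of $\Phi_t$ and $\Psi_t$, linearity, and the compatibility splitting. For $u\in H^1$ I would then approximate by $u_k\in C^2$ in $H^1(0,\tau)$ with $u_k(0)=u(0)$, keeping $x_0$ fixed.

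Well-posedness (the extended $\Phi_t,\Psi_t,\F_t$) makes the right-hand sides of~\eqref{eq:BCSmildsol} converge. The main obstacle is upgrading this to $C^1$ convergence of $x_k$ and continuity of $y$. For that I would use the differentiated system: $v_k=\dot x_k$ formally solves the same boundary node with input $\dot u_k$ and initial state $\mcA x_0+B_iu(0)$. Since $\dot u_k\to\dot u$ in $L^2$, well-posedness gives $\dot x_k$ Cauchy in $C([0,\tau];X)$ and $\dot y_k$ Cauchy in $L^2$. This is where the compatibility condition is needed, to get a uniform bound on $\mcA x_k$; the rigorous version goes through $\mcA x_k=\dot x_k-B_iu_k$ and closedness of $\mcA$ together with $\mcB$ (i.e.\ boundedness of $\mcB,\mcC$ on $\Dom(\mcA)$ in graph norm). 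It yields:
\begin{itemize}
\item $x\in C^1$ and $x(t)\in\Dom(\mcA)$, because $x_k\to x$ in $C([0,\tau];\Dom(\mcA))$;
\item $y=\mcC x$ continuous, and in fact $y\in\Hloc{1}$, because $\dot y_k$ converges in $L^2$.
\end{itemize}
Uniqueness follows since the difference of two classical solutions solves $\dot w=Aw$ with $w(0)=0$, hence $w=0$.

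\textbf{Generalised solutions.} For $x_0\in X$ and $u\in\Lploc[2]$, I would take $x_{0k}\in\Dom(\mcA)$ and $u_k\in\Hloc{1}$ with $\mcB x_{0k}=Qu_k(0)$, $x_{0k}\to x_0$ and $u_k\to u$ in $L^2$. One way is to take $x_{0k}\in\Dom(A)$ dense and $u_k$ vanishing at $0$. Boundedness of $T,\Phi_t,\Psi_t,\F_t$ then gives convergence to~\eqref{eq:BCSmildsol} in $C([0,\tau];X)\times L^2\times L^2$; the $C([0,\tau];X)$ part uses the uniform bound of $\Phi_t$ for $t\le\tau$. This proves existence.

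For uniqueness, let $(x_k,u_k,y_k)$ be any approximating classical sequence. By the first part each one satisfies~\eqref{eq:BCSmildsol}, so the limit must satisfy it as well, and is therefore given by the formula.
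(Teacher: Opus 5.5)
The paper does not prove this proposition; it quotes it from \cite{FkiPau26arxiv}. Your argument therefore has to stand on its own. Its architecture is workable: $C^2$ data via \cite{MalSta06}, then $\Hloc{1}$ inputs by approximation with $x_0$ fixed, then closedness of $\mcA$, then density. Two steps, however, are not justified as written.

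\textbf{The derivative step.} $\dot x_k$ is not a classical solution of the boundary node. In general $v_0:=\mcA x_0+B_iu(0)\notin\Dom(\mcA)$, and even when it is, the second-order compatibility $\mcB v_0=Q\dot u_k(0)$ need not hold. The estimate in \cref{def:BCSwellposed} concerns only classical solutions, so it cannot be applied to $\dot x_k$. Moreover, $\dot y_k$ is not even known to exist, because $x_k$ need not be $C^1$ as a $\Dom(\mcA)$-valued function.

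What you actually need is
$\dot x_k(t)=T(t)v_0+\Phi_t\Pt\dot u_k$, together with $y_k\in\Hloc{1}$ and $\Pt\dot y_k=\Psi_t v_0+\F_t\Pt\dot u_k$. This follows from time invariance. For $h>0$, the function $t\mapsto x_k(t+h)$ is the classical solution with compatible $C^2$ data $(x_k(h),u_k(\cdot+h))$. Your $C^2$ identification then gives
\[
x_k(t+h)-x_k(t)=T(t)\bigl(x_k(h)-x_0\bigr)+\Phi_t\Pt\bigl(u_k(\cdot+h)-u_k\bigr),
\]
and the analogous identity for $\Pt(y_k(\cdot+h)-y_k)$ with $\Psi_t$ and $\F_t$. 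Divide by $h$ and let $h\to 0^+$, using $x_k\in C^1$ and $u_k\in C^2$. This yields both formulas, and the $L^2$-limit of the difference quotients gives $\dot y_k$. With this inserted, the rest of your classical part goes through.

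\textbf{Uniqueness of generalised solutions.} The approximating classical solutions in \cref{def:Solutionsnonlin} have inputs that are only continuous. Your first part gives~\eqref{eq:BCSmildsol} only for classical solutions with $\Hloc{1}$ inputs, and your lifting $x-H(\gl)u$ needs $\dot u$. So ``by the first part each one satisfies~\eqref{eq:BCSmildsol}'' does not follow. This case is genuinely needed: the paper applies the proposition to classical solutions whose effective input $u+\phi_s(x)+\phi_o(y)$ is merely continuous.

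One fix is to integrate in time. For a classical solution $(x,u,y)$ one has $\mcA x=\dot x-B_iu$ continuous, so $x\in C(\zinf;\Dom(\mcA))$. First replace $\mcA$ by $\mcA-\mu$ and $(x,u,y)$ by $e^{-\mu t}(x,u,y)$, so that $0\in\rho(A)$. Then set $W(t)=\int_0^t x(s)\,ds+A\inv x(0)$ and $U_1(t)=\int_0^t u(s)\,ds$. The triple $(W,U_1,\mcC W)$ is a classical solution with $C^1$ input, $U_1(0)=0$ and $W(0)\in\Dom(A)$. Apply your first part to it, including the derivative formulas with $\mcA W(0)+B_iU_1(0)=x(0)$. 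This gives $x(t)=\dot W(t)=T(t)x(0)+\Phi_t\Pt u$ and $\Pt y=\Pt\tfrac{d}{dt}\mcC W=\Psi_t x(0)+\F_t\Pt u$, which is what your uniqueness argument needs.
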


\begin{proposition}
\label{prp:BCSNLsolutions}
Let $(\mcB ,\mcA ,\mcC ,Q,B_i)$ be a well-posed boundary node on
$(U_b,U,X,Y)$
 and let $(T,\Phi,\Psi,\F)$ be the associated well-posed linear system.
Assume that 
$\phi_s: X\to U$ is locally Lipschitz continuous and has linear growth
 and that $\phi_o:Y\to U$ is globally Lipschitz continuous.
If $(x,u,y)$ is a generalised solution of~\eqref{eq:BCS},
then 
\begin{subequations}
\label{eq:BCSNLmildsol}
\eqn{
x(t) &= T(t)x(0) + \Phi_t \Pt (u+\phi_s(x)+\phi_o(y)),  &&t\ge 0,\\
\Pt y &= \Psi_t x(0) + \F_t \Pt (u+\phi_s(x)+\phi_o(y)),  &&t\ge0.
}
\end{subequations}
 Conversely, if $x\in C(\zinf;X)$, $u\in \Lploc[2](0,\infty;U)$, and $y\in \Lploc[2](0,\infty;Y)$ satisfy~\eqref{eq:BCSNLmildsol}, then $(x,u,y)$ is a generalised solution of~\eqref{eq:BCS} on $\zinf$.
\end{proposition}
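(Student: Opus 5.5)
The proof reduces both directions to the linear result \cref{prp:WPBCSsolutions}, applied with the modified input $v:=u+\phi_s(x)+\phi_o(y)$, together with the continuity of the operators $\Phi_t$, $\Psi_t$, $\F_t$ of the well-posed linear system $(T,\Phi,\Psi,\F)$. The basic observation is that a classical solution $(x,u,y)$ of the nonlinear system~\eqref{eq:BCS} is exactly a classical solution $(x,v,y)$ of the linear system ($\phi_s=\phi_o=0$) with input $v$. Indeed, $v\in C(\zinf;U)$ because $x$, $y$ are continuous and $\phi_s$, $\phi_o$ are continuous.

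\emph{Direct implication.} First let $(x,u,y)$ be a classical solution. I would show that $(x,v,y)$ also satisfies the linear mild formulas~\eqref{eq:BCSmildsol}. The issue is that \cref{prp:WPBCSsolutions} is stated for $v\in\Hloc{1}$, whereas here $v$ is merely continuous. To handle this I would use the standard uniqueness argument for well-posed systems. For classical solutions of the linear boundary node with continuous input, the function $x(t)-T(t)x_0$ can be identified with $\Phi_t\Pt v$. To do so, approximate $v$ on $[0,\tau]$ in $L^2$ by $C^2$ functions $v_n$ with $v_n(0)=v(0)$, compare the corresponding classical solutions, and pass to the limit using the continuity of $\Phi_\tau$, $\F_\tau$. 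Alternatively, it may be cleaner to verify the formula through Laplace transforms and the transfer functions $H$, $P$. Uniqueness of classical solutions for the linear problem is the key fact behind both routes.

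\emph{Passage to generalised solutions.} For a generalised solution, take classical approximants $(x_k,u_k,y_k)$ as in the definition, each satisfying~\eqref{eq:BCSNLmildsol}. On $[0,\tau]$ we have $x_k\to x$ uniformly, so the $x_k$ stay in a bounded set. Local Lipschitz continuity of $\phi_s$ then gives $\phi_s(x_k)\to\phi_s(x)$ uniformly: a continuous image of a compact set is involved, and one can use local Lipschitz bounds on a neighbourhood of the compact set $x([0,\tau])$. Global Lipschitz continuity of $\phi_o$ gives $\phi_o(y_k)\to\phi_o(y)$ in $L^2(0,\tau;U)$. Hence $v_k\to v$ in $L^2(0,\tau;U)$, and boundedness of $\Phi_t$, $\Psi_t$, $\F_t$ lets us pass to the limit in~\eqref{eq:BCSNLmildsol}. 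The linear growth of $\phi_s$ also shows that $v\in\Lploc[2]$.

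\emph{Converse.} Suppose $(x,u,y)$ satisfies~\eqref{eq:BCSNLmildsol}. Then $v\in\Lploc[2](0,\infty;U)$, and by \cref{prp:WPBCSsolutions} the triple $(x,v,y)$ is the generalised solution of the linear system. That proposition therefore supplies classical linear solutions $(x_k,v_k,y_k)$ converging to $(x,v,y)$. Setting $u_k:=v_k-\phi_s(x_k)-\phi_o(y_k)$ makes $(x_k,u_k,y_k)$ a classical solution of the nonlinear system~\eqref{eq:BCS}, with $u_k$ continuous, and $u_k\to u$ in $L^2(0,\tau;U)$ by the same Lipschitz estimates as before.

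The main obstacle I expect is the regularity mismatch in the first step: identifying classical solutions whose input is only continuous with the mild formula. Here I would rely on uniqueness of classical solutions together with a density and continuity argument.
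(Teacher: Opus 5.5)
Apart from one step, your argument is the paper's. You pass to $v=u+\phi_s(x)+\phi_o(y)$, apply the linear \cref{prp:WPBCSsolutions} to the classical approximants, and take limits using the boundedness of $T(t)$, $\Phi_t$, $\Psi_t$, $\F_t$. For the converse you take linear classical approximants $(x_k,v_k,y_k)$ and set $u_k:=v_k-\phi_s(x_k)-\phi_o(y_k)$. Your compactness argument giving $\phi_s(x_k)\to\phi_s(x)$ uniformly on $[0,\tau]$ is a correct use of local Lipschitz continuity.

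\textbf{The gap.} It lies in the step you flag yourself: showing that a classical solution of the linear system whose input $v$ is only continuous satisfies~\eqref{eq:BCSmildsol}. The density argument you propose for it is circular.
\begin{itemize}
\item Let $x_n$ be the classical solution with $x_n(0)=x_0$ and $C^2$ input $v_n$. Then $x-x_n$ is a classical solution with zero initial state and a merely continuous input $v-v_n$.
\item To conclude $x_n(t)\to x(t)$ from $\norm{v-v_n}_{L^2(0,\tau)}\to 0$, you need the estimate~\eqref{eq:BCSWPdef} for continuous inputs. That is exactly what is in question, since the definition provides it only for $C^2$ inputs.
\item Uniqueness of \emph{classical} solutions does not help. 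You cannot know a priori that $t\mapsto T(t)x_0+\Phi_t\Pt v$ is a classical solution; for merely continuous $v$ it typically is not. So there is nothing to compare $x$ with inside the class where that uniqueness applies.
\item The Laplace-transform alternative is not worked out and runs into the same issue.
\end{itemize}

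\textbf{The fix.} A classical solution $(x_k,v_k,y_k)$ of the linear system is trivially a generalised solution: take the constant approximating sequence. \cref{prp:WPBCSsolutions} states that for $x_0\in X$ and input in $\Lploc[2](0,\infty;U)$ the generalised solution is \emph{unique} and given by~\eqref{eq:BCSmildsol}. Hence $x_k$ and $y_k$ satisfy~\eqref{eq:BCSmildsol} with input $v_k$, with no regularity issue at all. This is exactly how the paper obtains the mild formula for the approximants. It is also the same uniqueness statement you already rely on in your converse. With this replacement your proof is complete and coincides with the paper's.
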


\begin{proof}
Let $(x,u,y)$ be a generalised solution of~\eqref{eq:BCS}.
 Let $(x_k,u_k,y_k)$, $k\in \N$, be classical solutions of~\eqref{eq:BCS} 
with the properties in \cref{def:Solutionsnonlin}.
Our assumptions on $\phi_s$ and $\phi_o$ imply that if we define $u_0:=u+\phi_s(x)+\phi_o(y)$ 
 and $u_{0k}:=u_k+\phi_s(x_k)+\phi_o(y_k)$, $k\in \N$, then $u_0,u_{0k}\in \Lploc[2](0,\infty;U)$ for all $k\in \N$ and
 $\norm{\Pt(u_{0k}-u_0)}_{L^2(0,t)}\to 0$ as $k\to \infty$ for all $t\ge0$.
\cref{prp:WPBCSsolutions} implies that
$(x_k(t), \Pt y_k)\tp = \Sigma_t(x_k(0), \Pt u_{0k})\tp$
for all $t\ge 0$ and $k\in\N$, where
\eq{
\Sigma_t =\pmat{T(t)& \Phi_t\\ \Psi_t& \F_t}, \qquad t\ge 0.
}
For every $t\ge0$
the properties $\norm{x_k(0)- x(0)}\to 0$ and
 $\norm{\Pt(u_{0k}-u_0)}_{L^2(0,t)}\to 0$ as $k\to \infty$ imply that
\eq{
\pmat{x(t)\\ \Pt y} 
= \lim_{k\to \infty} \pmat{x_k(t)\\ \Pt y_k}
=   \Sigma_t \pmat{x(0)\\ \Pt u_0}
=   \Sigma_t \pmat{x(0)\\ \Pt (u+\phi_s(x)+\phi_o(y))},
}
where the limit is taken in $X\times L^2(0,t;Y)$.
Thus~\eqref{eq:BCSNLmildsol} hold.

Now let $x\in C(\zinf;X)$, $u\in \Lploc[2](0,\infty;U)$, and $y\in \Lploc[2](0,\infty;Y)$ be such that~\eqref{eq:BCSNLmildsol} hold.
Our assumptions on $\phi_s$ and $\phi_o$ imply that  $u_0:=u+\phi_s(x)+\phi_o(y)\in \Lploc[2](0,\infty;U)$. Let $(x_{0k})_{k\in\N}\subset \Dom(A)$ and $(u_{0k})_{k\in \N}\subset \Hloc{1}(0,\infty;U)$ be such that $u_{0k}(0)=0$, $k\in\N$, and $x_{0k}\to x(0)$ and $\norm{\Pt (u_{0k}-u)}_{\Lp[2]}\to 0$ as $k\to \infty$ for all $t>0$.
If we define $x_k:\zinf\to X$ and $y_k:\zinf\to Y $ by
$(x_k(t), \Pt y_k)\tp = \Sigma_t(x_{0k}, \Pt u_{0k})\tp$
for all $t\ge 0$ and $k\in\N$, then \cref{prp:WPBCSsolutions} implies that $x_k\in C^1(\zinf;X)$ and $y_k\in \Hloc{1}(0,\infty;Y)$.
 If we define $u_k:= u_{0k}-\phi_s(x_k)-\phi_o(y_k)$, then $u_k\in \Hloc{1}(0,\infty;U)$ due to our assumptions on $\phi_s$ and $\phi_o$, and \cref{prp:WPBCSsolutions} further implies that
 for all $t\ge 0$ we have $x_k(t)\in \Dom(\AA)$ and
\eq{
\dot x_k(t) 
&= \mcA x_k(t) + B_iu_{0k}(t)
 \\
\mcB x_k(t) &=  Q u_{0k}(t) 
\\
y_k(t) &= \mcC x_k(t).
}
Since $u_{0k}(t)= u_k(t)+\phi_s(x_k(t))+\phi_o(y_k(t)) $, $(x_k,u_k,y_k)$ is a classical solution of~\eqref{eq:BCS} for every $k\in \N$.
Moreover, for every $t>0$ 
we have
\eq{
\MoveEqLeft \norm{\Pt (x_k-x)}_{\infty}^2
+ \norm{\Pt (y_k-y)}_{L^2}^2(0,t)\\
&\le \norm{\Sigma_t} \left( \norm{x_{k0}-x(0)}_X^2 + \norm{\Pt (u_{0k}-u_0)}_{L^2(0,t)}^2  \right)\to 0
}
as $k\to \infty$ and
further
\eq{
\MoveEqLeft[1]\norm{\Pt (u_k-u)}_{L^2(0,t)}\\
&= 
\norm{\Pt (u_{0k}-u_0)
+\Pt (\phi_s(x_k)-\phi_s(x))
+\Pt (\phi_o(y_k)-\phi_o(y))}_{L^2(0,t)}\to 0
}
as $k\to \infty$ due to our assumptions on $\phi_s$ and $\phi_o$. This shows that $(x,u,y)$ is a generalised solution of~\eqref{eq:BCS} and completes the proof.
\end{proof}

The next lemma is useful in proving well-posedness of a boundary node.

\begin{lemma}[{\citel{FkiPau26arxiv}{Lem.~2.8}}]
\label{lem:IPBCSWP}
Let 
 $(\mcB,\mcA,\mcC,I,0)$ be a boundary node on the Hilbert spaces $\BNsp{U}{U}{X}{U}$ with transfer function $P$ and
 let $K\in \Lin(U)$ be such that $\re K\ge cI$ for some $c>0$.
If 
$\re \iprod{\mcA x}{x}\le \re \iprod{\mcB x}{\mcC x}$ for $ x\in \Dom(\mcA)$,
then $A$ generates a contraction semigroup on $X$ and the following hold.
\begin{itemize}
\setlength{\itemsep}{1ex}
 \item[\textup{(a)}] 
$(\mcB,\mcA,\mcC,I,0)$ is well-posed if and only if 
 $\sup_{s\in\R}\norm{P(\gw+is)}<\infty$ for some $\gw>0$.
 \item[\textup{(b)}] 
 $(\mcB+K\mcC,\mcA,\mcC,I,0)$ is a well-posed boundary node.
 \item[\textup{(c)}]  $(I+KP(\cdot))\inv\in H_\infty(\C_0^+; \Lin(U))$.
\end{itemize}
\end{lemma}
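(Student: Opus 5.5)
Throughout, write $P_K$ for the transfer function of $(\mcB+K\mcC,\mcA,\mcC,I,0)$. The contraction property follows from the dissipativity inequality: for $x\in\Dom(A)=\ker(\mcB)$ we get $\re\iprod{\mcA x}{x}\le 0$, so $A$ is dissipative. Since $A$ generates a $C_0$-semigroup by \cref{def:Bnode}(ii), $\gl-A$ is surjective for large real $\gl$. The Lumer--Phillips theorem then shows that $A$ generates a contraction semigroup.

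\textbf{Part (a).} Necessity is standard: a well-posed system has transfer function bounded on some right half-plane $\C_\gw^+$, obtained as the Laplace transform of $\F$. For sufficiency I would use the energy identity. For a classical solution with $Q=I$, $B_i=0$,
\[
\tfrac{d}{dt}\norm{x}^2=2\re\iprod{\mcA x}{x}\le 2\re\iprod{u}{y}.
\]
This alone does not bound $\int\norm{y}^2$, so I would add a dissipative feedback first. Set $u=v-\mcC x$, i.e.\ consider the boundary node $(\mcB+\mcC,\mcA,\mcC,I,0)$. For it,
\[
\tfrac{d}{dt}\norm{x}^2\le 2\re\iprod{v}{y}-2\norm{y}^2\le \tfrac12\norm{v}^2-\norm{y}^2 ,
\]
where the last step uses Young's inequality $2\re\iprod{v}{y}\le \tfrac12\norm{v}^2+2\norm{y}^2$ after absorbing; any constant split works. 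Integrating gives the estimate \eqref{eq:BCSWPdef} for the feedback system, tested on the class in \cref{rem:WPtestclass}. This requires checking that $(\mcB+\mcC,\mcA,\mcC,I,0)$ is a boundary node. Its restriction is dissipative by the same computation. The right inverse can be obtained as $\mcB^r(I+P(\gl))^{-1}$-type, using a resolvent point of the dissipative restriction and the fact that $\re P(\gl)\ge 0$ for real $\gl>0$, which follows from the inequality applied to $H(\gl)u$.

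Part (b) is this argument with $K$ in place of $I$. The inequality becomes $-2\re\iprod{K y}{y}\le -2c\norm{y}^2$, and this is where $c>0$ enters.

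Part (c) comes from the identity
\[
P_K(\gl)=P(\gl)(I+KP(\gl))^{-1}.
\]
To see that $I+KP(\gl)$ is invertible on $\C_0^+$, note that $\re\iprod{P(\gl)u}{u}\ge \re\gl\,\norm{H(\gl)u}^2\ge0$. This follows from the dissipativity inequality applied to $x=H(\gl)u$, using $\mcA x=\gl x$, $\mcB x=u$, $\mcC x=P(\gl)u$. Together with $\re K\ge cI$, a standard accretivity argument shows that $I+KP(\gl)$ is invertible. It is cleaner to write $I+KP=K(K^{-1}+P)$ and note that $\re K^{-1}\ge c\norm{K}^{-2}I$, so $K^{-1}+P(\gl)$ is uniformly accretive. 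Hence
\[
\norm{(K^{-1}+P(\gl))^{-1}}\le \norm{K}^2/c ,
\]
uniformly on $\C_0^+$. This gives (c), together with analyticity.

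Returning to sufficiency in (a): assume $P$ is bounded on $\C_\gw^+$. The original node is the feedback system (b) with $K=I$, closed with the feedback $v=u+y$. Since $P=P_K(I-KP_K)^{-1}$ and $P$ is bounded, the inverse $(I-P_K)^{-1}=I+P$ is bounded on $\C_\gw^+$. So the feedback $I$ is admissible for the well-posed system from (b), in the Weiss sense, and well-posed feedback preserves well-posedness.

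The main obstacle is identifying the closed-loop system with the original boundary node rigorously. This means verifying that classical solutions correspond under the feedback and that the boundary-node axioms hold, especially the right inverse and the domain bookkeeping.
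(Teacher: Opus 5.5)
The paper only quotes this lemma without proof, so I am checking your argument on its own terms. The architecture is sound. Your proof of (c) via $\re\iprod{P(\gl)u}{u}\ge \re\gl\,\norm{H(\gl)u}^2$ and $\re K^{-1}\ge c\norm{K}^{-2}I$ is complete.

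The step you call the main obstacle is exactly \cref{prp:BCSfeedback}. Applied to the well-posed node $(\mcB+\mcC,\mcA,\mcC,I,0)$, whose transfer function is $P_I=P(I+P)^{-1}$, with feedback operator $I$, it returns $(\mcB,\mcA,\mcC,I,0)$ as a well-posed boundary node. This holds provided $(I-P_I)^{-1}=I+P\in H_\infty(\C_\gb^+;\Lin(U))$ for some $\gb$.

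That proviso is where the sufficiency half of (a) has a genuine gap. The hypothesis only bounds $P$ on the vertical line $\gw+i\R$, and you silently upgrade it to a bound on $\C_\gw^+$. Analyticity plus boundedness on one line does not give boundedness on the half-plane in general: $e^{\gl}$ is bounded on $\gw+i\R$ but not on $\C_\gw^+$. So you need a growth bound, and here one is available. Combine $P(\gl)=P(\mu)+(\mu-\gl)\mcC(\gl-A)^{-1}H(\mu)$ with $\norm{\mcC(\gl-A)^{-1}}\lesssim \norm{(\gl-A)^{-1}}+\norm{A(\gl-A)^{-1}}\le \frac{1}{\re\gl}+1+\frac{\abs{\gl}}{\re\gl}$, which uses the contraction property. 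This gives $\norm{P(\gl)}\lesssim (1+\abs{\gl})^2$ on $\C_\gw^+$. Now apply the maximum principle to $\gl\mapsto\iprod{P(\gl)u}{v}\,(1+\epsilon(\gl-\gw))^{-3}$ and let $\epsilon\to0$ (Phragm\'en--Lindel\"of). This yields $\sup_{\gl\in\C_\gw^+}\norm{P(\gl)}\le\sup_{s\in\R}\norm{P(\gw+is)}$, which is what \cref{prp:BCSfeedback} needs.

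In (b), the boundary-node axioms for $(\mcB+K\mcC,\mcA,\mcC,I,0)$ are not actually established. Dissipativity of $A_K=\mcA\vert_{\ker(\mcB+K\mcC)}$ does not give generation without the range condition. Also, $\mcB^r(I+P(\gl))^{-1}$ is not a right inverse, since $(\mcB+\mcC)\mcB^r(I+P(\gl))^{-1}=(I+\mcC\mcB^r)(I+P(\gl))^{-1}$. Both points follow from invertibility of $I+KP(\gl)$ on $\C_0^+$. So prove the estimate of (c) first; it uses only accretivity of $P$, so there is no circularity.

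With that in hand, $H(\gl)(I+KP(\gl))^{-1}\in\Lin(U,\Dom(\mcA))$ is a right inverse of $\mcB+K\mcC$. For the range condition, take $f\in X$ and set $x=(\gl-A)^{-1}f+H(\gl)w$ with $w=-(I+KP(\gl))^{-1}K\mcC(\gl-A)^{-1}f$. Then $(\gl-\mcA)x=f$ and $(\mcB+K\mcC)x=0$, so $\gl-A_K$ is onto and Lumer--Phillips applies.

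Finally, your Young split is off. With $2\re\iprod{v}{y}\le\frac12\norm{v}^2+2\norm{y}^2$ the $\norm{y}^2$ terms cancel exactly and no output bound survives. Use $2\re\iprod{v}{y}\le c^{-1}\norm{v}^2+c\norm{y}^2$ instead, which gives $\norm{x(\tau)}^2+c\int_0^\tau\norm{y}^2dt\le\norm{x_0}^2+c^{-1}\int_0^\tau\norm{v}^2dt$.
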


\subsection{Feedback and Cascades of Boundary Nodes}

In this section we investigate the well-posedness of boundary nodes
under output feedback of the form $u(t)=Ky(t)$ and in linear and nonlinear cascade connections.

\begin{proposition}[{\citel{FkiPau26arxiv}{Prop.~2.9}}]
\label{prp:BCSfeedback}
Assume that $(\mcB,\mcA,\mcC,Q,B_i)$ is a well-posed boundary node on the Hilbert spaces $\BNsp{U_b}{U}{X}{Y}$ with transfer function $P$. If $K\in \Lin(Y,U)$ and $(I-KP(\cdot ))\inv \in H_\infty(\C_{\gb}^+;\Lin(U))$ for some $\gb\in \R$, then $(\mcB-QK\mcC,\mcA+B_iK\mcC,\mcC,Q,B_i)$ is a well-posed boundary node on $(U_b,X,Y,U)$.
\end{proposition}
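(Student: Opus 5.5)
We prove \cref{prp:BCSfeedback} in three steps. First we check that the feedback tuple is a boundary node. Then we identify its classical solutions with those of the original node under the feedback law $u=K y+v$. Finally we obtain the well-posedness estimate from the well-posed linear system $(T,\Phi,\Psi,\F)$ by the standard closed-loop formula for static output feedback.

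\emph{Boundary node structure.} Denote $\tilde\mcB=\mcB-QK\mcC$ and $\tilde\mcA=\mcA+B_iK\mcC$. Both are bounded on $\Dom(\mcA)$ with the graph norm of $\mcA$; since $B_iK\mcC$ is $\mcA$-bounded, the graph norms of $\tilde\mcA$ and $\mcA$ are comparable only up to this perturbation, so we keep using $\Dom(\mcA)$ with the $\mcA$-graph norm. For a right inverse, take $\lambda\in\rho(A)$ with $\re\lambda>\gb$ large and use $H_b(\lambda)$ and $H(\lambda)$. For $w\in U_b$ we seek $x=H_b(\lambda)w+H(\lambda)z$ with $\tilde\mcB x=w$. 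This requires $-QK P_b(\lambda)w+Qz-QKP(\lambda)z=0$, and it suffices to take
\[
z=(I-KP(\lambda))\inv K P_b(\lambda) w .
\]
The map $w\mapsto x$ is bounded into $\Dom(\mcA)$, so $\tilde\mcB$ has a bounded right inverse. Condition (ii), that $\tilde A=\tilde\mcA|_{\ker\tilde\mcB}$ is a generator, will come out of the next step.

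\emph{Closed-loop semigroup and maps.} Formally, the solutions of the feedback node with input $v$ are the solutions of \eqref{eq:BCS} (with $\phi_s=\phi_o=0$) with input $u=K y+v$. By \cref{prp:WPBCSsolutions} this gives the fixed-point equations
\[
\Pt y=\Psi_t x_0+\F_t\Pt(K y+v),\qquad x(t)=T(t)x_0+\Phi_t\Pt(K y+v).
\]
We therefore need $I-\F_t K$ to be invertible on $L^2(0,t;Y)$ with a uniform bound. The input-output map $\F$ is the shift-invariant operator with transfer function $P$. Hence on $L^2_\gw(0,\infty)$, for $\gw>\max(\gb,\gw_0(T))$, the operator $I-K\F$ corresponds to the multiplier $I-KP$, which is invertible in $H_\infty$ by assumption. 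Causality then gives invertibility of $I-K\F_t$ for every $t$, uniformly. This is exactly the admissible-feedback criterion of Weiss / \cite{TucWei14}, and \cite{Sta05book} shows that the resulting closed-loop quadruple $(T^K,\Phi^K,\Psi^K,\F^K)$ is a well-posed linear system. The hardest step is to identify its generator with $\tilde A$ and its classical solutions with those of the new node. We plan to do this via resolvents: for large real $\lambda$ the closed-loop resolvent is
\[
(\lambda-A)\inv+H(\lambda)(I-KP(\lambda))\inv K\mcC(\lambda-A)\inv .
\]
A direct computation shows that this operator maps into $\ker\tilde\mcB$ and inverts $\lambda-\tilde\mcA$ there. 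This identifies the generator of $T^K$ as $\tilde A$ and gives condition (ii).

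\emph{Estimate.} Take $x_0\in\Dom(\tilde A)$ and $v\in C_\ell^2$, as allowed by \cref{rem:WPtestclass}. Let $(x,v,y)$ be the classical solution of the new node, which exists by \cite{MalSta06}. Then $(x,Ky+v,y)$ is a classical solution of the original node, so the fixed-point equations above hold. Solving them with the uniformly bounded $(I-\F_tK)\inv$ yields \eqref{eq:BCSWPdef} for the new node. This completes the argument.
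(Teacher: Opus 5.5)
The paper does not prove this proposition; it imports it from \cite{FkiPau26arxiv}. Your argument is a correct proof along what is presumably the standard route. The steps are: admissibility of $K$ for the associated well-posed system $(T,\Phi,\Psi,\F)$, obtained from the $H_\infty$ hypothesis and causality; Weiss's closed-loop well-posed system; identification of its generator with $\tilde A=\tilde\mcA\vert_{\ker(\tilde\mcB)}$ through $(\gl-A)\inv+H(\gl)(I-KP(\gl))\inv K\mcC(\gl-A)\inv$, which I checked does map into $\ker(\tilde\mcB)$ and is a right inverse of $\gl-\tilde\mcA$ there; and the estimate via $(I-\F_tK)\inv$. Your right-inverse construction for $\tilde\mcB$ is also correct.

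Three routine points should be made explicit.

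\emph{Injectivity of $\gl-\tilde A$.} If $\tilde\mcB x=0$ and $(\gl-\tilde\mcA)x=0$, then $x=H(\gl)K\mcC x$, hence $(I-KP(\gl))K\mcC x=0$ and $x=0$. Without this you only get $A^K\subseteq\tilde A$, not equality.

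\emph{The closed-loop resolvent formula.} This comes from Laplace-transforming the closed-loop equations, using $(\gl-A_{-1})\inv B=H(\gl)$ for the associated system.

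\emph{Mild formulas for the closed-loop input.} A classical solution whose input $Ky+v$ is only known to be continuous still satisfies the mild formulas. \cref{prp:WPBCSsolutions} is stated only for $\Hloc{1}$ inputs, so you need the standard $X_{-1}$ variation-of-constants argument here.
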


The following result concerns the well-posedness of a linear cascade connection.

\begin{proposition}[{\citel{FkiPau26arxiv}{Prop.~2.10 \& 2.11}}]
\label{prp:CascWP}
Let $(\mcB_1,\mcA_1,\mcC_1,Q_1, B_{i1})$ be a well-posed boundary node on $\BNsp{U_{b1}}{U_1}{X_1}{Y_1}$
and let $(\mcB_2,\mcA_2,\mcC_2,Q_2, B_{i2})$ be a well-posed boundary node on $\BNsp{U_{b2}}{U_2}{X_2}{Y_2}$.
 If $K\in \Lin(Y_2,U_1)$, then 
\eq{
\left( \pmat{\mcB_1 & -Q_1K\mcC_2 \\ 0 & \mcB_2} 
, \pmat{\mcA_1 & B_{i1} K\mcC_2 \\ 0 & \mcA_2}, \pmat{\mcC_1 & 0 \\ 0 & \mcC_2},  \pmat{Q_1 & 0 \\ 0 & Q_2}, \pmat{B_{i1} & 0 \\ 0 & B_{i2}}\right)
}
is a well-posed boundary node on $\BNsp{U_{b1}\times U_{b2}}{U_1\times U_2}{X_1\times X_2}{Y_1\times Y_2}$.
Moreover, $\gw_0(T)=\max \set{\gw_0(T_1), \gw_0(T_2)}$,
 where
$T_1$ and $T_2$ are the semigroups generated by $\mcA_1\vert_{\ker(\mcB_1)}$ and $\mcA_2\vert_{\ker(\mcB_2)}$, respectively.
\end{proposition}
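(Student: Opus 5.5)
The plan is to verify the three conditions of \cref{def:Bnode} for the cascade, then to obtain the semigroup explicitly from the well-posed linear systems $(T_j,\Phi^j,\Psi^j,\F^j)$, $j=1,2$, associated with the two nodes. Denote the block operators by $\mcB,\mcA,\mcC,Q,B_i$.

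\textbf{Condition (i)} is immediate, since $K\mcC_2\in\Lin(\Dom(\mcA_2),U_1)$.

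\textbf{Condition (iii).} The block $\mcB$ is upper triangular with right-invertible diagonal blocks. I take
\[
\mcB^r=\pmat{\mcB_1^r & \mcB_1^rQ_1K\mcC_2\mcB_2^r\\ 0 & \mcB_2^r}.
\]
A direct multiplication gives $\mcB\mcB^r=I$. This operator is bounded into $\Dom(\mcA_1)\times\Dom(\mcA_2)$ because $\mcC_2\mcB_2^r\in\Lin(U_{b2},Y_2)$.

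\textbf{Condition (ii).} This is the core step. I first identify $\ker(\mcB)$: it consists of the pairs $(x_1,x_2)$ with $\mcB_2x_2=0$ and $\mcB_1x_1=Q_1K\mcC_2x_2$. As the candidate semigroup I define
\[
T(t)\pmat{x_1\\x_2}=\pmat{T_1(t)x_1+\Phi^1_t\Pt KΨ^2_t x_2\\ T_2(t)x_2},
\]
where I write $\Psi^2_t$ for the output map of the second node. This is bounded on $X_1\times X_2$ because $\Phi^1_t$ and $\Psi^2_t$ are bounded.

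The semigroup property follows from the composition identities of well-posed linear systems:
\begin{itemize}
\item $\Phi_{t+s}$ splits as $T_1(t)\Phi_s$ plus $\Phi_t$ applied to the shifted input;
\item $\Psi_{t+s}$ is the concatenation of $\Psi_s$ and $\Psi_tT_2(s)$.
\end{itemize}
Strong continuity follows from that of $T_1$ and $T_2$ together with $\norm{\Phi^1_t}\to0$ on inputs restricted to $[0,t]$.

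To identify the generator with $A=\mcA\vert_{\ker(\mcB)}$, I take $(x_1,x_2)\in\ker(\mcB)$ with additionally $x_2\in\Dom(A_2^2)$, so that $y_2\in\Hloc{1}$. By \cref{prp:WPBCSsolutions} applied to node 1 with input $Ky_2$, whose compatibility condition is $\mcB_1x_1=Q_1Ky_2(0)$, the first component of $T(\cdot)(x_1,x_2)$ is a classical solution. Hence the generator agrees with $\mcA$ on a dense subspace. A resolvent argument then upgrades this to full equality: for large $\gl$, solve $(\gl-\mcA)x=f$ in $\ker(\mcB)$ by first solving for $x_2=(\gl-A_2)\inv f_2$ and then using $H_1(\gl)$ to handle the boundary term. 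This gives $\gl\in\rho(A)$, and since $A$ and the generator agree on a core they must coincide.

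\textbf{Well-posedness.} By \cref{rem:WPtestclass}, it suffices to take $x_0\in\Dom(A)$ and $u=(u_1,u_2)\in C^2_\ell$. Then $(x_2,u_2,y_2)$ is a classical solution of node 2, and $(x_1,u_1+Ky_2,y_1)$ is a classical solution of node 1. Applying \eqref{eq:BCSWPdef} for node 2 bounds $\norm{x_2(\tau)}^2+\norm{y_2}^2_{L^2}$. Then applying it for node 1 with input $u_1+Ky_2$, and using $\norm{Ky_2}_{L^2}\le\norm{K}\norm{y_2}_{L^2}$, gives the joint estimate.

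\textbf{Growth bound.} The inequality $\ge$ holds because $X_1\times\{0\}$ is $T$-invariant with restriction $T_1$, and the second component evolves under $T_2$. For $\le$, fix $\gw>\max\set{\gw_0(T_1),\gw_0(T_2)}$ and pass to the exponentially shifted systems $e^{-\gw t}$. These are well-posed and exponentially stable, so $\sup_t\norm{\Phi^{1,\gw}_t}<\infty$ and $\Psi^{2,\gw}_\infty\in\Lin(X_2,L^2(0,\infty))$. Hence the off-diagonal term of $e^{-\gw t}T(t)$ is uniformly bounded, giving $\norm{T(t)}\lesssim e^{\gw t}$.

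I expect the main obstacle to be the generator identification in condition (ii), in particular the density and regularity bookkeeping.
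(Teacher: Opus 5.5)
Your proof is correct, but it takes a different route from the one the paper relies on. The paper does not prove this proposition; it quotes it from \cite{FkiPau26arxiv}. Its treatment of analogous interconnections shows the intended argument.

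For the boundary-node property and well-posedness, view the cascade as the block-diagonal node $(\diag(\mcB_1,\mcB_2),\diag(\mcA_1,\mcA_2),\diag(\mcC_1,\mcC_2),\diag(Q_1,Q_2),\diag(B_{i1},B_{i2}))$ closed with the nilpotent output feedback $\tilde K=\pmatsmall{0&K\\0&0}$. Since $I-\tilde KP(\gl)=\pmatsmall{I&-KP_2(\gl)\\0&I}$ has the bounded inverse $\pmatsmall{I&KP_2(\gl)\\0&I}$ on right half-planes, \cref{prp:BCSfeedback} gives everything at once. The paper uses the same device for $(\BB_r,\AA_r,\CC_r,Q_r,B_{ri})$ in the proof of \cref{thm:ADRCParab}.

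For the growth bound, the paper explicitly refers to the triangular resolvent $(\gl-A)\inv=\pmatsmall{R_1(\gl)&H_1(\gl)K\mcC_2R_2(\gl)\\0&R_2(\gl)}$, where $R_j(\gl)=(\gl-A_j)\inv$. Its entries are uniformly bounded on $\C_\gw^+$ for $\gw>\max\set{\gw_0(T_1),\gw_0(T_2)}$, and the Gearhart--Pr\"uss--Greiner theorem finishes the argument.

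Your time-domain argument instead builds $T$ explicitly from $(T_j,\Phi^j,\Psi^j,\F^j)$ and bounds it through the exponentially shifted systems. It is self-contained, yields explicit solution formulas, and its growth-bound step does not need Hilbert-space structure. The feedback/resolvent route is shorter once \cref{prp:BCSfeedback} is available. It is also the one that extends to couplings controlled only by a resolvent estimate rather than a well-posed output map, which is exactly what \cref{thm:ADRCParab} needs.

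The step you flag as the main obstacle is easier than you make it. By \cref{prp:WPBCSsolutions} with $u_2=0$, already $x_2\in\Dom(A_2)$ gives $y_2=\mcC_2T_2(\cdot)x_2\in\Hloc{1}(0,\infty;Y_2)$. Hence every $x\in\ker(\mcB)$ generates a classical trajectory, so $A\subset G$, where $G$ is the generator of your $T$. Now take $\gl\in\rho(A_1)\cap\rho(A_2)\cap\rho(G)$ and $x\in\Dom(G)$. Your $H_1(\gl)$-construction gives $z\in\Dom(A)$ with $(\gl-A)z=(\gl-G)x$, so injectivity of $\gl-G$ yields $x=z\in\Dom(A)$. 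Neither $\Dom(A_2^2)$ nor a core argument is needed. As written, ``agree on a core'' would still require proving that your subspace is a core, e.g.\ via its $T$-invariance.

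Two further small corrections:
\begin{itemize}
\item $\norm{\Phi^1_t}$ need not tend to $0$ as $t\to0$; a transport equation is a counterexample. Strong continuity should use $\sup_{0<t\le1}\norm{\Phi^1_t}<\infty$ together with $\norm{\Psi^2_tx_2}_{L^2(0,t)}\to0$.
\item In the well-posedness step, the input $u_1+Ky_2$ of the first node is only $\Hloc{1}$, not $C^2$. Its estimate should therefore come from the mild-solution representation in \cref{prp:WPBCSsolutions}, not directly from the $C^2$ class in \cref{def:BCSwellposed}.
\end{itemize}
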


The following proposition provides conditions for the existence of solutions of a  nonlinear cascade connection of two boundary nodes.

\begin{proposition}
\label{prp:BNcascadeNL}
Let $(\mcB_1,\mcA_1,\mcC_1,Q_1, B_{i1})$ be a well-posed boundary node on $\BNsp{U_{b1}}{U_1}{X_1}{Y_1}$
 and let $(\mcB_2,\mcA_2,\mcC_2,Q_2, B_{i2})$ be a well-posed boundary node on
 $(U_{b2},U_2,X_2,Y_2)$. 
Assume further $\phi_s: X_2\to U_1$ is locally Lipschitz continuous and has linear growth, that $\phi_o:Y_2\to U_1$ is globally Lipschitz continuous, and $K\in \Lin(Y_2,Y_1)$.
Then for every $x_0=(x_{10},x_{20})\tp\in X_1\times X_2$ and $u=(u_1,u_2)\tp\in \Lploc[2](0,\infty;U_1\times U_2)$ the system%
\begin{subequations}%
\label{eq:CascSysNonlin}%
\eqn{%
\ddb{t} \pmat{x_1(t)\\x_2(t)} &=    \pmat{\mcA_1 x_1(t) + B_{i1}(u_1(t)+\phi_s(x_2(t))+\phi_o(y_2(t)))\\\mcA_2 x_2(t)+ B_{i2}u_2(t)}\\
\pmat{\mcB_1 x_1(t)  \\  \mcB_2 x_2(t)}
&= \pmat{Q_1(u_1(t)+\phi_s(x_2(t))+\phi_o(y_2(t)))\\ Q_2u_2(t)}
\\
\pmat{y_1(t)\\ y_2(t)} &= \pmat{\CC_1 x_1(t) +K\CC_2 x_2(t)\\ \CC_2x_2(t)}
}
\end{subequations}
with state $x=(x_1,x_2)\tp$ and output $y=(y_1,y_2)\tp$
has a unique generalised solution $( x,u,y)$ satisfying $x(0)=x_0$.
 If 
$x_0=(x_{10},x_{20})\tp\in \Dom(\AA_1)\times \Dom(\AA_2)$ and $u=(u_1,u_2)\tp\in \Hloc{1}(0,\infty;U_1\times U_2)$ are such that 
$\mcB_1 x_{10}=Q_1(u_1(0)+\phi_s(x_{20})+\phi_o(\CC_2x_{20})) $
and   $\mcB_2 x_{20}=Q_2 u_2(0) $, 
then $( x,u,y)$ is a classical solution of the system.
For every $\tau >0$ and $R>0$ there exists $M_{\tau,R}>0$ such that any pair of generalised solutions $(x,u,y)$ and $(x',u',y')$ of~\eqref{eq:CascSysNonlin} satisfy
\eq{
\norm{\Pt[\tau](x-x')}_\infty + 
\norm{\Pt[\tau](y-y')}_{L^2} \le 
M_{\tau,R}\left(
\norm{x(0)-x'(0)} + 
\norm{\Pt[\tau](u-u')}_{L^2} 
  \right) 
}
whenever 
$\norm{x(0)}\le R$, 
$\norm{x'(0)}\le R$, 
$\norm{\Pt[\tau]u}\le R$, 
and $\norm{\Pt[\tau]u'}\le R$.
If $\phi_s$ is globally Lipschitz, $M_{\tau,R}$ can be chosen independently of $R>0$.
\end{proposition}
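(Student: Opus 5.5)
The cascade is triangular: the second subsystem does not depend on the first. I will solve it first and then feed its state and output into the first subsystem as a known input.

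\textbf{Step 1: the second subsystem.} For $x_{20}\in X_2$ and $u_2\in \Lploc[2](0,\infty;U_2)$, \cref{prp:WPBCSsolutions} gives a unique generalised solution $(x_2,u_2,y_2)$ defined by $x_2(t)=T_2(t)x_{20}+\Phi^2_t\Pt u_2$ and $\Pt y_2=\Psi^2_t x_{20}+\F^2_t\Pt u_2$. Linearity and boundedness of $\Sigma^2_\tau$ give
\[
\norm{\Pt[\tau](x_2-x_2')}_\infty+\norm{\Pt[\tau](y_2-y_2')}_{L^2}\lesssim \norm{x_{20}-x_{20}'}+\norm{\Pt[\tau](u_2-u_2')}_{L^2}.
\]
If $x_{20}\in\Dom(\AA_2)$, $u_2\in\Hloc{1}$ and $\mcB_2x_{20}=Q_2u_2(0)$, the solution is classical with $x_2\in C^1$ and $y_2\in \Hloc{1}$.

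\textbf{Step 2: the first subsystem.} Define $v:=u_1+\phi_s(x_2)+\phi_o(y_2)$. Since $x_2$ is continuous and $\phi_s$ is locally Lipschitz with linear growth, $\phi_s(x_2)$ is continuous. Since $\phi_o$ is globally Lipschitz, $\phi_o(y_2)\in\Lploc[2]$. Hence $v\in \Lploc[2](0,\infty;U_1)$. Applying \cref{prp:WPBCSsolutions} to node 1 with input $v$ gives $(x_1,v,\tilde y_1)$, and we set $y_1:=\tilde y_1+Ky_2$.

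Existence of a generalised solution of the full system then follows as in the converse part of \cref{prp:BCSNLsolutions}. Approximate $x_{20}$ and $u_2$ by smooth compatible data, approximate $x_{10}$ and $v$ by compatible data in $\Dom(\AA_1)\times \Hloc{1}$, and define the approximating $u_{1k}:=v_k-\phi_s(x_{2k})-\phi_o(y_{2k})$. This yields classical solutions converging in the required topologies. Uniqueness holds because any generalised solution satisfies the mild formulas, as in the first part of the proof of \cref{prp:BCSNLsolutions}, and these formulas determine $x_2,y_2$ and then $x_1,y_1$ uniquely.

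For the classical case, $x_2\in C^1(\zinf;X_2)$ and $y_2\in\Hloc{1}$. Then $\phi_s(x_2)$ is locally Lipschitz in $t$, hence in $\Hloc{1}$, and $\phi_o(y_2)\in\Hloc{1}$ since Lipschitz maps preserve $H^1$ in one dimension. So $v\in\Hloc{1}$, the compatibility condition $\mcB_1x_{10}=Q_1v(0)$ is exactly the stated one, and \cref{prp:WPBCSsolutions} gives a classical solution of node 1.

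\textbf{Step 3: the incremental estimate.} This is where the care lies.
\[
\norm{\Pt[\tau](x_1-x_1')}_\infty+\norm{\Pt[\tau](\tilde y_1-\tilde y_1')}_{L^2}\lesssim \norm{x_{10}-x_{10}'}+\norm{\Pt[\tau](v-v')}_{L^2},
\]
and
\[
\norm{\Pt[\tau](v-v')}_{L^2}\le \norm{\Pt[\tau](u_1-u_1')}+\norm{\Pt[\tau](\phi_s(x_2)-\phi_s(x_2'))}+L_o\norm{\Pt[\tau](y_2-y_2')}.
\]
The $\phi_s$ term needs a Lipschitz constant on a bounded set. By Step 1, $\norm{\Pt[\tau]x_2}_\infty\le \norm{\Sigma^2_\tau}(R+R)=:R'$, and the same bound holds for $x_2'$. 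So with $L_{R'}$ the Lipschitz constant of $\phi_s$ on the ball of radius $R'$,
\[
\norm{\Pt[\tau](\phi_s(x_2)-\phi_s(x_2'))}_{L^2}\le \sqrt\tau\, L_{R'}\norm{\Pt[\tau](x_2-x_2')}_\infty.
\]
Combining this with Step 1 and the term $K(y_2-y_2')$ gives the claimed bound with a constant $M_{\tau,R}$. If $\phi_s$ is globally Lipschitz, $L_{R'}$ does not depend on $R$, so neither does $M_{\tau,R}$.
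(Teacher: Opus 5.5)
Your proposal is correct and follows essentially the same route as the paper: you solve the second node first, feed $u_1+\phi_s(x_2)+\phi_o(y_2)$ into the first node, build approximating classical solutions from compatible data, and get the incremental estimate from the a priori bound on $\norm{\Pt[\tau]x_2}_\infty$ combined with the Lipschitz constant of $\phi_s$ on the corresponding ball. The differences are minor. You approximate $v$ directly and define $u_{1k}$ by subtraction, whereas the paper approximates $u_1$. You also obtain uniqueness and the estimate directly from the mild formulas for generalised solutions, whereas the paper proves the estimate for classical solutions, extends it to generalised ones, and deduces uniqueness from it; your version is slightly cleaner.
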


\begin{proof}
 Let $(T_1,\Phi^1,\Psi^1,\F^1)$
and 
 $(T_2,\Phi^2,\Psi^2,\F^2)$ be
 the well-posed linear systems associated with the boundary nodes
 $(\BB_1,\AA_1,\CC_1,Q_1,B_{i1})$
and 
 $(\BB_2,\AA_2,$ $\CC_2,Q_2,B_{i2})$, respectively, and denote
\eq{
\Sigma_t^1 =\pmat{T_1(t)& \Phi_t^1\\ \Psi_t^1& \F_t^1}
\qquad \mbox{and} \qquad
\Sigma_t^2 =\pmat{T_2(t)& \Phi_t^2\\ \Psi_t^2& \F_t^2}
}
for $t\ge 0$.
Let 
 $x_0=(x_{10},x_{20})\tp\in X_1\times X_2$ and $u=(u_1,u_2)\tp\in \Lploc[2](0,\infty;U_1\times U_2)$.
Since $(\BB_2,\AA_2,\CC_2,Q_2,B_{i2})$ is well-posed, we have from \cref{prp:WPBCSsolutions} that the system%
\begin{subequations}
\label{eq:CascSysNonlin2nd}
\eqn{
\dot x_2(t) &=    \mcA_2 x_2(t)+ B_{i2}u_2(t)\\
 \mcB_2 x_2(t) &=  Q_2u_2(t)
\\
 y_2(t) &= \CC_2x_2(t)
}
\end{subequations}
 has a unique generalised solution $(x_2,u_2,y_2)$ satisfying $x_2(0)=x_{20}$.
Let $(u_2^k)_{k\in\N}\subset\Hloc{1}(0,\infty;U_2)$ be such that $\Pt[\tau]u_2^k\to \Pt[\tau]u_2$ for all $\tau>0$.
Since $\BB_2\in \Lin(\Dom(\AA_2),U_2)$ is surjective and $\Dom(A_2)=\ker(\BB_2)$ is dense in $X_2$, we can choose a sequence $(x_{20}^k)_{k\in\N}\subset \Dom(\AA_2)$ such that 
$ \BB_2 x_{20}^k=u_2^k(0) $ and $x_{20}^k\to x_{20}$ as $k\to\infty$.
If we define $x_2$ and $y_2$ by $(x_2^k(t),\Pt y_2^k)=\Sigma_t^2(x_{20}^k,\Pt u_2^k )$, $t\ge 0$, then \cref{prp:WPBCSsolutions} implies that
 $(x_2^k,u_2^k,y_2^k)$, $k\in \N$, are classical solutions of~\eqref{eq:CascSysNonlin2nd} and 
 $(\Pt[\tau] x_2^k,\Pt[\tau] u_2^k,\Pt[\tau] y_2^k)\tp\to (\Pt[\tau] x_2,\Pt[\tau] u_2,\Pt[\tau] y_2)\tp$ as $k\to \infty$  in $C([0,\tau];X)\times \Lp[2](0,\tau;U_2)\times \Lp[2](0,\tau;Y_2)$ for every $\tau>0$.
The assumptions on $\phi_s$ and $\phi_o$ imply that
 $t\mapsto  \phi_s(x_2(t))+\phi_o(y_2(t))\in \Lploc[2](0,\infty;U_1)$,
that
 $t\mapsto  \phi_s(x_2^k(t))+\phi_o(y_2^k(t))\in \Hloc{1}(0,\infty;U_1)$, $k\in\N$,
and that
$ \Pt[\tau] (\phi_s(x_2^k(\cdot))+\phi_o(y_2^k(\cdot)))
\to\Pt[\tau] (\phi_s(x_2(\cdot))+\phi_o(y_2(\cdot))) $ as $k\to\infty$ in $\Lp[2](0,\tau;U_1)$ for every $\tau>0$.
Let $(u_1^k)_{k\in\N}\subset\Hloc{1}(0,\infty;U_1)$ be such that $\Pt[\tau]u_1^k\to \Pt[\tau]u_1$ for all $\tau>0$.
Since $\BB_1\in \Lin(\Dom(\AA_1),U_1)$ is surjective and $\Dom(A_1)=\ker(\BB_1)$ is dense in $X_1$, we can choose $(x_{10}^k)_{k\in\N}\subset \Dom(\AA_1)$ such that 
$ \BB_1 x_{10}^k=u_1^k(0)+ \phi_s(x_2^k(0))+\phi_o(y_2^k(0)) $ and $x_{10}^k\to x_{10}$ as $k\to\infty$.
If for $k\in\N$ we define $x_1^k: \zinf \to X_1$ and $y_1^k:\zinf \to Y_1$ by
\eq{
\pmat{ x_1^k(t)\\ \Pt y_1^k }
= \Sigma_t^1  \pmat{ x_{10}^k\\\Pt (u_1^k+ \phi_s(x_2^k)+\phi_o(y_2^k)) }
+ \pmat{0\\K\Pt y_2^k}, \qquad
t\ge0,
}
then \cref{prp:WPBCSsolutions} implies that
$(x^k,u^k,y^k)$ with
$x^k= (x_1^k,x_2^k)\tp$, $ u^k=(u_1^k,u_2^k)\tp$, and $y^k=(y_1^k,y_2^k)\tp$
 are a classical solutions of~\eqref{eq:CascSysNonlin}. 
Finally, if we define 
$x_1\in C(\zinf ; X_1)$ and $y_1\in \Lploc[2] (0,\infty;Y_1)$ by
\eqn{
\label{eq:CascSysNonlinx1y1}
\pmat{ x_1(t)\\ \Pt y_1 }
= \Sigma_t^1  \pmat{ x_{10}\\\Pt (u_1+ \phi_s(x_2)+\phi_o(y_2)) }
+ \pmat{0\\K\Pt y_2}, \qquad
t\ge0,
}
then the property $ \Pt[\tau] (\phi_s(x_2^k)+\phi_o(y_2^k))
\to\Pt[\tau] (\phi_s(x_2)+\phi_o(y_2)) $ as $k\to\infty$ in $\Lp[2](0,\tau;U_1)$ for $\tau>0$
 implies that 
$\Pt[\tau] x_1^k\to \Pt[\tau] x_1$
$C([0,\tau];X)$
 and
$\Pt[\tau] y_1^k\to \Pt[\tau] y_1$
in $\Lp[2](0,\tau;Y_1)$ as $k\to\infty$ for every $\tau>0$. 
Defining $x=(x_1,x_2)$ and $y=(y_1,y_2)$ our analysis shows that
 $(\Pt[\tau]x^k,\Pt[\tau] u^k,\Pt[\tau]y^k)\tp\to (\Pt[\tau]x,\Pt[\tau] u,\Pt[\tau]y)\tp $ in $C([0,\tau];X)\times \Lp[2](0,\tau;U)\times \Lp[2](0,\tau;Y)$ as $k\to\infty$ for all $\tau>0$, and thus $(x,u,y)$ is a generalised solution of~\eqref{eq:CascSysNonlin}.

Assume now that 
$x_0=(x_{10},x_{20})\tp\in \Dom(\AA_1)\times \Dom(\AA_2)$ and $u=(u_1,u_2)\tp\in \Hloc{1}(0,\infty;U_1\times U_2)$ satisfy
$\mcB_1 x_{10}=Q_1(u_1(0)+\phi_s(x_{20})+\phi_o(\CC_2x_{20})) $
and   $\mcB_2 x_{20}=Q_2 u_2(0) $. 
Then the generalised solution $(x_2,u_2,y_2)$ of~\eqref{eq:CascSysNonlin2nd} above is in fact a classical solution by \cref{prp:WPBCSsolutions}, and $y_2\in \Hloc{1}(0,\infty;U_2)$.
Our assumptions on $\phi_s$ and $\phi_o$ imply that
$ u_1+  \phi_s(x_2)+\phi_o(y_2)\in \Hloc{1}(0,\infty;U_1)$. 
Since 
$\mcB_1 x_{10}=Q_1(u_1(0)+\phi_s(x_{20})+\phi_o(\CC_2x_{20})) $,  \cref{prp:WPBCSsolutions} and the structure of the system~\eqref{eq:CascSysNonlin} imply that if we define $x_1\in C(\zinf;X_1)$ and $y_1\in \Lploc[2](0,\infty;Y_1)$ by~\eqref{eq:CascSysNonlinx1y1}, then $(x,u,y)$ is a classical solution of~\eqref{eq:CascSysNonlin} and $x(0)=x_0$.

Finally, we will prove the estimate for a pair $(x,u,y)$ and $(x',u',y')$ of generalised solutions. This estimate in particular implies the uniqueness of generalised solutions.
To this end, denote the Lipschitz constant of $\phi_o$ by $L_o>0$ and fix $\tau>0$ and $R>0$. 
Let 
$x_0=(x_{10},x_{20})\tp\in \Dom(\AA_1)\times \Dom(\AA_2)$
$x_0'=(x_{10}',x_{20}')\tp\in \Dom(\AA_1)\times \Dom(\AA_2)$
 and $u=(u_1,u_2)\tp\in \Hloc{1}(0,\infty;U_1\times U_2)$
 and $u'=(u_1',u_2')\tp\in \Hloc{1}(0,\infty;U_1\times U_2)$
be such that
$\norm{x_0}\le R$, 
$\norm{x'_0}\le R$, 
$\norm{\Pt[\tau]u}_{L^2}\le R$, 
and $\norm{\Pt[\tau]u'}_{L^2}\le R$
and such that
$\mcB_1 x_{10}=Q_1(u_1(0)+\phi_s(x_{20})+\phi_o(\CC_2x_{20})) $,
   $\mcB_2 x_{20}=Q_2 u_2(0) $,
$\mcB_1 x_{10}'=Q_1(u_1'(0)+\phi_s(x_{20}')+\phi_o(\CC_2x_{20}')) $,
and   $\mcB_2 x_{20}'=Q_2 u_2'(0) $.
Let $(x,u,y)$ classical solutions of~\eqref{eq:CascSysNonlin} satisfying $x(0)=x_0$ and
let $(x',u',y')$ be the classical solutions of~\eqref{eq:CascSysNonlin} satisfying $x'(0)=x_0'$.
 The structure of~\eqref{eq:CascSysNonlin} and \cref{prp:WPBCSsolutions} imply that 
\eq{
x_2(t) &= T_2(t)x_2(0) + \Phi_t^2 \Pt u_2,  \qquad  t\in [0,\tau]\\
\Pt[\tau] y_2 &= \Psi_\tau^2 x_2(0) + \F_\tau^2 \Pt[\tau] u_2
}
and similarly for $(x',u',y')$. In particular, there exists a constant $M_2>0$ depending only on 
 $(T_2,\Phi^2,\Psi^2,\F^2)$ and $\tau>0$
such that
\eq{
\norm{\Pt[\tau]x_2}_\infty+ \norm{\Pt[\tau]y_2}_{L^2} &\le M_2 \left( \norm{x_2(0)} + \norm{\Pt[\tau]u_2} \right)\le 2M_2 R\\
\norm{\Pt[\tau]x_2'}_\infty+ \norm{\Pt[\tau]y_2'}_{L^2} &\le M_2 \left( \norm{x_2'(0)} + \norm{\Pt[\tau]u_2'} \right)\le 2M_2 R.
}
Moreover, if we define $z=(z_1,z_2)\tp:=x-x'$, $v=(v_1,v_2)\tp:=u-u'$, and $w=(w_1,w_2)\tp:=y-y'$, then 
\eqn{
\label{eq:CascSysNonlinz2est}
\norm{\Pt[\tau]z_2}_\infty+ \norm{\Pt[\tau]w_2}_{L^2} &\le M_2 \left( \norm{z_2(0)} + \norm{\Pt[\tau]v_2} \right).
}
Our assumptions on $\phi_s$ and $\phi_o$ imply that $\psi:= v_1+\phi_s(x_2)-\phi_s(x_2')+\phi_o(y_2)-\phi_o(y_2')$ satisfies $\psi\in \Hloc{1}(0,\infty;U_1)$. 
Since $\phi_s$ is locally Lipschitz and 
$\norm{\Pt[\tau]x_2}_\infty\le 2M_2R $
and
$\norm{\Pt[\tau]x_2'}_\infty\le 2M_2R $, there exists  $L_R>0$ depending only on $M_2$ and $R$  such that 
\eq{
\norm{\Pt[\tau]\psi}_{L^2}
&\le \norm{\Pt[\tau]v_1}_{L^2} + L_R \norm{\Pt[\tau] z_2}_{L^2} + L_o \norm{\Pt[\tau] w_2}_{L^2}\\
&\le \norm{\Pt[\tau]v_1}_{L^2} + (\sqrt{\tau} L_R  + L_o)M_2 \left( \norm{z_2(0)} + \norm{\Pt[\tau]w_2}_{L^2} \right).
}
The structure of~\eqref{eq:CascSysNonlin} and \cref{prp:WPBCSsolutions} imply that 
\eq{
z_1(t) &= T_1(t)z_1(0) + \Phi_t^1 \Pt \psi,  \qquad \qquad t\in [0,\tau]\\
\Pt[\tau] w_1 &= \Psi_\tau^1 z_1(0) + \F_\tau^1 \Pt[\tau] \psi + K\Pt[\tau] w_2.
}
Thus there exists $M_1>0$ depending only on 
 $(T_1,\Phi^1,\Psi^1,\F^1)$ and $\tau>0$ such that 
\eq{
\MoveEqLeft[5]\norm{\Pt[\tau]z_1}_\infty+ \norm{\Pt[\tau]w_1}_{L^2} 
\le M_1 \left( \norm{z_1(0)} + \norm{\Pt[\tau]\psi}_{L^2} \right) + \norm{K}\norm{\Pt[\tau] w_2}_{L^2}\\
&\le M_1 \left( \norm{z_1(0)}  + \norm{\Pt[\tau]v_1}_{L^2} \right)\\
 &\quad+(M_1\sqrt{\tau} L_R  + M_1L_o + \norm{K})M_2 \left( \norm{z_2(0)} + \norm{\Pt[\tau]w_2}_{L^2}\right).
}
Combining with~\eqref{eq:CascSysNonlinz2est} this shows that the estimate in the claim for classical solutions $(x,u,y)$ and $(x',u',y')$ with $u,u'\in \Hloc{1}(0,\infty;U_1\times U_2)$. 
It is straightforward to show that the existence of $M_{\tau,R}>0$ such that  the estimate holds also extends to generalised solutions. 
Finally, if $\phi_s$ is globally Lipschitz, the constant $L_R>0$ above can be chosen to be independently of $R$, which implies the same for $M_{\tau,R}>0$.
\end{proof}

\subsection{Stability}

The following result collects properties of the solutions of~\eqref{eq:BCS} in the case where the associated semigroup is exponentially stable. The notation $L_\gw^2(0,\infty;U)$ is defined in \cref{sec:notation}.

\begin{proposition}
\label{prp:BNstability}
Assume that $(\BB,\AA,\CC,Q,B_i)$ is a well-posed boundary node on \BNspStandard\ and that the semigroup $T$ generated by $A=\AA\vert_{\ker(\BB	)}$ is exponentially stable.
 Let $\phi_s=0$ and $\phi_o=0$. 
For every $\gw_s,\gw>\gw_0(T)$ there exists $M>0$ such that
for $x_0\in X$ and
$u\in \Lploc[2](0,\infty;U)$
 the generalised solution $(x,u,y)$ of~\eqref{eq:BCS} 
satisfies
\eq{
\norm{x(t)} &\le Me^{\gw_s t} \norm{x_0} + Me^{\gw t}\norm{u}_{L_\gw^2(0,\infty)}, \quad  && t\ge 0\\
\norm{x(t)} &\le Me^{\gw_s t} \norm{x_0} + M\norm{u}_{L^\infty(0,\infty)}, \quad && t\ge 0\\
\norm{y}_{L_\gw^2(0,\infty)}
 &\le M \norm{x_0} + M\norm{u}_{L_\gw^2(0,\infty)}.
}
In particular, the following hold:
\begin{itemize}
\setlength{\itemsep}{.8ex}
\item[\textup{(a)}] 
If $u\in L_\gw^2(0,\infty;U)$, then $\sup_{t\ge 0}e^{-\gw t}\norm{x(t)}<\infty$ and 
$y\in L_\gw^2(0,\infty;Y)$.
\item[\textup{(b)}]
If $\gw_0(T)<0$ and $u\in L^\infty(0,\infty;U)$, then $\sup_{t\ge 0}\norm{x(t)}<\infty$.
\end{itemize}
\end{proposition}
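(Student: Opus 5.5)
The plan is to work with the well-posed linear system $(T,\Phi,\Psi,\F)$ associated with the boundary node and to use the representation~\eqref{eq:BCSmildsol} from \cref{prp:WPBCSsolutions}: $x(t)=T(t)x_0+\Phi_t\Pt u$ and $\Pt y=\Psi_t x_0+\F_t\Pt u$. The semigroup term satisfies $\norm{T(t)x_0}\le Me^{\gw_s t}\norm{x_0}$ for any $\gw_s>\gw_0(T)$. It therefore remains to bound $\Phi_t\Pt u$ in the two norms and $y$ in $L^2_\gw$. Since $\gw_0(T)<0$ is assumed, I may take $\gw_0(T)<\gw$ and, if needed, also shrink $\gw$ to a value $\gw'$ with $\gw_0(T)<\gw'<\gw$, since the estimates with $\gw'$ imply those with $\gw$.

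\textbf{Input map.} I split $[0,t]$ into unit intervals and use the composition property of well-posed linear systems:
\[
\Phi_t\Pt u=\sum_{k=0}^{\lceil t\rceil-1} T(t-k-1)\,\Phi_1 (\mathbf{S}_k u),
\]
where $\mathbf{S}_k u$ denotes $u$ restricted to $[k,k+1]$ and shifted to $[0,1]$; the last, partial interval is handled by $\Phi_s$ with $s\le1$ and $\norm{\Phi_s}\le\norm{\Phi_1}$. This gives
\[
\norm{\Phi_t\Pt u}\lesssim \sum_k e^{\gw'(t-k-1)}\norm{u}_{L^2(k,k+1)}.
\]
For the $L^\infty$ bound, $\norm{u}_{L^2(k,k+1)}\le\norm{u}_{L^\infty}$ and the geometric series $\sum_k e^{\gw'(t-k-1)}$ is bounded uniformly in $t$ because $\gw'<0$. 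For the weighted bound, I write $\norm{u}_{L^2(k,k+1)}\le e^{\gw(k+1)}\norm{e^{-\gw\cdot}u}_{L^2(k,k+1)}$. Then Cauchy--Schwarz gives a factor $\bigl(\sum_k e^{2\gw'(t-k-1)}e^{2\gw(k+1)}\bigr)^{1/2}\lesssim e^{\gw t}$, since $\gw'<\gw$. This yields the first two estimates.

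\textbf{Output.} Here I would use the frequency domain. Because $\gw>\gw_0(T)$, the operators $\Psi_\infty$ and $\F_\infty$ extend boundedly to the exponentially weighted spaces $L^2_\gw$; this is the standard fact that the output map and input-output map of a well-posed linear system are bounded on $L^2_\gw$ for every $\gw$ above the growth bound (see~\cite{TucWei14}). Concretely, $\Psi_\infty x_0=\CC T(\cdot)x_0$ can be estimated by concatenation: $\norm{\Psi_1 T(k)x_0}\le Me^{\gw' k}\norm{x_0}$, and summing with the weight $e^{-\gw k}$ converges. For $\F$, the same concatenation writes $(\F u)$ restricted to $[k,k+1]$ as a sum of $\Psi_1T(k-j-1)\Phi_1 u_j$ over $j<k$ plus $\F_1u_k$. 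This produces a discrete convolution with kernel $e^{(\gw'-\gw)(k-j)}$ acting on the sequence $\bigl(\norm{e^{-\gw\cdot}u}_{L^2(j,j+1)}\bigr)_j$, which is bounded on $\ell^2$ by Young's inequality. The constants comparing $e^{-\gw t}$ across unit intervals are absorbed into $M$. Finally, (a) and (b) follow directly from the three estimates.

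I expect the output estimate, specifically the convolution bookkeeping for $\F$, to be the main technical step, although it is routine once the concatenation identities of well-posed linear systems are invoked.
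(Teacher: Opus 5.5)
Your argument is correct, and its skeleton is the paper's: write $x(t)=T(t)x_0+\Phi_t\Pt u$ and $y=\Psi_\infty x_0+\F_\infty u$, bound $T$ by $e^{\gw_s t}$, and bound $\Phi_t$, $\Psi_\infty$, $\F_\infty$ in the weighted norms. The difference is where the weighted bounds come from.

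\begin{itemize}
\item The paper cites \citel{Sta05book}{Thm.~2.5.4 \& 2.8.1} and \citel{Sta05book}{Ex.~2.5.3}. These give the fact that a well-posed system is $\gw$-bounded for every $\gw>\gw_0(T)$.
\item Your unit-interval argument proves exactly that fact. It uses concatenation with geometric sums, Cauchy--Schwarz for $\Phi$, and Young's inequality for the discrete convolution representing $\F_\infty$. It makes the proof self-contained and shows that only the growth bound of $T$ matters, at the cost of length.
\end{itemize}

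For the $L^\infty$ estimate, the paper avoids a separate computation. It applies the weighted input bound with some $\gw\in(\gw_0(T),0)$ to $u$ truncated to $[0,t]$, which gives $\norm{\Phi_t\Pt u}\lesssim e^{\gw t}\norm{e^{-\gw\cdot}u}_{L^2(0,t)}\lesssim\norm{u}_{L^\infty}$. You read the same bound off your geometric series directly, which is equally valid.

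Two small corrections.

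\begin{itemize}
\item Your claim that ``the estimates with $\gw'$ imply those with $\gw$'' is false. The estimates are not monotone in the weight: for $\gw'<\gw$ we have $\norm{u}_{L^2_{\gw'}}\ge\norm{u}_{L^2_\gw}$, so a $\gw'$-estimate says nothing about inputs in $L^2_\gw$ that are not in $L^2_{\gw'}$. You never actually use this claim. What you do is use $\gw'$ only as the decay rate of $T$ while keeping $\gw$ as the weight, and that is right. Just fix $\gw'\in(\gw_0(T),\min\{\gw,0\})$ once, so that both the weighted and the $L^\infty$ sums converge.
\item The bound $\norm{u}_{L^2(k,k+1)}\le e^{\gw(k+1)}\norm{e^{-\gw\cdot}u}_{L^2(k,k+1)}$ holds only for $\gw\ge 0$. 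Negative $\gw$ is allowed here, and then you should use $e^{\gw k}$ instead. This costs only a harmless factor $e^{\abs{\gw}}$.
\end{itemize}
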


\begin{proof}
Let $\Sigma=(T,\Phi,\Psi,\F)$ be the well-posed system associated with the boundary node $(\BB,\AA,\CC,Q,B_i)$. If we denote by $\Psi_\infty$ and $\F_\infty$ the extended output map and extended input-output map~\citel{TucWei14}{Sec.~3}, respectively, of $\Sigma$, then~\citel{Sta05book}{Thm.~2.5.4 \& 2.8.1} and~\citel{Sta05book}{Ex.~2.5.3} imply that 
\ieq{
\norm{\Phi_t \Pt u}_X 
\lesssim e^{\gw t} \norm{u}_{L_\gw^2(0,\infty;U)},
}
$t\ge 0$,
\ieq{
\norm{\Psi_\infty x_0}_{L_\gw^2(0,\infty;Y)}
 \lesssim \norm{x_0},
}
and 
\ieq{
\norm{\F_\infty u}_{L_\gw^2(0,\infty;Y)}
\lesssim \norm{u}_{L_\gw^2(0,\infty;U)}.
}
These together with \cref{prp:WPBCSsolutions} and $\sup_{t\ge 0}e^{-\gw_s t}\norm{T(t)}<\infty$ imply the first and third estimate in the claim.
If $u\in L^\infty(0,\infty;U)$ and $t>0$, applying the first of the above estimates with $\gw\in (\gw_0(T),0)$ to $v\in L_\gw^2(0,\infty;U)$ defined so that $v(s)=u(s)$ for a.e. $s\in [0,t]$ and $v(s)=0$ for $s>t$ shows that 
\ieq{
\norm{\Phi_t \Pt u}_X 
\lesssim e^{\gw t} \norm{v}_{L_\gw^2(0,\infty;U)}
= e^{\gw t} \norm{e^{-\gw \cdot}u}_{L^2(0,t)} \lesssim \norm{u}_{L^\infty} .
}
This implies the second estimate in the claim.
\end{proof}

\subsection{System Transformations}

The following lemma provides a sufficient condition for the possibility to exchange the roles of some of the inputs and outputs of a boundary node. Such partial \emph{flow inversion}~\citel{Sta05book}{Sec.~6.3} is used in the construction of the active disturbance rejection controller in \cref{sec:AbstractController}.

\begin{lemma}[{\citel{NicPau25}{Prop.~2.8}}]
\label{lem:IOswapIP}
Assume that $(\BB,\AA,\CC,I,0)$ is a boundary node on the Hilbert spaces $\BNsp{U}{U}{X}{U}$ satisfying $\re \iprod{\AA x}{x}_X\le \re \iprod{\BB x}{\CC x}_U$  for $x\in \Dom(\AA)$.
Assume that $U=U_1\times U_2$ and denote
\eq{
\BB = \pmat{\BB_1\\ \BB_2}, \quad \CC = \pmat{\CC_1\\\CC_2}, \quad
\BB' = \pmat{\CC_1\\ \BB_2}, \quad 
\mbox{and} \quad
\CC' = \pmat{\BB_1\\\CC_2}.
}
If the transfer function $P_1$ of $(\BB_1,\AA,\CC_1,I,0)$ is such that $P_1(\gl)$ is boundedly invertible for some $\gl\in \rho(A)\cap\overline{\C_0^+}$, then $(\BB',\AA,\CC',I,0)$ is a boundary node on  $\BNsp{U}{U}{X}{U}$.
\end{lemma}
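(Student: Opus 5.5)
The plan is to verify the three conditions of \cref{def:Bnode} for $(\BB',\AA,\CC',I,0)$ directly. Condition~(i) is immediate, since $\BB',\CC'\in\Lin(\Dom(\AA),U)$ are obtained by rearranging the components of $\BB$ and $\CC$. The remaining work is the generation property~(ii) and the existence of a bounded right inverse~(iii). Both will be reduced to solving a boundary value problem with the help of the transfer function $P_1$ and its invertibility.

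\textbf{Reading of the hypothesis and a first reduction.} I interpret $(\BB_1,\AA,\CC_1,I,0)$ as the boundary node whose underlying operator is $\AA$ on $\setm{x\in\Dom(\AA)}{\BB_2x=0}$. This makes sense because $\BB$ has a bounded right inverse, so $\BB_1$ maps this subspace onto $U_1$. With this reading, its transfer function is given by $H_1(\gl)w=x$, where $x$ is the unique solution of
\[
(\gl-\AA)x=0,\qquad \BB_1x=w,\qquad \BB_2x=0,
\]
and then $P_1(\gl)w=\CC_1x$. The semigroup of $(\BB_1,\AA,\CC_1,I,0)$ is generated by $A$, and $A$ generates a contraction semigroup because $\re\iprod{Ax}{x}\le0$ on $\ker(\BB)$. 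Hence $\overline{\C_0^+}\cap\rho(A)\supset\C_0^+$.

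I first move to a point with positive real part. The map $P_1$ is analytic on the open set $\rho(A)$, and the set of boundedly invertible operators is open. Therefore, if $P_1(\gl_0)$ is invertible with $\re\gl_0=0$, then $P_1(\gl)$ is also invertible for some nearby $\gl$ with $\re\gl>0$. So I may fix $\gl\in\C_0^+$ with $P_1(\gl)^{-1}\in\Lin(U_1)$.

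\textbf{Generation.} Let $A'=\AA\vert_{\ker(\BB')}$, so that $\Dom(A')=\ker(\CC_1)\cap\ker(\BB_2)$. For $x\in\Dom(A')$ the assumed inequality gives
\[
\re\iprod{\AA x}{x}\le\re\iprod{\BB_1x}{\CC_1x}+\re\iprod{\BB_2x}{\CC_2x}=0,
\]
so $A'$ is dissipative.

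For the range condition, take $f\in X$ and set $z=(\gl-A)^{-1}f$. Then put
\[
x=z-H_1(\gl)P_1(\gl)^{-1}\CC_1z .
\]
This $x$ satisfies $(\gl-\AA)x=f$ and $\BB_2x=0$. It also satisfies $\CC_1x=\CC_1z-\CC_1z=0$, so $x\in\Dom(A')$. Hence $\gl-A'$ is surjective for this $\gl$ with $\re\gl>0$.

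A dissipative operator on a Hilbert space with $\ran(\gl-A')=X$ for one $\gl$ with $\re\gl>0$ is m-dissipative. It is then automatically densely defined on a Hilbert space. The Lumer--Phillips theorem therefore shows that $A'$ generates a contraction semigroup. I expect this step to need the most care: one must make precise the standard fact that surjectivity at a single complex $\gl$ in the open right half-plane, together with dissipativity, suffices, and one must use the perturbation argument above when $\re\gl_0=0$.

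\textbf{Right inverse.} Take $(v_1,v_2)\in U_1\times U_2$. Let $x_0=\BB^r(0,v_2)$ and set
\[
x=x_0+H_1(\gl)P_1(\gl)^{-1}(v_1-\CC_1x_0).
\]
Then $\BB_2x=v_2$ and $\CC_1x=v_1$, that is, $\BB'x=(v_1,v_2)$. The map $(v_1,v_2)\mapsto x$ is a composition of bounded operators into $\Dom(\AA)$ with the graph norm, because $H_1(\gl)\in\Lin(U_1,\Dom(\AA))$. It is therefore a bounded right inverse of $\BB'$, and all conditions of \cref{def:Bnode} hold.
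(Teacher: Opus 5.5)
Your proof is correct. The paper does not prove this lemma itself; it imports it from \citel{NicPau25}{Prop.~2.8}. Your proposal is therefore a self-contained verification of \cref{def:Bnode}, in four steps:
\begin{enumerate}
\item dissipativity of $A'=\AA\vert_{\ker(\BB')}$ from the passivity inequality;
\item surjectivity of $\gl-A'$ via the correction term $H_1(\gl)P_1(\gl)\inv\CC_1 z$;
\item generation by Lumer--Phillips;
\item a bounded right inverse of $\BB'$ built from the same correction term.
\end{enumerate}
You read $(\BB_1,\AA,\CC_1,I,0)$ as the node with the constraint $\BB_2x=0$ built into the domain. This is the reading the paper itself uses; compare the node $(\BB_d,\AA\vert_{\ker(\BB_h)},\CC,I,0)$ in the heat example and the block $P_{dd}$ in \cref{rem:Adsyssuffcond}.

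Two small observations. First, the right-inverse construction works at any $\gl\in\rho(A)$ where $P_1(\gl)$ is invertible. So the hypothesis $\gl\in\overline{\C_0^+}$ is needed only for generation, and it enters exactly through your perturbation into $\C_0^+$. Second, since $\re\iprod{\BB_1x}{\CC_1x}=\re\iprod{\CC_1x}{\BB_1x}$, you actually have $\re\iprod{\BB x}{\CC x}=\re\iprod{\BB'x}{\CC'x}$ on $\Dom(\AA)$. The swapped node therefore inherits the passivity inequality, which is what allows \cref{lem:IPBCSWP} to be applied to it afterwards.
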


The following lemma presents conditions for the boundary node property and well-posedness under state transformations and changes in the inputs and outputs of the boundary control system.

\begin{lemma}
\label{lem:BnodeModifications}
Let $(\BB,\AA,\CC,Q,B_i)$ be a boundary node on \BNspStandard\ and let $X_0 $, $U_0$, $U_{b0}$, and $Y_0$ be Hilbert spaces. 
Let $S\in \Lin(X,X_0)$ and $S_b\in \Lin(U_b,U_{b0})$ be boundedly invertible and let
$S_i\in \Lin(U_0,U)$, $S_o\in \Lin(Y,Y_0)$, and $D\in \Lin(U_b,Y)$.
 Then the following hold.
\begin{itemize}
\item[\textup{(a)}] 
$(S_b\BB S\inv,S\AA S\inv,\CC S\inv,S_bQ,SB_i)$ is a boundary node on 
$(U_{b0},U,X_0,$ $Y)$
 and $(S_b\BB S\inv,S\AA S\inv,\CC S\inv,S_bQ,SB_i)$ is well-posed if and only if 
 $(\BB,\AA,\CC,Q,B_i)$ is well-posed.
\item[\textup{(b)}] 
 $(\BB,\AA,S_o\CC,QS_i,B_iS_i)$ is a boundary node on $\BNsp{U_b}{U_0}{X}{Y_0}$ and well-posed\-ness of $(\BB,\AA,\CC,Q,B_i)$ implies the well-posedness of $(\BB,\AA,S_o\CC,$ $QS_i,B_iS_i)$.  
\item[\textup{(c)}]
 $(\BB,\AA,\CC+D\BB,Q,B_i)$ is a boundary node on \BNspStandard,
 and well-posedness of $(\BB,\AA,\CC,Q,B_i)$ implies the well-posedness of $(\BB,\AA,\CC+D\BB,Q,B_i)$.  
\end{itemize}
\end{lemma}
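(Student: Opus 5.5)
The plan is to verify the three parts directly from \cref{def:Bnode} and \cref{def:BCSwellposed}, by relating classical solutions of the modified systems to classical solutions of the original one.

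\textbf{Part (a).} Set $\tilde\AA=S\AA S\inv$ with $\Dom(\tilde\AA)=S\Dom(\AA)$, equipped with the graph norm. Then $S$ is an isomorphism from $\Dom(\AA)$ onto $\Dom(\tilde\AA)$, so $\tilde\BB=S_b\BB S\inv\in\Lin(\Dom(\tilde\AA),U_{b0})$ and $\CC S\inv\in\Lin(\Dom(\tilde\AA),Y)$. Also $S_bQ\in\Lin(U,U_{b0})$ and $SB_i\in\Lin(U,X_0)$. For condition (ii), $\ker(\tilde\BB)=S\ker(\BB)$ and $\tilde\AA\vert_{\ker\tilde\BB}=SAS\inv$, which generates the similar semigroup $ST(t)S\inv$. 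For condition (iii), $S\BB^rS_b\inv$ is a bounded right inverse of $\tilde\BB$. Next, $(x,u,y)$ is a classical solution of the original system if and only if $(Sx,u,y)$ is a classical solution of the transformed one: the boundary condition $\BB x=Qu$ is equivalent to $S_b\BB S\inv(Sx)=S_bQu$ because $S_b$ is injective, and the compatibility condition $\BB x_0=Qu(0)$ transforms in the same way. Since $\norm{Sx}\eqsim\norm{x}$, the estimate~\eqref{eq:BCSWPdef} holds for one system exactly when it holds for the other, with the constant changed by factors of $\norm{S}$ and $\norm{S\inv}$. The converse direction follows by symmetry, applying the same argument with $S\inv$ and $S_b\inv$.

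\textbf{Part (b).} The operators $\BB$ and $\AA$ are unchanged, so conditions (ii) and (iii) are immediate, and the boundedness requirements hold trivially. If $x_0\in\Dom(\AA)$ and $v\in C^2([0,\tau];U_0)$ satisfy $\BB x_0=QS_iv(0)$, then $u=S_iv\in C^2([0,\tau];U)$ and the classical solution $(x,u,y)$ of the original system gives the classical solution $(x,v,S_oy)$ of the modified one. Uniqueness comes from~\citel{MalSta06}{Lem.~2.6}. The estimate~\eqref{eq:BCSWPdef} then follows with constant $M_\tau\max\{1,\norm{S_o}^2\}\max\{1,\norm{S_i}^2\}$.

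\textbf{Part (c).} The operator $D\BB$ belongs to $\Lin(\Dom(\AA),Y)$, so $\CC+D\BB$ is an admissible output operator and the tuple is a boundary node. For a classical solution, $\BB x(t)=Qu(t)$, so the new output is $y+DQu$. This gives
\[
\int_0^\tau\norm{y+DQu}^2dt\le 2\int_0^\tau\norm{y}^2dt+2\norm{DQ}^2\int_0^\tau\norm{u}^2dt,
\]
and~\eqref{eq:BCSWPdef} follows with a modified constant.

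The only step requiring any care is part (a): one has to check that the graph norms correspond, so that the transformed operators are bounded on $\Dom(\tilde\AA)$, and that the compatibility conditions match under the transformation. All other steps are routine.
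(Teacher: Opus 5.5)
Your proof is correct and follows essentially the same route as the paper. The paper calls (a) and (b) straightforward and proves (c) from the same identity $(\CC+D\BB)x(t)=y(t)+DQu(t)$ on classical solutions. The only difference is presentation: the paper phrases (c) through the maps $\Phi_\tau'=\Phi_\tau$, $\Psi_\tau'=\Psi_\tau$ on $\ker(\BB)$ and $\F_\tau'=\F_\tau+DQ$, whereas you verify the defining estimate directly, which is equally valid.
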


\begin{proof}
It is straightforward to verify (a) and (b).
Clearly $(\BB,\AA,\CC+D\BB,Q,B_i)$ is a boundary node. 
Moreover, if $(\BB,\AA,\CC,Q,B_i)$ is well-posed and $(T,\Phi,\Psi,\F)$ is the associated well-posed linear system, then the input maps $\Phi_\tau'$, output maps $\Psi_\tau'$, and input-output maps $\F_\tau'$ of $(\BB,\AA,\CC+D\BB,Q,B_i)$ satisfy $\Phi_\tau' \Pt[\tau]u=\Phi_\tau \Pt[\tau]u$ and $\F_\tau' \Pt[\tau]u=\F_\tau\Pt[\tau]u+DQ\Pt[\tau]u$ for $u\in C_\ell^2([0,\tau];U)$ and $\Psi_\tau' x_0=\Psi_\tau x_0$ for $x_0\in \Dom(A)=\ker(\BB)$. 
These formulas imply that $(\BB,\AA,\CC+D\BB,Q,B_i)$ is well-posed.
\end{proof}

Finally, the following result describes the solutions of the nonlinear system~\eqref{eq:BCS} under transformations of the state and output.

\begin{lemma}
\label{lem:BCSnonlinSolEquivalence}
Let $(\BB,\AA,\CC,Q,B_i)$ be a well-posed boundary node on 
$(U_b,U,$ $X,Y)$.
 Assume that $\phi_s:X\to U$ is locally Lipschitz and has linear growth and $\phi_o:Y\to U$ is globally Lipschitz. Let $X_0 $ and $Y_0$ be Hilbert spaces and let $S\in \Lin(X,X_0)$ be boundedly invertible and $S_o\in \Lin(Y,Y_0)$. Let $\phi_o':Y_0\to U$ be a globally Lipschitz function satisfying $\phi_o'(S_oy)=\phi_o(y)$, $y\in Y$.
Then $(x,u,y)$ is a generalised (resp. classical) solution of the system~\eqref{eq:BCS} 
 if and only if $(z,u,\tilde y)$ with $z=Sx$ and $\tilde y=S_o y$ is a generalised (resp. classical) solution of the system%
\begin{subequations}%
\label{eq:BCSnonlinTrans}%
\eqn{
\dot z(t) &= \mcA' z(t) + B_i'(u(t) + \phi_s'(z(t)) + \phi_o'(\tilde y(t)))\\
\mcB' z(t) &=  Q (u(t) + \phi_s'(z(t)) + \phi_o'(\tilde y(t))) \\
\tilde y(t) &= S_o\mcC' z(t),
}
\end{subequations}
where $\AA'=S\AA S\inv $ with $\Dom(\AA')=S(\Dom(\AA))$, $\BB'=\BB S\inv $, $\CC'=\CC S\inv$, $B_i'=SB_i$, and $\phi_s'=\phi_s(S\inv \cdot)$.
Moreover, the generalised solutions $(x,u,y)$ of~\eqref{eq:BCS} are uniquely determined by $x(0)$ and $u\in \Lploc[2](0,\infty;U)$ if and only if the generalised solutions $(x,u,\tilde y)$ of~\eqref{eq:BCSnonlinTrans} are uniquely determined by $z(0)$ and $u\in \Lploc[2](0,\infty;U)$.
\end{lemma}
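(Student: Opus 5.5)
The plan is to verify the equivalence first for classical solutions by direct substitution, and then pass to generalised solutions through the approximating sequences of \cref{def:Solutionsnonlin}.

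\textbf{Setting up the transformed node.} By \cref{lem:BnodeModifications}(a), with $S_b=I$, the tuple $(\BB',\AA',\CC',Q,B_i')$ is a well-posed boundary node on $(U_b,U,X_0,Y)$. Applying \cref{lem:BnodeModifications}(b) with $S_i=I$, the tuple $(\BB',\AA',S_o\CC',Q,B_i')$ is then a well-posed boundary node on $(U_b,U,X_0,Y_0)$. The nonlinearities are admissible. Indeed, $\phi_s'=\phi_s(S\inv\cdot)$ is locally Lipschitz and has linear growth, since $S\inv$ is bounded. Also, $\phi_o'$ is globally Lipschitz by assumption.

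\textbf{Classical solutions.} Let $(x,u,y)$ be a classical solution of~\eqref{eq:BCS}. Put $z=Sx$ and $\tilde y=S_oy$. Then $z\in C^1(\zinf;X_0)$ and $z(t)\in S(\Dom(\AA))=\Dom(\AA')$. Moreover $\tilde y\in C(\zinf;Y_0)$ because $S_o$ is bounded. Applying $S$ to the differential equation gives
\[
\dot z = S\AA S\inv z + SB_i(u+\phi_s(S\inv z)+\phi_o(y)).
\]
Here $\phi_o(y)=\phi_o'(S_oy)=\phi_o'(\tilde y)$. For the remaining equations, $\BB' z=\BB x$ and $S_o\CC' z=S_o\CC x=\tilde y$. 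Hence $(z,u,\tilde y)$ is a classical solution of~\eqref{eq:BCSnonlinTrans}.

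The converse needs care, because $S_o$ need not be injective. Given a classical solution $(z,u,\tilde y)$ of~\eqref{eq:BCSnonlinTrans}, define $x=S\inv z$ and $y:=\CC x$. Then $\tilde y=S_o\CC' z=S_o y$, so $\phi_o'(\tilde y)=\phi_o(y)$. The same computation run backwards shows that $(x,u,y)$ is a classical solution of~\eqref{eq:BCS}, and $y$ is continuous since $\CC\in\Lin(\Dom(\AA),Y)$ and $x$ is continuous in the graph norm.

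The statement should be read in this sense: for a given solution $(z,u,\tilde y)$ there is a $y$ with $\tilde y=S_oy$ and $(x,u,y)$ a solution of~\eqref{eq:BCS}.

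\textbf{Generalised solutions.} The forward direction follows by applying the classical case to an approximating sequence $(x_k,u_k,y_k)$. Since $S$ and $S_o$ are bounded, $(Sx_k,u_k,S_oy_k)$ converges to $(Sx,u,S_oy)$ in $C([0,\tau];X_0)\times L^2\times L^2$.

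For the converse, I would use the mild-solution characterisations instead of approximating sequences, to avoid the lack of an inverse for $S_o$. Let $(T,\Phi,\Psi,\F)$ be the system of the original node. The system of the transformed node is then $(ST S\inv,S\Phi,S_o\Psi S\inv,S_o\F)$. This identity is checked on classical solutions and then extended by density.

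Now let $(z,u,\tilde y)$ be a generalised solution of~\eqref{eq:BCSnonlinTrans}. \cref{prp:BCSNLsolutions} gives
\[
z(t)=STS\inv z(0)+S\Phi_t\Pt(u+\phi_s'(z)+\phi_o'(\tilde y)).
\]
Set $x=S\inv z$ and $u_0=u+\phi_s(x)+\phi_o'(\tilde y)\in\Lploc[2]$. Define $\Pt y:=\Psi_tx(0)+\F_t\Pt u_0$; this is consistent across $t$ by causality. Then $S_o\Pt y=\Pt\tilde y$, again by \cref{prp:BCSNLsolutions}, so $\phi_o(y)=\phi_o'(\tilde y)$. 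Consequently $(x,u,y)$ satisfies~\eqref{eq:BCSNLmildsol}, and the converse part of \cref{prp:BCSNLsolutions} shows that it is a generalised solution.

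\textbf{Uniqueness.} The uniqueness statement then follows from the bijective correspondence of the state trajectories $x\leftrightarrow z=Sx$, together with the fact that the outputs are determined by $x$ and $u$ through the mild-solution formulas.

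\textbf{Main obstacle.} The main difficulty is the converse direction when $S_o$ is not invertible, which requires reconstructing $y$ from $x$. The mild-solution route above is what I would try first.
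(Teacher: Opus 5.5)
Your existence argument follows the paper's proof. The converse for generalised solutions is exactly the paper's reconstruction $y=\Psi_\infty x(0)+\F_\infty(u+\phi_s(x)+\phi_o'(\tilde y))$, with $S_oy=\tilde y$ read off from the mild formula of the transformed node. Your forward direction via the classical solutions $(Sx_k,u_k,S_oy_k)$ is a valid, slightly more elementary replacement for the paper's double use of \cref{prp:BCSNLsolutions}. In the classical converse you should still say why $x$ is continuous in the graph norm: $\AA x=\dot x-B_i(u+\phi_s(x)+\phi_o'(\tilde y))$ is continuous.

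The uniqueness paragraph, however, skips the one point where the non-injectivity of $S_o$ matters. Assume uniqueness for \eqref{eq:BCSnonlinTrans}, and let $(x_1,u,y_1)$ and $(x_2,u,y_2)$ be generalised solutions of \eqref{eq:BCS} with $x_1(0)=x_2(0)$. The correspondence of states gives $x_1=x_2=:x$. But the mild formula
\[
\Pt y_k=\Psi_t x(0)+\F_t\Pt\bigl(u+\phi_s(x)+\phi_o(y_k)\bigr)
\]
is implicit in $y_k$ through $\phi_o(y_k)$. So it does not by itself show that the output is determined by $x$ and $u$, which is what you assert.

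What is missing is to use the uniqueness hypothesis on the transformed outputs as well. The triples $(Sx,u,S_oy_1)$ and $(Sx,u,S_oy_2)$ solve \eqref{eq:BCSnonlinTrans} with the same initial state, so $S_oy_1=S_oy_2$. Hence $\phi_o(y_1)=\phi_o'(S_oy_1)=\phi_o'(S_oy_2)=\phi_o(y_2)$. Only then does the mild formula give $y_1-y_2=\F_\infty\bigl(\phi_o(y_1)-\phi_o(y_2)\bigr)=0$. This is how the paper concludes. The opposite implication is fine via your reconstruction of $y$.
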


\begin{proof}
If $(x,u,y)$ is a classical solution of~\eqref{eq:BCS}, it is immediate that $(z,u,\tilde y)$ with $z=Sx$ and $\tilde y=S_oy$ is a classical solution of~\eqref{eq:BCSnonlinTrans}. Conversely, let $(z,u,\tilde y)$ be a classical solution of~\eqref{eq:BCSnonlinTrans}. Since $ u + \phi_s'(z)+\phi_o'(\tilde y)\in C(\zinf;U)$, we have that $\AA'z(\cdot)\in C(\zinf;X_0)$, and thus $x:=S\inv z(\cdot)\in C(\zinf;\Dom(\AA))$. We can therefore define $y=\CC x(\cdot)\in C(\zinf;Y)$ and the properties of $\phi_o'$ imply that $(x,u,y)$ is a classical solution of~\eqref{eq:BCS}.

To investigate the generalised solutions, let $(T,\Phi,\Psi,\F)$ be the well-posed linear system associated with $(\BB,\AA,\CC,Q,B_i)$. \cref{lem:BnodeModifications} implies that $(\BB',\AA',S_o\CC',Q,B_i')$ is a well-posed boundary node, and clearly its associated well-posed system is $(T', \Phi',\Psi',\F')$, where $T'(t)=ST(t)S\inv$, $\Phi_t' =S\Phi_t$, $\Psi_t' = S_o \Psi_t S\inv$, and $\F'_t =S_o\F_t$ for all $t\ge0$.
Let $(x,u,y)$ be a generalised solution of~\eqref{eq:BCS}. 
Using \cref{prp:BCSNLsolutions} twice shows that 
if we define
$z=Sx$ and
 $\tilde y=S_o y$,
 then  $(z,u,\tilde y)$ is a generalised solution of~\eqref{eq:BCSnonlinTrans}.
Now let $(z,u,\tilde y)$ be a generalised solution of~\eqref{eq:BCSnonlinTrans}.
\cref{prp:BCSNLsolutions} and our assumptions on $\phi_s$ and $\phi_o$ imply that
if we define $x=S\inv z$ and
 $ y=\Psi_\infty x(0)+\F_\infty (u + \phi_s(x)+\phi_o'(\tilde y))\in \Lploc[2](0,\infty;Y)$, then
$(x,u,y)$ is a generalised solution of~\eqref{eq:BCS}.

To prove the claim regarding uniqueness, assume first that the generalised solutions of~\eqref{eq:BCS} 
 are uniquely determined by $x(0)$ and $u\in \Lploc[2](0,\infty;U)$.
If $(z_1,u,\tilde y_1)$ and $(z_2,u,\tilde y_2)$ are two generalised solutions of~\eqref{eq:BCSnonlinTrans} satisfying $z_1(0)=z_2(0)$, then our analysis above shows that there exist $y_1,y_2\in \Lploc[2](0,\infty;Y)$ such that $(x_1,u, y_1)$ and $(x_2,u, y_2)$ are generalised solutions of~\eqref{eq:BCS} and $z_k=Sx_k$ and $\tilde y_k=S_oy_k$ for $k=1,2$. The uniqueness of solutions of~\eqref{eq:BCS} implies that $x_1=x_2$ and $\tilde y_1=\tilde y_2$.
Conversely, assume that
the generalised solutions of~\eqref{eq:BCSnonlinTrans} 
 are uniquely determined by $z(0)$ and $u\in \Lploc[2](0,\infty;U)$.
Let $(x_1,u, y_1)$ and $(x_2,u, y_2)$ be two generalised solutions of~\eqref{eq:BCS} satisfying $x_1(0)=x_2(0)$.
Then $(z_1,u,\tilde y_1)$ and $(z_2,u,\tilde y_2)$ with $z_k=Sx_k$ and $\tilde y_k=S_oy_k$ for $k=1,2$ are generalised solutions of~\eqref{eq:BCSnonlinTrans}, and the uniqueness of solutions implies that $x_1=x_2$ and $S_oy_1=S_oy_2$.
Since 
$ u_k + \phi_s(x_k)+\phi_o(y_k)\in \Lploc[2](0,\infty;U)$ for $k=1,2$,
and since $\phi_o(y_1)=\phi_o'(S_oy_1)=\phi_o'(S_oy_2)=\phi_o(y_2)$,
 \cref{prp:BCSNLsolutions} implies that
\ieq{
y_1-y_2=\Psi_\infty (x_1(0)-x_2(0))+\F_\infty (\phi_s(x_1)-\phi_s(x_2)+\phi_o(y_1)-\phi_o(y_2)) = 0.
}
This completes the proof.
\end{proof}

\section{Stabilising Controller Design}
\label{sec:AbstractController}

In this section we design a stabilising controller for the system
\begin{subequations}
\label{eq:plant}
\eqn{
\dot x(t) &= \mcA x(t) , \hspace{4.8cm} x(0) = x_0\\
\label{eq:plantBC}
\mcB x(t) &= Q(u(t) + d(t)+\phi_s(x(t))+\phi_o(y(t))) \\
y(t) &= \mcC x(t),
}
\end{subequations}
where $(\mcB,\mcA,\mcC,Q,B_i)$ is a boundary node on the Hilbert spaces $\BNsp{U_b}{U}{X}{Y}$. The input of~\eqref{eq:plant} is corrupted by an unknown external
 disturbance signal $d\in \Lploc[2](0,\infty;U)$ as well as nonlinear dependence on the state and output described by the  unknown functions $\phi_s: X\to U$ and $\phi_o:Y\to U$.
We define the \emph{total disturbance} affecting the input as $\dtot = d+\phi_s(x)+\phi_o(y)$. 
We assume that the boundary conditions~\eqref{eq:plantBC} can be decomposed into homogeneous and controlled part so that
$U_b=U_h\times U$, and 
correspondingly the output can be decomposed so that
$Y=Y_d\times Y_m$ and $y(t)=(y_d(t),y_m(t))\tp\in Y_d\times Y_m$, where
$y_d$ are boundary measurements associated to the controlled part of the boundary and $y_m$ contains the rest of the measured outputs. 
More precisely, we assume that 
 $\BB\in \Lin(\Dom(\AA),U_h\times U)$, $Q\in \Lin(U,U_h\times U)$, and $\CC\in \Lin(\Dom(\AA),Y_d\times Y_m)$ have the forms 
\eqn{
\label{eq:BQCstruct}
\BB=\pmat{\BB_h\\ \BB_d} ,
\qquad
 Q=\pmat{0\\I},
 \quad \mbox{and} \quad
\CC = \pmat{\CC_d\\ \CC_m}.
}
 In this decomposition $\BB_h$ represents those static boundary conditions which can be used for output injections.
The general criterion for the choice of $Y_d$ is that the boundary conditions determined by $\BB_d x(t)=u(t)$ should be replaceable with boundary conditions of the form $\CC_d x(t)=\tilde u(t)$ in the dynamics of~\eqref{eq:plant}. This condition is made more precise in Assumption~\ref{ass:ADRCass}.

The active disturbance rejection controller that we introduce has state
 $x_c(t)=(x_s(t),x_i(t),\hat x(t))\tp$ 
on $X_c=X\times X\times X$ and
  has the general form
\begin{subequations}
\label{eq:ADRController}
\eqn{
\dot{x}_s(t)&= \AA x_s(t) + L_i(\CC x_s(t)-y(t))\\
\dot{x}_i(t)&= (\AA+L_i \CC) x_i(t) \\
\dot{\hat{x}}(t)&= \AA\hat{x}(t) + L_i(\CC\hat{x}(t)-y(t)+u_2(t))\\
\label{eq:ADRCcontrxsBC}
(\BB-L \CC ) x_s(t)&=Q u(t)  -Ly(t)\\
\CC_d x_i(t) &=  y_d(t)-\CC_d x_s(t)\\
(\BB_h-L_h \CC)x_i(t)&=  0\\
\label{eq:ADRCcontrxhatBC}
(\BB- L \CC)\hat{x}(t)&= Q(u(t) + \hat d(t)) +L(u_2(t)-y(t))\\
\hat{d}(t)&= (\BB_d-L_d \CC)x_i(t)\\
u(t) &= \KK \hat x(t) - \hat d(t) + u_1(t)
}
\end{subequations}
with initial states $x_s(0)=x_{s0}$, $x_i(0)=x_{i0}$, and $\hat x(0)=\hat x_0$.
Here $\KK$ is a state feedback gain operator and $L_i$ and $L$ are output injection gain operators.
This controller structure follows the design process of the active disturbance rejection control for boundary controlled PDEs~\cite{FenGuo17b,ZhoWei18} and it
 is illustrated in \cref{fig:ContrSchematic3}. The first part of the controller is the \emph{total disturbance estimator} (TDE) which constructs the asymptotic estimate $\hat d$ of $\dtot = d+\phi_s(x) + \phi_o(y)$. This part of the controller itself consists of the \emph{separator} (with state $x_s$), which prestabilises and removes $u$ from the original linear system before the inversion, and the \emph{inverter} (with state $x_i$) which inverts the input and the output of the system to estimate the unknown part $\dtot$ of the input. 
The \emph{observer} (with state $\hat x$) is a \emph{linear} Luenberger-type observer for the linear part of~\eqref{eq:plant}.
The estimate $\hat d $ is used by the observer to both approximate the unknown part $\dtot$ of the input of~\eqref{eq:plant} and to asymptotically cancel the effect of $\dtot$ at the input of the system.
We note that the controller~\eqref{eq:ADRController} is not always associated with a boundary node and it does not necessarily have well-defined dynamics. Instead,~\eqref{eq:ADRController} should be considered as a \emph{controller with an internal loop} in the sense of~\cite{WeiCur97}. 

\begin{figure}[h!]
\begin{center}
\includegraphics[width=0.7\linewidth]{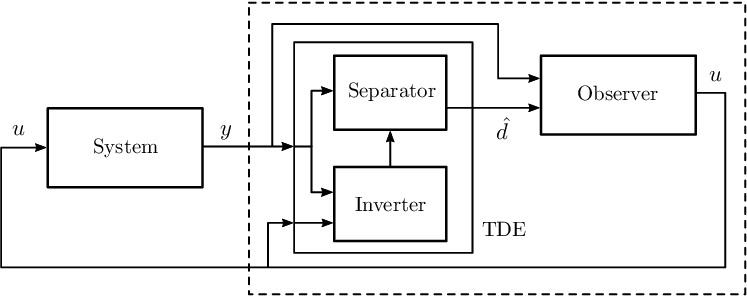}
\caption{The structure of the active disturbance rejection controller.}
\label{fig:ContrSchematic3}
\end{center}
\end{figure}

The closed-loop system consisting of~\eqref{eq:plant} and the controller~\eqref{eq:ADRController} has state 
 $x_e(t)=(x(t),x_s(t),x_i(t),\hat x(t))\tp\in X_e:=X^4$, input  $ u_e(t)= (u_1(t),u_2(t),$ $d(t))\tp \in U_e:=U\times Y\times U$, and output $ y_e(t)= (y(t),\hat y(t),\hat d(t), u_0(t))\tp \in Y_e:=Y\times Y \times U\times U$, where $u_0(t)=\KK \hat x(t)$.  This closed-loop system has the form
\begin{subequations}
\label{eq:ObsCLsysExtIO}
\eqn{
\dot x_e(t) &= \mcA_e x_e(t) + B_{ei}  u_e(t), \hspace{3cm} x_e(0)= x_{e0}\\
\mcB x_e(t) &= Q_e  (u_e(t) + \phi_{es}(x_e(t))+ \phi_{eo}(y_e(t)))\\
 y_e(t) &= \mcC_e x_e(t)
}
\end{subequations}
with $x_{e0}=(x_0,x_{s0},x_{i0},\hat x_0)\tp$.
The boundary conditions~\eqref{eq:plantBC},~\eqref{eq:ADRCcontrxsBC}, and~\eqref{eq:ADRCcontrxhatBC} 
satisfy
\eq{
&\begin{cases}
\mcB x(t) = Q(u(t) + \dtot(t)) \\
(\BB-L \CC ) x_s(t)=Q u(t)  -Ly(t)\\
(\BB- L \CC)\hat{x}(t)= Q(u(t) + \hat d(t)) +L(u_2(t)-y(t))
\end{cases}\\[1ex]
\Leftrightarrow \quad &
\begin{cases}
 (\BB-L \CC ) x(t) -(\BB-L \CC ) x_s(t)  = Q \dtot(t) \\
(\BB-L \CC ) x_s(t)=Q u(t)  -Ly(t)\\
(\BB-L \CC ) x_s(t)-(\BB- L \CC)\hat{x}(t)= -Q \hat d(t) -Lu_2(t).
\end{cases}
}
Thus the operators 
$\mcA_e: \Dom(\mcA_e)\subset X_e\to X_e$,
 $\mcB_e\in \Lin( \Dom(\mcA_e), U_{be})$,
$\mcC_e\in \Lin( \Dom(\mcA_e), Y_e)$, $B_{ei}\in \Lin(U_e, X_e)$, and $Q_e\in \Lin(U_e, U_{be})$ 
in~\eqref{eq:ObsCLsysExtIO} can be defined so that
 $\Dom(\mcA_e)= \setm{(x,x_s,x_i,\hat x)\tp\in\Dom(\mcA)^4}{\CC_d(x-x_s-x_i)=0 }$, 
$U_{be}:=U_b\times U_b\times U_b\times U_h$,
\begin{subequations}
\label{eq:ObsCLsysExtIO_operators}
\eqn{
\mcA_e &= \pmat{\mcA &0&0&0 \\ 
-L_i \mcC & \mcA + L_i \mcC &0&0\\
0 &0& \mcA + L_i \mcC &0\\
-L_i \mcC & 0&0&\mcA + L_i \mcC 
}, \
B_{ei} = \pmat{ 0&0&0\\  0&0&0\\  0&0&0\\0& L_i&0}
\\
\mcB_e &= \pmat{ 
\mcB-L\CC & -\BB+L\CC&0&0 \\
L\mcC & \mcB  - L \mcC&Q(\BB_d-L_d\CC)&- Q\mcK\\0 &\BB-L\CC&Q(\BB_d-L_d\CC)&-\BB+L\CC\\  0&0&\BB_h-L_h\CC&0},\\
\label{eq:ObsCLsysExtIO_operators2}
Q_e &= \pmat{ 0 & 0&Q \\Q &0& 0\\0& -L&0\\ 0&0&0}, \quad 
\mcC_e = \pmat{\mcC & 0&0&0\\ 0&0&0& \mcC \\ 0&0&\BB_d-L_d\CC&0\\ 0&0&0&\KK}.
}
\end{subequations}
The functions $\phi_{es}:X_e\to U_e$ and $\phi_{eo}:Y_e\to U_e$ are defined by
$\phi_{es}(x_e)=(0,0,\phi_s(x))\tp$
and $\phi_{eo}(y_e)=(0,0,\phi_o(y))\tp$ for $x_e=(x,x_s,x_i,\hat x)\tp\in X_e$ and $y_e=(y,\hat y,\hat d,u_0)\tp\in Y_e$.
We make the following assumptions on the parameters of the system and the controller.
\begin{assumption}
\label{ass:ADRCass}
We assume that the parameters of the system~\eqref{eq:plant} and 
$\mcK\in \Lin(\Dom(\mcA),U)$, $L\in \Lin(Y,U_b)$, and $L_i \in \Lin(Y,X)$
have the following properties.
\begin{itemize}
\item[\textup{(a)}]
$(\mcB ,\mcA,  \pmatsmall{\mcC\\ \mcK}, [Q,L], [0,L_i])$ is a well-posed boundary node on $\BNsp{U_b}{U\times Y}{X}{Y\times U}$
 and~\eqref{eq:BQCstruct} hold;
\item[\textup{(b)}]
 $\phi_s:X\to U$ is locally Lipschitz continuous and has linear growth
  and $\phi_o: Y\to U$ is globally Lipschitz continuous;
\item[\textup{(c)}]
$(I-P_K(\cdot))\inv \in H_\infty(\C_{\gb}^+;\Lin(U))$ and $(I-P_L(\cdot))\inv \in H_\infty(\C_{\gb}^+;\Lin(Y))$ 
for some $\gb\in \R$,
where 
 $P_K$ and $P_L$ are the transfer functions of $(\mcB ,\mcA,   \mcK, Q,0)$ and $(\mcB ,\mcA,  \mcC,  L, L_i)$, respectively;
\item[\textup{(d)}]
 $(\BB_h-L_h\CC,\AA_d,\BB_d-L_d\CC,0,0)$ with $\AA_d=(\AA+L_i\CC)\vert_{\ker(\CC_d)}: \ker(\CC_d)\subset X\to X$ 
 is a boundary node on $\BNsp{ U_h}{\set{0}}{X}{U_d}$ with a well-posed output map.

\end{itemize}
\end{assumption}

Our first main result below establishes the existence of unique classical and generalised solutions of the closed-loop system
 and continuous dependence of its state and output on the initial state and input.
 It also introduces conditions for exponential closed-loop stability 
and for the successful estimation and rejection of the unknown disturbance $d\in \Lploc[2](0,\infty;U)$ and the nonlinear effects.
Here we denote $A_K:=\mcA\vert_{\ker(\mcB-Q\mcK)}$, $A_L:=(\mcA+L_i\mcC)\vert_{\ker(\mcB-L\mcC)}$, and $A_d:=(\mcA+L_i\mcC)\vert_{\ker(\mcB_h-L_h\mcC)\cap \ker(\CC_d)}$.
Recall that the notation $L_\gw^2(0,\infty;U)$ was defined in \cref{sec:notation}.

\begin{theorem}
\label{thm:ADRCmain}
Suppose that Assumption~\textup{\ref{ass:ADRCass}} holds.
Then for every $x_{e0}\in X_e$ and $u_e=(u_1,u_2,d)\tp\in \Lploc[2](0,\infty;U_e)$ the system~\eqref{eq:ObsCLsysExtIO} has a unique generalised solution $(x_e,u_e,y_e)$ satisfying $x_e(0)=x_{e0}$. If $x_{e0}\in \Dom(\AA_e)$ and $u_e\in \Hloc{1}(0,\infty;U_e)$ satisfy $\BB_e x_{e0}=Q_e(u_e(0)+\phi_{es}(x_{e0})+\phi_{eo}(\CC_e x_{e0}))$, then $(x_e,u_e,y_e)$ is a classical solution of~\eqref{eq:ObsCLsysExtIO}.
Moreover,
$A_K$, $A_L$, and $A_d$ generate 
 semigroups $T_K$, $T_L$ and $T_d$, respectively, on $X$. 

 If $T_K$, $T_L$, and $T_d$ are exponentially stable
and if $\gw_d>\gw_0(T_d)$ and $\gw, \gw_s>\max \set{\gw_0(T_K),\gw_0(T_L),\gw_0(T_d)}$, then 
there exist constants $M_d,M>0$ such that
for every initial condition $x_{e0}=(x_0,x_{s0},x_{i0},\hat x_0)\tp\in X_e$ and input $u_e=(u_1,u_2,d)\tp\in \Lploc[2](0,\infty;U_e)$ the generalised solution $(x_e,u_e,y_e)$ of~\eqref{eq:ObsCLsysExtIO} has the following properties:

\begin{itemize}
\setlength{\itemindent}{-2ex}
\item[\textup{(a)}]
We have
\ieq{
\displaystyle\norm{\hat d-\dtot}_{L^2_{\gw_d}(0,\infty)} \le M_d \norm{(x_0,x_{s0},x_{i0})\tp}_{X^3};
}
\item[\textup{(b)}] 
If $u_e\in L_{\gw}^2(0,\infty;U_e)$, then $(y,\hat y,u_0)\tp\in L_\gw^2(0,\infty;Y\times Y\times U)$ and
\eq{
\norm{x(t)} + \norm{\hat x(t)} &\le Me^{\gw_s t} \norm{x_{e0}}_{X_e} + Me^{\gw t }\norm{(u_1,u_2)\tp}_{L_{\gw}^2(0,\infty)}, \qquad t\ge0\\
 \norm{(y,\hat y,u_0)\tp}_{L^2_{\gw}(0,\infty)} &\le M \norm{x_{e0}}_{X_e} + \norm{(u_1,u_2)\tp}_{L_{\gw}^2(0,\infty)};
}
\item[\textup{(c)}] 
If $(u_1,u_2)\tp\in L^2(0,\infty;U\times Y)$ and 
 $d\in L^\infty(0,\infty;U)$
or
 $d\in L^2(0,\infty;U)$, 
 then 
\ieq{
\sup_{t\ge0}\; \norm{x_e(t)}_{X_e}<\infty.
}
\end{itemize}
\end{theorem}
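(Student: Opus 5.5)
The plan is to change variables so that the closed-loop system~\eqref{eq:ObsCLsysExtIO} becomes a nonlinear cascade of the type in \cref{prp:BNcascadeNL}, with the nonlinearity entering only the first block through the second block's state. The natural error coordinates are
\[
\varepsilon = x - x_s - x_i, \qquad e = x - \hat x,
\]
together with $x_s$ and $x_i$. The boundary conditions computed before~\eqref{eq:ObsCLsysExtIO_operators} suggest the following structure. The difference $x-x_s$ satisfies $\dot{(x-x_s)}=(\AA+L_i\CC)(x-x_s)$ with $(\BB-L\CC)(x-x_s)=Q\dtot$. Since $\CC_d x_i=\CC_d(x-x_s)$, we get $\CC_d\varepsilon=0$ and $(\BB_h-L_h\CC)\varepsilon=0$, so $\varepsilon$ evolves autonomously under $A_d$. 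The disturbance estimation error is then
\[
\hat d-\dtot=(\BB_d-L_d\CC)x_i-(\BB_d-L_d\CC)(x-x_s)=-(\BB_d-L_d\CC)\varepsilon,
\]
which is exactly the output of the boundary node in Assumption~\ref{ass:ADRCass}(d). The observer error $e$ satisfies $\dot e=(\AA+L_i\CC)e - L_iu_2$ with $(\BB-L\CC)e=Q(\dtot-\hat d)-Lu_2$, so $e$ is driven by the $\varepsilon$-output and by $u_2$. Finally, $x$ satisfies $\BB x=Q(\KK\hat x+u_1+\dtot-\hat d)=Q(\KK x-\KK e+u_1-(\hat d-\dtot))$, i.e.\ a $T_K$-system driven by $e$, $\varepsilon$ and $u_1$. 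After this change of variables the nonlinear terms $\phi_s,\phi_o$ disappear everywhere except inside $x_s$: the separator has $(\BB-L\CC)x_s=Qu-Ly$, with $u=\KK\hat x-\hat d+u_1$ containing $\hat d=\dtot-(\hat d-\dtot)$ and hence $\phi_s(x)+\phi_o(y)$. The dynamics of $(x,\varepsilon,e)$, however, are linear.

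The steps would be the following. First, write $x_e\mapsto(x,\varepsilon,e,x_s)$ via a boundedly invertible $S$ and use \cref{lem:BCSnonlinSolEquivalence} to transfer solution notions. Second, show that the linear triangular block for $(\varepsilon,e,x)$ is a well-posed boundary node by building it as a cascade (\cref{prp:CascWP}). The $\varepsilon$-block is Assumption~\ref{ass:ADRCass}(d), which has only a well-posed output map but no input, and that suffices. The $e$-block is $(\BB-L\CC,\AA+L_i\CC,\CC,\ldots)$, well-posed by \cref{prp:BCSfeedback} applied to (a) using $(I-P_L)^{-1}\in H_\infty$. The $x$-block is $(\BB-Q\KK,\AA,\CC,\ldots)$, well-posed by \cref{prp:BCSfeedback} with (c) on $P_K$, after adjusting outputs via \cref{lem:BnodeModifications}. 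This step also yields that $A_K,A_L,A_d$ generate semigroups. Third, append $x_s$ as the top block driven by $\phi_s(x)+\phi_o(y)$ and linear terms. This is exactly the form~\eqref{eq:CascSysNonlin} with the linear block second. \cref{prp:BNcascadeNL} then gives existence, uniqueness, classical solutions under the compatibility condition, and local Lipschitz dependence. Transforming back gives the first claims.

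For the stability part, (a) follows because $\hat d-\dtot=-(\BB_d-L_d\CC)\varepsilon$ with $\varepsilon(0)=x_0-x_{s0}-x_{i0}$ and $\varepsilon$ governed by $T_d$; the extended output map is bounded into $L^2_{\gw_d}$, as in \cref{prp:BNstability}. For (b), $x$ and $\hat x=x-e$ come from the linear cascade $(\varepsilon,e,x)$. Its growth bound is $\max\{\gw_0(T_K),\gw_0(T_L),\gw_0(T_d)\}$ by \cref{prp:CascWP}, and its inputs are $(u_1,u_2)$ only, with $d$ absent. Applying \cref{prp:BNstability} gives the estimates for $x,\hat x,y,\hat y,u_0$. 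For (c), boundedness of $x$ and $\hat x$ follows from (b) with $\gw<0$. Boundedness of $x_i=x-x_s-\varepsilon$ reduces to boundedness of $x_s$. Here $x_s$ is a $T_L$-system driven by $Qu-Ly$, and $u$ contains $d$, which is bounded in $L^\infty$ or $L^2$, plus $\phi_s(x)$, bounded because $x$ is bounded and $\phi_s$ has linear growth. It also contains $\phi_o(y)$ and $y$ terms, which are in $L^2$ since $y\in L^2$. Then \cref{prp:BNstability} (first/second estimates, split by linearity into $L^2$ and $L^\infty$ parts) gives $\sup_t\norm{x_s(t)}<\infty$.

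The main obstacle is the second step: correctly identifying the transformed boundary operators. In particular, one must verify that the domain condition $\CC_d(x-x_s-x_i)=0$ becomes the clean condition $\varepsilon\in\ker(\CC_d)$, and that the resulting block-triangular tuple genuinely fits \cref{prp:CascWP}, given that the $\varepsilon$-block has trivial input space and $U_d$-valued output. I would first write out $S\AA_eS^{-1}$ and $\BB_eS^{-1}$ explicitly and check each block against the definitions.
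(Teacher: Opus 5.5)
Your proposal is correct and is essentially the paper's proof: the paper uses the same linear coordinates $x$, $x-\hat x$, $x-x_s-x_i$ (its $z_2,z_3,z_4$) and the same chain \cref{prp:BCSfeedback}, \cref{prp:CascWP}, \cref{prp:BNcascadeNL}, \cref{lem:BCSnonlinSolEquivalence}, \cref{prp:BNstability}; the only difference is that its nonlinear block is $z_1=x-x_s$ rather than $x_s$, which satisfies $\dot z_1=(\AA+L_i\CC)z_1$, $(\BB-L\CC)z_1=Q\dtot$ and so fits~\eqref{eq:CascSysNonlin} directly, without the linear couplings $\KK x-\KK e$, $(\BB_d-L_d\CC)\varepsilon$, $\CC x$ and the shared input $u_1$ that your $x_s$-block has to absorb into the Lipschitz coupling. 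In (c), use the linear-cascade estimate with $\gw=0$ (allowed since the growth bounds are negative) rather than $\gw<0$, which would require exponentially decaying $(u_1,u_2)$; also, only $\phi_o(y)-\phi_o(0)$ lies in $L^2$, and the constant $\phi_o(0)$ belongs to the $L^\infty$ part.
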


\begin{proof}
We again denote $\dtot = d+\phi_s(x)+\phi_o(y)$.
We will
complete a state transformation which corresponds to
 rewriting the closed-loop system in the new state variable $z_e(t)=(z_1(t),z_2(t),z_3(t),z_4(t))\tp$, where
 $z_1(t):=x(t)-x_s(t)$, $z_2(t)=x(t)$, $z_3(t)= x(t)-\hat x(t)$, and $z_4(t)= x(t)-x_s(t)-x_i(t)$.
For the Hilbert space $Y_0=Y\times U_b\times (Y\times U)^2\times U$
 we will define 
and operator $S_o\in \Lin(Y_0,Y_e)$, boundedly invertible operators $S\in \Lin(X_e)$ and $S_b\in \Lin(U_{be})$, and a boundary node  
$( \BB_e', \AA_e',  \CC_e',Q_e', B_{ei}')$  on $\BNsp{U_{be}}{U_e}{X_e}{Y_0}$
in such a way that
 $z_e(t)=S x_e(t)$,
 $ \AA_e' = S\AA_e S\inv$ with domain $\Dom( \AA_e')=S( \Dom(\AA_e))$, 
 $ \BB_e' = S_b \BB_e S\inv$,
$ \CC_e=S_o\CC_e' S$,
 $ Q_e'=S_b Q_e$, and
$ B_{ei}'=SB_{ei}$. 
Once we show that 
$( \BB_e', \AA_e',  \CC_e',Q_e', B_{ei}')$ is a well-posed boundary node on $\BNsp{U_{be}}{U_e}{X_e}{Y_0}$,
\cref{lem:BnodeModifications} will imply
that 
$(\mcB_e,\mcA_e,\mcC_e,Q_e,B_{ei})$ is a well-posed boundary node on $\BNsp{U_{be}}{U_e}{X_e}{Y_e}$.

More precisely, we define
 $S_o\in \Lin(Y_0,Y_e)$, $S\in \Lin(X_e)$, and $S_b\in \Lin(U_{be})$ in the following way. Our change of coordinates implies that $Sx_e = (x-x_s,x,x-\hat x,x-x_s-x_i)\tp$ for $x_e=(x,x_s,x_i,\hat x)\tp\in X_e$, and $S\inv z_e=(z_2,z_2-z_1,z_1-z_4,z_2-z_3)\tp$ for $z_e=(z_1,z_2,z_3,z_4)\tp\in X_e$.
We define
 $S_b$
 and $S_o$ so that
\eq{
S_bu & = (u_1,u_2,u_3+u_1,u_4,u_5+u_1,u_6,-u_7-u_1)\tp\\
S_o y&=(y_3,y_3-y_5,-L_d y_{1d}+y_{2d} - y_7,y_4-y_6)\tp
}
for $u=(u_1,u_2,u_3,u_4,u_5,u_6,u_7)\tp\in U_{be}=(U_h\times U)^3\times U_h$ and
$y=( y_1,y_2,y_3,$ $y_4,y_5,y_6,y_7)\tp\in Y_0$ with $y_1=(y_{1h},y_{1d})\tp\in U_h\times U_d$ and $y_2=(y_{2h},y_{2d})\tp\in U_h\times U_d$.
Moreover, we define  
$\CC_e'\in \Lin(\Dom(\AA_e'),Y_0)$, $Q_e'\in \Lin(U_e,U_{be})$, $B_{ei}'\in \Lin(U_e,X_e)$ 
by
$Q_e':=S_bQ_e=Q_e$, $B_{ei}'u_e:=SB_{ei}u_e=(0,0,-L_iu_2,0)\tp$ for $u_e=(u_1,u_2,d)\tp\in U_e$,
and $\CC_e' z_e = (\CC z_1,\BB z_1,\CC z_2, \KK z_2, \CC z_3, \KK z_3,(\BB_d-L_d\CC)z_4)\tp \in Y_0$ for $z_e=(z_1,z_2,z_3,z_4)\tp\in \Dom(\AA_e')=S( \Dom(\AA_e))$.
 To compute $ \AA_e' = S\AA_e S\inv$ with domain $\Dom( \AA_e')=S( \Dom(\AA_e))$
we note that for $z_e =(z_1,z_2,z_3,z_4)\tp\in \Dom(\AA)^4$ we have
\eq{
z_e\in \Dom( \AA_e') \quad \Leftrightarrow \quad &S\inv z_e = (z_2,z_2-z_1,z_1-z_4,z_2-z_3)\tp \in \Dom(\AA_e)\\
\Leftrightarrow \quad & 
\CC_d (z_2-(z_2-z_1)-(z_1-z_4))=0\\
\Leftrightarrow \quad & 
\CC_d z_4=0.
}
A direct computation shows that for  $z_e =(z_1,z_2,z_3,z_4)\tp\in \Dom( \AA_e')$ we have
\eq{
 \AA_e' z_e
&= S \AA_e S\inv z_e 
= S \AA_e (z_2,z_2-z_1,z_1-z_4,z_2-z_3)\tp\\
&= S\pmat{\AA z_2\\\AA z_2-(\AA+L_i\CC)z_1\\(\AA+L_i\CC)(z_1-z_4)\\\AA z_2-(\AA+L_i\CC)z_3}
= \pmat{(\AA+L_i\CC)z_1\\\AA z_2\\(\AA+L_i\CC)z_3\\(\AA+L_i\CC)z_4}.
}
Thus $\AA_e' = \diag (\AA+L_i\CC,\AA,\AA+L_i\CC,\AA+L_i\CC)$ with domain $\Dom(\AA_e' ) = \Dom(\AA)^3\times \ker(\CC_d)$.
Similarly, a direct computation shows that 
$ \BB_e' :=  S_b\BB_e S\inv \in \Lin(\Dom(\AA_e'),U_{be})$ satisfies
\eq{
\MoveEqLeft[.5] \BB_e' z_e
=  S_b\BB_e (z_2,z_2-z_1,z_1-z_4,z_2-z_3)\tp\\
&= S_b\pmat{
(\mcB-L\CC)z_2 + (-\BB+L\CC)(z_2-z_1)\\
L\mcC z_2+ (\mcB- L \mcC)(z_2-z_1)  + Q(\BB_d-L_d\CC) (z_1-z_4)- Q\mcK (z_2-z_3)\\
(\BB-L\CC)(z_2-z_1)+Q(\BB_d-L_d\CC)(z_1-z_4)+ (-\BB+L\CC)(z_2-z_3)\\
(\BB_h-L_h\CC)(z_1-z_4)
}\\
&=\pmat{
\mcB-L\CC &0&0&0 \\
0& \BB-Q\KK& Q\mcK &- Q(\BB_d-L_d\CC) \\
0&0 &\BB-L\CC& -Q(\BB_d-L_d\CC) \\
0&0&0&\BB_h-L_h\CC
}
\pmat{z_1\\z_2\\z_3\\z_4}.
}
Finally,  for $x_e=(x,x_s,x_i,\hat x)\tp\in \Dom(\AA_e)$ we have
\eq{
 S_o \CC_e' S x_e
&=
S_o \CC_e' (x-x_s,x,x-\hat x,x-x_s-x_i)\tp
 = (\CC x, \CC\hat x, (\BB_d-L_d\CC)x_i, \KK\hat x)\tp \\
&= \CC_e x_e
}
and thus $\CC_e= S_o \CC_e' S $,
 as required.

We will now analyse the boundary node property and well-posedness of $( \BB_e', \AA_e',  \CC_e',Q_e', B_{ei}')$ and 
$( \BB_e, \AA_e,  \CC_e,Q_e, B_{ei})$.
By assumption, 
the boundary node $(\mcB ,\mcA,  \pmatsmall{\mcC\\ \mcK}, [Q,L], [0,L_i])$ is well-posed
and we denote its transfer function by $\pmatsmall{P&P_L\\ P_K& P_{KL}}$.
Since 
$(I-P_K(\cdot))\inv \in H_\infty(\C_{\gb}^+;\Lin(U))$ and $(I-P_L(\cdot))\inv \in H_\infty(\C_{\gb}^+;\Lin(Y))$,
applying \cref{prp:BCSfeedback} with the operators $K=\pmatsmall{0&I\\ 0&0}$ and $K=\pmatsmall{0&0\\ I&0}$ shows that $(\BB-Q\KK,\AA,\CC,Q,0)$ and $(\BB-L\CC,\AA+L_i\CC,\pmatsmall{\CC\\\KK},Q,0)$ are well-posed boundary nodes on \BNspStandard\ and $\BNsp{U_b}{U}{X}{Y\times U}$, respectively. In particular, $A_K=\AA\vert_{\ker(\BB-Q\KK)}$ and $A_L=(\AA+L_i\CC)\vert_{\ker(\BB-L\CC)}$ generate strongly continuous $T_K$ and $T_L$, respectively, and $A_d$ generates a semigroup $T_d$ by assumption.
We define 
 $\AA_1: \Dom(\AA_1)\subset X^2\to X^2$, $\BB_1\in \Lin(\Dom(\AA_1),U_b\times U_h)$, $\CC_1\in \Lin(\Dom(\AA_1),Y\times U^2)$, $Q_1\in \Lin(Y,U_b\times U_h)$, and $B_{1i}\in \Lin(Y,X^2)$ by
$\AA_1 = \diag(\AA+L_i\CC,\AA+L_i\CC)$ with domain 
$\Dom(\AA_1)= \Dom(\AA)\times \ker(\CC_d)$,
\eq{
\BB_1 = \pmat{\BB-L\CC& -Q(\BB_d-L_d\CC)\\0&\BB_h-L_h\CC},
\quad
\CC_1 = \pmat{\CC&0\\\KK&0\\ 0&\BB_d-L_d\CC}, 
\quad
Q_1 = \pmat{-L\\0},
}
and
 $B_{1i}=\pmatsmall{-L_i\\0}$.
Moreover, we define $K_1 = \pmat{0,\, -I,\, I}\in \Lin(Y\times U^2,U)$ and define
 $\AA_r: \Dom(\AA_r)\subset X^3\to X^3$, $\BB_r\in \Lin(\Dom(\AA_r),U_b^2\times U_h)$, $\CC_r\in \Lin(\Dom(\AA_r),(Y\times U)^2\times  U)$, 
$Q_r=\diag(Q,Q_1)\in \Lin(U\times Y,U_b^2\times U_h)$, 
$B_{ri}\in \Lin(U\times Y,X^3)$
such that $\AA_r = \diag (\AA,\AA_1)$ with domain $\Dom(\AA_r)=\Dom(\AA)\times \Dom(\AA_1)$, 
\eq{
\BB_r&=
\pmat{\BB-Q\KK&-QK_1\CC_1\\ 0& \BB_1} , \quad
\CC_r=
\pmat{
\CC&0\\ 
\KK&0\\
0&\CC_1
}
, \quad \mbox{and} \quad
B_{ri}=\pmat{0&0\\0&B_{i1}}.
}
Since $(\BB-L\CC,\AA+L_i\CC,\pmatsmall{\CC\\\KK},Q,0)$
and
$(\BB_h-L_h\CC,\AA_d,\BB_d-L_d\CC,0,0)$ with $\AA_d=(\AA+L_i\CC)\vert_{\ker(\CC_d)}: \ker(\CC_d)\subset X\to X$
are well-posed boundary nodes on  $\BNsp{U_b}{Y}{X}{Y\times U}$ and $\BNsp{ U_h}{\set{0}}{X}{U_d}$, respectively,
  \cref{prp:CascWP}
 implies that $(\BB_1,\AA_1,\CC_1,Q_1,B_{1i})$ is a well-posed boundary node on $\BNsp{U_b\times U_h}{U}{X^2}{Y\times U^2}$ and  that the semigroup $T_1$ generated by $\AA_1\vert_{\ker(\BB_1)}$ satisfies $\gw_0(T_1)= \max \set{\gw_0(T_L),\gw_0(T_d)}$.
Since 
$(\BB-Q\KK,\AA,\CC,Q,0)$ is a well-posed boundary node on \BNspStandard,
a second application of \cref{prp:CascWP}
shows that
$(\BB_r,\AA_r,\CC_r,Q_r,B_{ri})$ is a well-posed boundary node on $\BNsp{U_b^2 \times U_h}{U\times Y}{X^3}{(Y\times U)^2 \times U}$ and that
 $A_r=\AA_r\vert_{\ker(\BB_r)}$ generates a semigroup $T_r$ with $\gw_0(T_r)= \max \set{\gw_0(T_K),\gw_0(T_1)}=\max \set{\gw_0(T_K),\gw_0(T_L),\gw_0(T_d)}$.
Finally,
by construction we have $\AA_e'=\diag(\AA+L_i\CC,\AA_r)$ with domain $\Dom(\AA_e')=\Dom(\AA)\times \Dom(\AA_r)$,
\eq{
\BB_e' = \pmat{\BB+L\CC&0\\0&\BB_r}, 
\
\CC_e' = \pmat{\CC_0&0\\0&\CC_r}, 
\
Q_e' = \pmat{0&Q\\Q_r&0},
\
 B_{ei}'=\pmat{0&0\\B_{ri}&0},
}
where  $\CC_0\in \Lin(\Dom(\AA),Y\times U_b)$ is defined 
 by $\CC_0z_1=(\CC z_1,\BB z_1)\tp$ for $z_1\in \Dom(\AA)$.
Since 
$(\BB-L\CC,\AA+L_i\CC,\CC_0,Q,0)$ is a well-posed boundary node by \cref{lem:BnodeModifications}, this structure implies that 
$( \BB_e', \AA_e',  \CC_e',Q_e', B_{ei}')$ is a well-posed boundary node on $\BNsp{U_{be}}{U_e}{X_e}{Y_0}$, and by \cref{lem:BnodeModifications} $( \BB_e, \AA_e,  \CC_e,Q_e, B_{ei})$
is a well-posed boundary node on 
$\BNsp{U_{be}}{U_e}{X_e}{Y_e}$. Moreover, this structure and similarity of $T_e$ and $T_e'$ imply
\eq{
\gw_0(T_e)
=\gw_0(T_e')
= \max \set{\gw_0(T_L),\gw_0(T_r)}
= \max \set{\gw_0(T_K),\gw_0(T_L),\gw_0(T_d)}.
}

We will now show that the closed-loop system has unique classical and generalised solutions for all suitable initial states and inputs.
To this end, define $\phi_{es}'(z_e)=\phi_{es}(S\inv z_e)$ for $z_e\in X_e$ and $\phi_{eo}'(\tilde y_e)=\phi_{eo}(S_o \tilde y_e)$ for $\tilde y_e\in Y_0$.
Let $x_{e0}\in X_e$ and $u_e\in \Lploc[2](0,\infty;U_e)$.
The relationship between $(\BB_e,\AA_e,\CC_e,Q_e,B_{ei})$ and $(\BB_e',\AA_e',\CC_e',Q_e',B_{ei}')$ and 
\cref{lem:BCSnonlinSolEquivalence}
imply that~\eqref{eq:ObsCLsysExtIO} has a unique generalised solution $(x_e,u_e,y_e)$ satisfying $x_e(0)=x_{e0}$ if and only if 
\begin{subequations}
\label{eq:TransCLsys}
\eqn{
\dot{z}_e(t) &=  \AA_e' z_e(t) + B_{ei}' u_e(t)\\
\BB_e' z_e(t) &= Q_e'(u_e(t)+\phi_{es}'( z_e(t))+\phi_{eo}'(\tilde y_e(t)))\\
\tilde y_e(t) &= \CC_e' z_e(t),
}
\end{subequations}
has a unique generalised solution $(z_e,u_e, \tilde y_e)$ satisfying $z_e(0)=Sx_{e0}$ (note that $B_{ei}'\phi_{es}'= 0 $ and $B_{ei}'\phi_{eo}'=0$).
In this situation the solutions are related by $z_e= Sx_e$ and $y_e=S_o\tilde y_e$.
Denote $u_e=(u,d)\tp$ with $u=(u_1,u_2)\tp\in U\times Y$, 
$z_e=(z_1,z_r)\tp$ with $z_r=(z_2,z_3,z_4)\tp$, 
and $\tilde y_e = (y_0,y_r)\tp$ with $y_0\in Y\times U_b$ and $y_r=(y,\hat y,u_0,\hat u_0,e)\tp$.
Then the structures of the operators $\AA_e'$, $\BB_e'$, $\CC_e'$, $Q_e'$, and $B_{ei}'$ and of the functions $\phi_{es}'$ and $\phi_{eo}'$  show that~\eqref{eq:TransCLsys} has the form
\begin{subequations}
\label{eq:TransCLsysSplit}
\eqn{
\ddb{t} \pmat{z_1(t)\\z_r(t)} &=    \pmat{(\AA+L_i\CC) z_1(t) \\\mcA_r z_r(t)+ B_{ri}u(t)}\\
\pmat{(\BB-L\CC) z_1(t)  \\  \BB_r z_r(t)}
&= \pmat{Q(d(t)+\phi_s(K_sx_r(t))+\phi_o(K_oy_r(t)))\\ Q_ru(t)}
\\
\pmat{y_0(t)\\ y_r(t)} &= \pmat{\CC_0 z_1(t)\\ \CC_rz_r(t)},
} 
\end{subequations}
where  $\CC_0\in \Lin(\Dom(\AA),Y\times U_b)$
 is defined by $\CC_0z_1=(\CC z_1,\BB z_1)\tp$ for $z_1\in \Dom(\AA)$ and where
$K_s=[I,0,0]\in \Lin(X^3,X)$ and  $K_o=[I,0,0,0,0]\in \Lin( (Y\times U)^2\times U,Y)$.
We saw above that $(\BB_r,\AA_r,\CC_r,Q_r,B_{ri})$ and $(\BB-L\CC,\AA+L_i\CC,Q,\CC_0,0)$ are well-posed boundary nodes on $\BNsp{U_b^2 }{U\times Y}{X^3}{(Y\times U)^2\times  U}$ and
  $\BNsp{U_b }{U}{X}{Y\times U_b}$, respectively.
Finally,
$x_r\mapsto\phi_s(K_sz_r)$ is locally Lipschitz and has linear growth and $y_r\mapsto\phi_o(K_oy_r)$ is globally Lipschitz.
Therefore \cref{prp:BNcascadeNL} shows that~\eqref{eq:TransCLsys} has a unique generalised solution $(z_e,u_e,\tilde y_e)$ satisfying $z_e(0)=Sx_{e0}$. As argued above,  
 $(x_e,u_e,y_e)$ with $  x_e=S\inv z_e$ and $y_e=S_o\tilde y_e$ 
 is the unique generalised solution of~\eqref{eq:ObsCLsysExtIO} satisfying $x_e(0)=x_{e0}$.
If
 $x_{e0}=(x_0,x_{s0},x_{i0},\hat x_0)\tp\in \Dom(\AA_e)$ and $u_e=(u,d)\tp\in \Hloc{1}(0,\infty;U_e)$ satisfy $\BB_e x_{e0}=Q_e(u_e(0)+\phi_{es}(x_{e0})+\phi_{eo}(\CC_e x_{e0}))$,
then $z_{e0}=(z_{10},z_{20},z_{30},z_{40})\tp:=Sx_{e0}\in \Dom(\AA_e')$ satisfies
$(\BB-L\CC)z_{10}=Q(d(0)+\phi_s(K_sz_{r0})+\phi_o(K_o\CC_r z_{r0}))$ and $\BB_r z_{r0}\tp=Q_ru(0)\tp$ with $z_{r0}=(z_{20},z_{30},z_{40})\tp$, and thus \cref{prp:BNcascadeNL} implies that $(z_e,u_e,\tilde y_e)$ and $(x_e,u_e,y_e)$ are a classical solutions of~\eqref{eq:TransCLsys} and~\eqref{eq:ObsCLsysExtIO}, respectively.

To complete the proof, we will analyse the behaviour of the state trajectories and outputs of the closed-loop system, as well as the error $\dtot-\hat d$ of the disturbance estimation.
For this purpose, we assume that $T_K$, $T_L$, and $T_d$ are exponentially stable.
Our analysis above shows that
\ieq{
\gw_0(T_e)
= \gw_0(T_e')
= \max \set{\gw_0(T_K),\gw_0(T_L),\gw_0(T_d)}
= \gw_0(T_r)
}
and thus $T_e$, $T_e'$, and $T_r$ are exponentially stable.
The boundary conditions~\eqref{eq:plantBC} and~\eqref{eq:ADRCcontrxsBC} imply that for classical solutions $(x_e,u_e,y_e)$ of the closed-loop system~\eqref{eq:ObsCLsysExtIO} with $x_e= (x,x_s,x_i,\hat x)\tp$, $u_e=(u_1,u_2,d)$, and $y_e = (y,\hat y,\hat d,u_0)\tp$ we have
 $\dtot(t)=(\BB_d-L_d\CC)(x(t)-x_s(t))$, $t\ge 0$. Thus 
the error $\dtot-\hat d $ of the disturbance estimation satisfies
\eq{
\dtot(t)-\hat d(t)
&= (\BB_d-L_d\CC)(x(t)-x_s(t)) - (\BB_d-L_d\CC)x_i(t) \\
&= (\BB_d-L_d\CC)z_4(t)
}
for $t\ge0$, where $z_4=T_d(\cdot)(x(0)-x_s(0)-x_i(0))$.
This and our assumptions on $\phi_o$ and $\phi_s$ further imply that 
 $\dtot-\hat d= \Psi^d_\infty (x(0)-x_s(0)-x_i(0))$ also for generalised solutions $(x_e,u_e,y_e)$ of the closed-loop system, 
where $\Psi_\infty^d$ is the extended output map of the well-posed boundary node $(\BB_h-L_h\CC,\AA_d,\BB_d-L_d\CC,0,0)$.
Since $T_d$ is exponentially stable by assumption, for any $\gw_d>\gw_0(T_d)$ we have from \cref{prp:BNstability} that $\norm{\dtot-\hat d}_{L_{\gw_d}^2(0,\infty)}\le M_d \norm{(x(0),x_s(0),x_i(0))\tp}$ for a constant $M_d>0$ independent of $x_e(0)$ and $u_e$.
If $(x_e,u_e,y_e)$ is a generalised solution of the closed-loop system and with $x_e=(x,x_s,x_i,\hat x)\tp$, $u_e=(u_1,u_2,d)\tp$, and $y_e=(y,\hat y,u_0)\tp$, 
then our analysis and the structure of~\eqref{eq:CLsysReduced}
show that $(z_r,u,\tilde y_r)$ with $z_r=(x,x-\hat x,x-x_s-x_i)\tp$, $u=(u_1,u_2)\tp$, and $\tilde y_r=(y,\hat y, u_0)\tp$ is a generalised solutions of 
\begin{subequations}
\label{eq:CLsysReduced}
\eqn{
\dot z_r(t) &= \AA_r z_r(t) + B_{ri}u(t)\\
\BB_r z_r(t) &= Q_r u(t)\\
\tilde y_r(t) &= S_{ro}\CC_r z_r(t),
}
\end{subequations}
where  $S_{ro}\in \Lin((Y\times U)^2\times U,Y^2\times U)$ is defined by $S_{ro}y=(y_1,y_1-y_3,y_2-y_4)\tp$ for $y=(y_1,y_2,y_3,y_4,y_5)\tp\in (Y\times U)\times U$.
Since $(\BB_r,\AA_r,S_{ro}\CC_r,Q_r,B_{ri})$ is a well-posed boundary node by \cref{lem:BnodeModifications} and $T_r$ is exponentially stable, 
the existence of $M>0$ in the estimates for $\norm{x(t)}$, $\norm{\hat x(t)}$, and $\norm{(y,\hat y,u_0)\tp}_{L^2_\gw(0,\infty)}$ in (b) follows from \cref{prp:BNstability}. 
Finally, if 
$(u_1,u_2)\tp=u\in L^2(0,\infty; U\times Y)$,
then \cref{prp:BNstability} shows that $\sup_{t\ge 0}\norm{z_r(t)}<\infty$ and $\tilde y_r=(y,\hat y,u_0)\tp\in L^2(0,\infty;Y^2\times U)$. Since $K_sz_r=x$ and $K_oy_r=y$ in~\eqref{eq:TransCLsysSplit},
our assumptions on $\phi_s$ and $\phi_o$ imply that $\phi_s(K_sz_r)+\phi_o(0)\in L^\infty(0,\infty;U)$ and $\phi_o(K_oy_r)-\phi_o(0)\in L^2(0,\infty;U)$.
The structure of~\eqref{eq:TransCLsysSplit},
the stability of $T_L$, and the well-posedness of $(\BB-L\CC,\AA+L_i\CC , Q,\CC_0,0)$ together
 with Propositions~\ref{prp:WPBCSsolutions} and~\ref{prp:BNstability} imply that if $d\in L^2(0,\infty;U)$ or $d\in L^\infty(0,\infty;U)$, then $\sup_{t\ge 0}\norm{z_1(t)}<\infty$. Thus $\sup_{t\ge 0}\norm{x_e(t)}\le \norm{S\inv}\sup_{t\ge 0}\norm{z_e(t)}<\infty$ as claimed.
\end{proof}

\begin{remark}
\label{rem:CLstatebddcond}
The proof of \cref{thm:ADRCmain} shows that if $\phi_o=0$, we also have
$\sup_{t\ge0}\; \norm{x_e(t)}_{X_e}<\infty$ in (c)
under the alternative assumption 
$(u_1,u_2)\tp\in L^\infty(0,\infty;U\times Y)$.
Indeed, \cref{prp:BNstability} then implies that
$\sup_{t\ge 0}\norm{z_r(t)}<\infty$, 
 $\phi_s(K_sz_r(t))\in L^\infty(0,\infty;U)$, and $\sup_{t\ge 0}\norm{z_1(t)}<\infty$
in the last part of the proof.
\end{remark}

The following result presents additional properties of the closed-loop system.

\begin{proposition}
Suppose that Assumption~\textup{\ref{ass:ADRCass}} holds.
Then $(\mcB_e,\mcA_e,\mcC_e,Q_e,$ $B_{ei})$ is a well-posed boundary node on $\BNsp{U_{be}}{U_e}{X_e}{Y_e}$ and
 the semigroup $T_e$ generated by $A_e:= \mcA_e\vert_{\ker(\mcB_e)}$ satisfies
\ieq{
\gw_0(T_e)=\max \set{\gw_0(T_K),\gw_0(T_L), \gw_0(T_d)}.
}
For every $x_{e0}\in X_e$, $u_e=(u_1,u_2,d)\tp\in \Lploc[2](0,\infty;U_e)$, 
the tuple $(x_e,u_e,y_e)$ is the generalised solution of~\eqref{eq:ObsCLsysExtIO} satisfying $x_e(0)=x_{e0}$ 
in Theorem~\textup{\ref{thm:ADRCmain}}
if and only if 
$x_e\in C(\zinf;X_e)$ and $y_e\in \Lploc[2](0,\infty;Y_e)$
are the unique functions 
satisfying
\begin{subequations}
\label{eq:CLmildsoleq}
\eqn{
x_e(t) &= T_e(t)x_{e0} + \Phi_t^e \Pt (u_e + \phi_{es}(x_e)+ \phi_{eo}(y_e)), \qquad t\ge 0\\
y_e &= \Psi_\infty^e x_{e0} + \F_\infty^e  (u_e + \phi_{es}(x_e)+ \phi_{eo}(y_e)),
}
\end{subequations}
where $(T_e,\Phi^e,\Psi^e,\F^e)$ is the well-posed linear system 
associated to $(\mcB_e,\mcA_e,\mcC_e,$ $Q_e,B_{ei})$.
For every $\tau >0$ and $R>0$ there exists $M_{\tau,R}>0$ such that every pair $(x_e,u_e,y_e)$ and $(x_e',u_e',y_e')$ of  generalised solutions of~\eqref{eq:ObsCLsysExtIO} satisfies
\eq{
\MoveEqLeft\norm{\Pt[\tau](x_e-x_e')}_\infty + 
\norm{\Pt[\tau](y_e-y_e')}_{L^2} \\
&\le 
M_{\tau,R}\left(
\norm{x_e(0)-x_e'(0)}_X + 
\norm{\Pt[\tau](u_e-u_e')}_{L^2} 
  \right) 
}
whenever 
$\norm{x_e(0)}\le R$, 
$\norm{x_e'(0)}\le R$, 
$\norm{\Pt[\tau]u_e}\le R$, 
and $\norm{\Pt[\tau]u_e'}\le R$.
If $\phi_s$ is globally Lipschitz, $M_{\tau,R}$ can be chosen independently of $R>0$.
\end{proposition}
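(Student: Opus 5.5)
The first two claims are already contained in the proof of \cref{thm:ADRCmain}, and I would only extract them. That proof shows $(\BB_e',\AA_e',\CC_e',Q_e',B_{ei}')$ is a well-posed boundary node on $\BNsp{U_{be}}{U_e}{X_e}{Y_0}$, built from the block-diagonal and cascade structure via \cref{prp:CascWP}, \cref{prp:BCSfeedback} and \cref{lem:BnodeModifications}. It also shows that $\BB_e'=S_b\BB_eS\inv$, $\AA_e'=S\AA_eS\inv$, $\CC_e=S_o\CC_e'S$, $Q_e'=S_bQ_e$ and $B_{ei}'=SB_{ei}$ with $S,S_b$ boundedly invertible. \Cref{lem:BnodeModifications}(a) then gives well-posedness of $(\BB_e,S\inv\AA_e'S,\CC_e'S,Q_e,B_{ei})$. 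Part (b) of the same lemma, with output operator $S_o$, gives well-posedness of $(\mcB_e,\mcA_e,\mcC_e,Q_e,B_{ei})$ on $\BNsp{U_{be}}{U_e}{X_e}{Y_e}$. The growth bound identity follows because $T_e=S\inv T_e'S$. Since $T_e'$ is block triangular, \cref{prp:CascWP} yields $\gw_0(T_e')=\max\set{\gw_0(T_L),\gw_0(T_r)}$ and $\gw_0(T_r)=\max\set{\gw_0(T_K),\gw_0(T_L),\gw_0(T_d)}$. None of this uses exponential stability, so it holds under Assumption~\ref{ass:ADRCass} alone.

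For the mild-solution characterisation I would apply \cref{prp:BCSNLsolutions} to the well-posed node $(\mcB_e,\mcA_e,\mcC_e,Q_e,B_{ei})$ with nonlinearities $\phi_{es}$ and $\phi_{eo}$. These inherit the required regularity: $\phi_{es}(x_e)=(0,0,\phi_s(x))\tp$ is locally Lipschitz with linear growth, and $\phi_{eo}$ is globally Lipschitz. \Cref{prp:BCSNLsolutions} gives both directions of the equivalence between generalised solutions and solutions of~\eqref{eq:CLmildsoleq}. The only adjustment is that \cref{prp:BCSNLsolutions} is stated with truncations $\Pt y$, $\Psi_t$ and $\F_t$. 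Rewriting it with the extended maps $\Psi_\infty^e$ and $\F_\infty^e$ is immediate from causality, $\Pt\Psi_\infty^e=\Psi_t^e$ and $\Pt\F_\infty^e=\F_t^e\Pt$. Uniqueness of the pair $(x_e,y_e)$ solving~\eqref{eq:CLmildsoleq} follows from the uniqueness of generalised solutions in \cref{thm:ADRCmain}, because any solution of~\eqref{eq:CLmildsoleq} is a generalised solution.

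For the incremental estimate I would pass to the transformed system~\eqref{eq:TransCLsysSplit}. \Cref{prp:BNcascadeNL} applies to it: the first node is $(\BB_r,\AA_r,\CC_r,Q_r,B_{ri})$, the second is $(\BB-L\CC,\AA+L_i\CC,\CC_0,Q,0)$, and the nonlinearities are $\phi_s(K_s\cdot)$ and $\phi_o(K_o\cdot)$. The system is a cascade in which $z_r$ drives $z_1$, with the roles of indices~$1$ and~$2$ reordered relative to \cref{prp:BNcascadeNL}. Because that proposition has no feed-through term in this configuration, we take $K=0$ there. It then gives, for every $\tau,R'>0$, a constant $M'_{\tau,R'}$ bounding $\norm{\Pt[\tau](z_e-z_e')}_\infty+\norm{\Pt[\tau](\tilde y_e-\tilde y_e')}_{L^2}$ by $\norm{z_e(0)-z_e'(0)}+\norm{\Pt[\tau](u_e-u_e')}_{L^2}$. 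This holds whenever the initial states and inputs have norm at most $R'$.

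Finally I would transfer back using $x_e=S\inv z_e$, $y_e=S_o\tilde y_e$ and $z_e(0)=Sx_e(0)$. The norm bound $R$ on $x_e(0)$ becomes $\norm{S}R$ on $z_e(0)$, so we take $R'=\max\set{1,\norm{S}}R$ and $M_{\tau,R}=\max\set{\norm{S\inv},\norm{S_o}}\norm{S}M'_{\tau,R'}$, up to harmless constants. If $\phi_s$ is globally Lipschitz then so is $\phi_s\circ K_s$, and \cref{prp:BNcascadeNL} gives a constant $M'_{\tau,R'}$ independent of $R'$, hence of $R$. The main obstacle I expect is bookkeeping: checking carefully that~\eqref{eq:TransCLsysSplit} matches the hypotheses of \cref{prp:BNcascadeNL} after reordering the components, including which input enters which node.
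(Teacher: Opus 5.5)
Your proposal is correct and follows the paper's proof essentially step by step. The well-posedness and growth-bound claims are read off from the proof of \cref{thm:ADRCmain}, and the mild-solution characterisation comes from \cref{prp:BCSNLsolutions} together with the uniqueness in \cref{thm:ADRCmain}. The incremental estimate is obtained, as in the paper, by applying \cref{prp:BNcascadeNL} to the transformed system~\eqref{eq:TransCLsys} and transferring back through $S$, $S\inv$ and $S_o$. The only cosmetic point is that $T_e'=\diag(T_L,T_r)$ is in fact block diagonal, and your choice $R'=\max\set{1,\norm{S}}R$ is, if anything, slightly more careful than the paper's bookkeeping.
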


\begin{proof}
The boundary node property and well-posedness of
$(\mcB_e,\mcA_e,\mcC_e,Q_e,$ $B_{ei})$ as well as the identity
$\gw_0(T_e)=\max \set{\gw_0(T_K),\gw_0(T_L), \gw_0(T_d)}$ were established in the proof of \cref{thm:ADRCmain}. 
To prove the second part we note that \cref{prp:BCSNLsolutions} implies that
the tuple $(x_e,u_e,y_e)$ is a generalised solution of~\eqref{eq:ObsCLsysExtIO} satisfying $x_e(0)=x_{e0}$ 
if and only if 
$x_e\in C(\zinf;X_e)$ and $y_e\in \Lploc[2](0,\infty;Y_e)$
are such that~\eqref{eq:CLmildsoleq} hold. Since the generalised solution of~\eqref{eq:ObsCLsysExtIO} is unique by \cref{thm:ADRCmain}, equations~\eqref{eq:CLmildsoleq} can be satisfied for exactly one pair
$x_e\in C(\zinf;X_e)$ and $y_e\in \Lploc[2](0,\infty;Y_e)$ of functions.

Let $R,\tau>0$  and let $(x_e,u_e, y_e)$ and $(x_e',u_e', y_e')$ be two generalised solutions
of~\eqref{eq:ObsCLsysExtIO} satisfying 
$\norm{x_e(0)}\le R$, 
$\norm{x_e'(0)}\le R$, 
$\norm{\Pt[\tau]u_e}\le R$, 
and $\norm{\Pt[\tau]u_e'}\le R$.
The arguments in the proof of \cref{thm:ADRCmain} imply that 
 the system~\eqref{eq:TransCLsys} has generalised solutions 
$(z_e,u_e,\tilde y_e)$ and $(z_e',u_e',\tilde y_e')$
such that $z_e=Sx_e$, $z_e'=S_ex_e'$, $y_e=S_o\tilde y_e$, and $y_e'=S_o\tilde y_e'$.
Note that $\norm{S}\le 3$ and $\norm{S\inv}\le 2$.
Since
$\norm{z_e(0)}\le \norm{S} R\le 3R$ and
$\norm{z_e'(0)}\le \norm{S}R\le 3R$, the 
arguments in the proof of \cref{thm:ADRCmain}  and \cref{prp:BNcascadeNL} imply that there exists $M_{\tau,R}'>0$ depending on $\tau$ and $R$ such that
\eq{
\MoveEqLeft\norm{\Pt[\tau](x_e-x_e')}_\infty + 
\norm{\Pt[\tau]( y_e- y_e')}_{L^2} \\
& \le \norm{S\inv}\norm{\Pt[\tau](z_e-z_e')}_\infty + 
\norm{S_o}\norm{\Pt[\tau](\tilde y_e-\tilde y_e')}_{L^2} \\
&\le M_{\tau,R}'\max \set{3,\norm{S_o}}\left( \norm{x_e(0)-x_e'(0)}_X + \norm{\Pt[\tau](u_e-u_e')}_{L^2} \right) .
}
This estimate implies the claim with $M_{\tau,R}=M_{\tau,R}'\max \set{3,\norm{S_o}}$.
Moreover, if $\phi_s$ is globally Lipschitz,  $M_{\tau,R}'$ (and thus also $M_{\tau,R}$) can be chosen independently of $R>0$. 
\end{proof}

\begin{remark}
\label{rem:Adsyssuffcond}
Here we introduce a sufficient condition for 
Assumption~\ref{ass:ADRCass}(d).
Assume that $(\mcB ,\mcA,  \mcC, I,0)$ is a well-posed boundary node on $\BNsp{U_b}{U_b}{X}{Y}$ with $U_b=U_h\times U$ and $Y=U\times U_h$ satisfying
$\re \iprod{\AA x}{x}_X \le \re \iprod{\BB x}{\pmatsmall{0&I\\I&0}\CC x}_{U_b} $ for 
$ x\in \Dom(\AA).  $
The transfer function of $(\mcB ,\mcA,  \mcC, I,0)$ has the form
 $ \pmatsmall{P_{dh}&P_{dd}\\ P_{mh}& P_{md}}$.
If 
 $P_{dd}(\cdot)\inv\in H_{\infty}(\C_\gb^+;\Lin(U,Y_d))$ for some $\gb>0$,
if $L=\pmatsmall{0&L_0\\ 0&0}$ with $L_0\le 0$, and if $L_i=0$,
then~\citel{NicPau25}{Prop.~2.8 \& Rem.~2.9}, \cref{lem:IPBCSWP}, \citel{Pau19}{Lem.~A.2}, and \cref{prp:BCSfeedback} can be used   to show that
$(\BB_h-L_h\CC,\AA_d,\BB_d-L_d\CC,0,0)$ 
 is a boundary node on $\BNsp{ U_h}{\set{0}}{X}{U}$ with a well-posed output map.
\end{remark}

There are important systems
 for which the inverse system $(\BB_h-L_h\CC,\AA_d,$ $\BB_d-L_d\CC,0,0)$ does not have a well-posed output map
 but which can still be controlled using active disturbance rejection.
The following result shows that  
 Assumption~\ref{ass:ADRCass}(d)
can be replaced with a slightly more complicated condition which can nevertheless be verified especially for many parabolic PDEs.
If Assumption~\ref{ass:ADRCass}(d) fails, $\hat d$ does not
 correspond to a  well-posed output map of the closed-loop system.
We again denote $A_d=(\mcA+L_i\mcC)\vert_{\ker(\mcB_h-L_h\mcC)\cap \ker(\CC_d)}$ with domain  $\Dom(A_d)=\ker(\mcB_h-L_h\mcC)\cap \ker(\CC_d)$.

\begin{theorem}
\label{thm:ADRCParab}
Suppose that parts \textup{(a)--(c)} of Assumption~\textup{\ref{ass:ADRCass}} are satisfied, 
that $(\BB_1,\AA_1,\CC_1,0,0)$ with 
$\Dom(\AA_1)=\Dom(\AA)\times \ker(\CC_d)$,
\eq{
\AA_1= \pmat{\AA&0\\0&\AA+L_i\CC}, \quad
\BB_1= \pmat{\BB&-Q(\BB_d-L_d\CC)\\0&\BB_h-L_h\CC},
 \quad
\CC_1 = \pmat{\CC&0\\\KK&0},
}
is a boundary node on $\BNsp{U_b\times U_h}{\set{0}}{X^2}{Y\times U}$ with a well-posed output map, and that $\gl \mapsto \norm{H(\gl)}\norm{(\BB_d-L_d\CC)(\gl-A_d)\inv}$ is uniformly bounded on $\C_\gb^+$ for some $\gb\in \R$.
Then the claims regarding existence and uniqueness of solutions in Theorem~\textup{\ref{thm:ADRCmain}} hold for the closed-loop system with a modified output $y_{e0}=(y,\hat y, u_0)\tp$, and
 $A_K$, $A_L$, and $A_d$ generate
 semigroups $T_K$, $T_L$, and $T_d$, respectively, on $X$. 
If $T_K$, $T_L$, and $T_d$ are exponentially stable, then for every
 $\gw, \gw_s>\max \set{\gw_0(T_K),\gw_0(T_L),\gw_0(T_d)}$ 
there exists a constant $M>0$ such that
\textup{(b)} and \textup{(c)} in Theorem~\textup{\ref{thm:ADRCmain}} hold.
\end{theorem}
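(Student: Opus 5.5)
We will follow the proof of \cref{thm:ADRCmain}: apply the change of variables $z_e=Sx_e$ with $z_1=x-x_s$, $z_2=x$, $z_3=x-\hat x$, $z_4=x-x_s-x_i$. This transforms the closed-loop system into the triangular form \eqref{eq:TransCLsysSplit}. The difference is that we no longer have Assumption~\ref{ass:ADRCass}(d). Hence we cannot treat $(\BB_h-L_h\CC,\AA_d,\BB_d-L_d\CC,0,0)$ as an independent well-posed block whose output $\hat d-\dtot$ is fed into the $z_2,z_3$ dynamics via the boundary operator $\BB_r$. Instead we regroup the state. The inverter error $z_4$ enters the boundary conditions of $z_2$ only through $Q\KK z_3-Q(\BB_d-L_d\CC)z_4$, and the boundary condition of $z_3$ only through $-Q(\BB_d-L_d\CC)z_4$. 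We therefore pair $z_4$ with the states it drives, treating it as an unobserved (output-free) component.

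Concretely, we first note that $A_K$ and $A_L$ generate semigroups exactly as in the proof of \cref{thm:ADRCmain} (\cref{prp:BCSfeedback} applied to Assumption~\ref{ass:ADRCass}(a),(c)). $A_d$ generates a semigroup because it is the restriction of $\AA_1$ to the second component of $\ker(\BB_1)$, and the hypothesis gives a boundary node. Next we form the subsystem for $(z_3,z_4)$ (and similarly $(z_2,z_3,z_4)$) with state operator $\diag(\AA+L_i\CC,\AA+L_i\CC)$, upper triangular boundary operator $\pmatsmall{\BB-L\CC & -Q(\BB_d-L_d\CC)\\ 0&\BB_h-L_h\CC}$, and output $(\CC z_3,\KK z_3)$. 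This is precisely the node $(\BB_1,\AA_1,\CC_1,0,0)$ of the statement, up to the output injection $L_i\CC$, $L\CC$ on the first component. The output injection is then handled by the feedback result \cref{prp:BCSfeedback} together with Assumption~\ref{ass:ADRCass}(c), and the input $u_2$ enters through the well-posed node of Assumption~\ref{ass:ADRCass}(a). This yields a well-posed boundary node for $z_r=(z_2,z_3,z_4)$ with output $(y,\hat y,u_0)$, with $z_4$ only in the state and not in the output.

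The key step, and the main obstacle, is the semigroup generation and growth bound for this upper triangular block. The coupling term $Q(\BB_d-L_d\CC)z_4$ is not an admissible output of $T_d$, so \cref{prp:CascWP} does not apply. We would write the resolvent of the triangular generator explicitly. For $\gl\in\C_\gb^+$ the off-diagonal entry is $H(\gl)Q(\BB_d-L_d\CC)(\gl-A_d)\inv$, with $H$ the relevant transfer function of the diagonal block. The uniform bound on $\norm{H(\gl)}\norm{(\BB_d-L_d\CC)(\gl-A_d)\inv}$ shows that this resolvent is uniformly bounded on right half-planes beyond the growth bounds. Combined with the well-posed output map of $(\BB_1,\AA_1,\CC_1,0,0)$, a Hilbert space Gearhart--Prüss/Plancherel type argument should then give exponential stability of the block whenever $T_K,T_L,T_d$ are, with $\gw_0\le\max\{\gw_0(T_K),\gw_0(T_L),\gw_0(T_d)\}$. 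Generation itself would be established by a perturbation/similarity argument that moves the boundary coupling into the domain, using $\BB^r$.

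With this well-posed subsystem in hand, existence and uniqueness of generalised and classical solutions with output $y_{e0}$ follow as in \cref{thm:ADRCmain}. We apply \cref{prp:BNcascadeNL} to the cascade of the $z_r$-node with the $z_1$-node $(\BB-L\CC,\AA+L_i\CC,\CC_0,Q,0)$, where the nonlinearities depend only on $z_2=x$ and $y$, and then use \cref{lem:BCSnonlinSolEquivalence}. The estimates (b) and (c) follow verbatim from \cref{prp:BNstability} applied to the exponentially stable $z_r$-node and then to the $z_1$-node. Part (a) is simply dropped, since $\hat d$ is no longer an output.
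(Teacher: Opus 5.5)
Your architecture is the paper's. You use the same coordinates $z_e=Sx_e$ and keep $z_4$ as an output-free component of the $z_r$-block. You assemble the open-loop $z_r$-node from $(\BB_1,\AA_1,\CC_1,0,0)$ and the node of Assumption~\ref{ass:ADRCass}(a), close the loop with \cref{prp:BCSfeedback}, and finish with the cascade with the $z_1$-node. The feedback step works because $I-KP_{r0}$ is block upper triangular with diagonal blocks $I-P_K$ and $I-P_L$, so (c) suffices. The existence and uniqueness part is therefore fine. Generation of $T_r$ already comes out of \cref{prp:BCSfeedback}, so no separate perturbation argument is needed.

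The gap is in the step you yourself call the main obstacle: the bound $\gw_0(T_r)\le\max\set{\gw_0(T_K),\gw_0(T_L),\gw_0(T_d)}$, on which (b) and (c) rest. Your claim that the hypothesis on $\norm{H(\gl)}\norm{(\BB_d-L_d\CC)(\gl-A_d)\inv}$ makes the resolvent uniformly bounded on $\C_{\gw}^+$ does not follow as stated, for two reasons.

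Write $\BB_{dL}=\BB_d-L_d\CC$ and $R_d(\gl)=(\gl-A_d)\inv$, and let $R_K$, $R_L$ be the resolvents of $A_K$, $A_L$.

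\emph{First}, $H(\gl)Q\BB_{dL}R_d(\gl)$ is the coupling entry of $(\gl-A_1)\inv$, i.e.\ of the block \emph{before} feedback. The resolvent of $A_r=\AA_r\vert_{\ker(\BB_r)}$ instead has the entries $H_K(\gl)(I-P_{KLQ}(\gl))\BB_{dL}R_d(\gl)$ and $H_{LQ}(\gl)\BB_{dL}R_d(\gl)$, besides $-H_K(\gl)\KK R_L(\gl)$. These are built from the transfer functions of $(\BB-Q\KK,\AA,\CC,Q,0)$ and $(\BB-L\CC,\AA+L_i\CC,\KK,Q,0)$, about which the hypothesis says nothing directly.

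\emph{Second}, the hypothesis holds only on $\C_{\gb}^+$. Gearhart--Pr\"uss needs bounds on $\C_{\gw}^+$ for every $\gw$ above the three growth bounds, typically with $\gw<\gb$. In the intended examples $A$ is not even stable ($0\in\gs(A)$ for the Neumann heat equation), so $H$ is not defined on all of $\C_{\gw}^+$.

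The missing argument runs as follows.
\begin{enumerate}
\item Fix $\gd>0$ with $\gw+\gd>\gb$, $\C_{\gw+\gd}^+\subset\rho(A)$, and the bounds of (c) valid there.
\item Use the identities $H_K(\gl)=[I+\gd R_K(\gl)]H_K(\gl+\gd)$, $H_{LQ}(\gl)=[I+\gd R_L(\gl)]H_{LQ}(\gl+\gd)$ and $\BB_{dL}R_d(\gl)=\BB_{dL}R_d(\gl+\gd)[I+\gd R_d(\gl)]$. Their outer factors are bounded on $\C_{\gw}^+$, so everything reduces to $\C_{\gw+\gd}^+$.
\item On $\C_{\gw+\gd}^+$, the formulas $H_K=H(I-P_K)\inv$ and $H_{LQ}=H+H_L(I-P_L)\inv P$, with $H_L$, $P_L$ from $(\BB,\AA,\CC,L,L_i)$, bring in the hypothesis.
\item The remaining factor $P(\mu)\BB_{dL}R_d(\mu)$ is controlled by the well-posed output map of $(\BB_1,\AA_1,\CC_1,0,0)$. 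This is because $P(\mu)\BB_{dL}R_d(\mu)x_2$ is the first component of $\CC_1(\mu-A_1)\inv(0,x_2)$.
\end{enumerate}
Only after these steps does Gearhart--Pr\"uss give the required growth bound.
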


\begin{proof}
To compensate for the modified output $y_{e0}=(y,\hat y,u_0)\tp$ in the closed-loop system we redefine $Y_e=Y^2\times U$ and $Y_0=Y\times U_b\times (Y\times U)^2 $,
and redefine 
$\CC_e\in \Lin(\Dom(\AA_e),Y_e)$,
$\CC_e'\in \Lin(\Dom(\AA_e'),Y_0)$ and $S_o\in \Lin(Y_0,Y_e)$ in the proof of \cref{thm:ADRCmain} by
$\CC_e x_e=(\CC x,\CC\hat x,\KK \hat x)\tp  \in Y_e$,
$\CC_e' z_e = (\CC z_1,\BB z_1,\CC z_2, \KK z_2, \CC z_3,$ $ \KK z_3)\tp \in Y_0$, and
$S_o y=(y_3,y_3-y_5,y_4-y_6)\tp\in Y_e$
 for $x_e=(x,x_s,x_i,\hat x)\tp\in \Dom(\AA_e)$,
$z_e=(z_1,z_2,z_3,z_4)\tp\in \Dom(\AA_e')=S( \Dom(\AA_e))$, 
and 
$y=( y_1,y_2,y_3,y_4,y_5,y_6)\tp\in Y_0$.
These operators satisfy $\CC_e=S_o\CC_e'S$.

Part (d) of Assumption~\ref{ass:ADRCass} was used in the proof of \cref{thm:ADRCmain} only in showing that 
$(\BB_r,\AA_r,\CC_r,Q_r,B_{ri})$ is a well-posed boundary node, in computing the growth bound $\gw_0(T_r)$ of the semigroup $T_r$ generated by $A_r:=\AA_r\vert_{\ker(\BB_r)}$, and in analysing $\dtot-\hat d$ (which is not required here).
We will redo the first two of these steps under the current assumptions
for the modified system corresponding to our new closed-loop output.
More precisely, we define
$\AA_r: \Dom(\AA_r)\subset X^3\to X^3$,
 $\BB_r\in \Lin(\Dom(\AA_r),U_b^2\times U_h)$, $\CC_r\in \Lin(\Dom(\AA_r),(Y\times U)^2)$, $B_{ri}\in \Lin(U\times Y,X^3)$, 
such that $\AA_r = \diag (\AA,\AA+L_i\CC,\AA+L_i\CC)$ with domain  $\Dom(\AA_r)=\Dom(\AA)\times \Dom(\AA)\times \ker(\CC_d)$,
\eq{
\BB_r&=
\pmat{\BB-Q\KK&Q\KK&-Q(\BB_d-L_d\CC)\\ 0&\BB-L\CC& -Q(\BB_d-L_d\CC)\\ 0&0&\BB_h-L_h\CC} , 
\quad
Q_r=\pmat{Q&0\\0&-L\\0&0},
}
$B_{ri}u=(0,-L_iu_2,0)\tp$ for $u=(u_1,u_2)\tp\in U\times Y$,
and $\CC_rz_r=(\CC z_2,\KK z_2,\CC z_3,\KK z_3)$ for $z_r=(z_2,z_3,z_4)\tp\in \Dom(\AA_r)$.
To prove well-posedness of $(\BB_r,\AA_r,\CC_r,Q_r,B_{ri})$, we further define
$\AA_{r0}: \Dom(\AA_{r0})\subset X^3\to X^3$ and
 $\BB_{r0}\in \Lin(\Dom(\AA_{r0}),U_b^2\times U_h)$ 
such that $\AA_{r0} = \diag (\AA,\AA,\AA+L_i\CC)$ with domain $\Dom(\AA_{r0})=\Dom(\AA_r)$ and
\eq{
\BB_{r0}&=
\pmat{\BB&0&-Q(\BB_d-L_d\CC)\\ 0&\BB& -Q(\BB_d-L_d\CC)\\ 0&0&\BB_h-L_h\CC} .
}
It is straightforward to use  Remark~\ref{rem:WPtestclass}, the well-posedness of  $(\BB_1,\AA_1,\CC_1,0,0)$ and $(\BB,\AA,\pmatsmall{\CC\\\KK},[Q,L],[0,L_i])$ and linearity
to show that 
  $(\BB_{r0},\AA_{r0},\CC_r,Q_r,B_{ri})$ is a well-posed boundary node on $\BNsp{U_b^2\times U_h}{U\times Y}{X^3}{(Y\times U)^2}$.
Denote the transfer function of 
$(\BB,\AA,\pmatsmall{\CC\\\KK},[Q,L],[0,L_i])$ by $\pmatsmall{P&P_L\\ P_K& P_{KL}}$. 
If we define $K\in \Lin( (U\times Y)^2,U\times Y)$ by $Ky=(y_2-y_4,-y_3)\tp$ for $y=(y_1,y_2,y_3,y_4)\tp\in (U\times Y)^2$, then $\AA_r=\AA_{r0}+B_iK\CC_r$ and $\BB_r=\BB_{r0}-QK\CC_r$.
Moreover, the transfer function $P_{r0}$ of $(\BB_{r0},\AA_{r0},\CC_r,Q_r,B_{ri})$ satisfies 
\eq{
I-K P_{r0}(\gl) 
= \pmat{I-P_{K}(\gl)&P_{KL}(\gl)\\0& I-P_L(\gl)}
}
for $\gl\in \rho(A)\cap \rho(A_d)$. Thus our assumptions on $P_K$ and $P_L$,~\citel{FkiPau26arxiv}{Rem.~2.6}, and \cref{prp:BCSfeedback} imply that 
$(\BB_r,\AA_r,\CC_r,Q_r,B_{ri})$ is a well-posed boundary node on $\BNsp{U_b^2\times U_h}{U\times Y}{X^3}{(Y\times U)^2}$.
We will now show that 
$\gw_0(T_r)= \max \set{\gw_0(T_K),\gw_0(T_L),\gw_0(T_d)}$.
Similarly as in the proof of~\citel{FkiPau26arxiv}{Prop.~2.11}, the resolvent operator of $A_r= \AA_r\vert_{\ker(\BB_r)}$ is given by
\eq{
(\gl-A_r)\inv = \pmat{R_K(\gl) \hspace{-.8ex}& -H_K(\gl)\KK R_L(\gl)& H_K(\gl)(I-P_{KLQ}(\gl)) \BB_{dL} R_d(\gl) \\ 0& R_L(\gl) &  H_{LQ}(\gl) \BB_{dL} R_d(\gl)  \\ 0&0& R_d(\gl)}
}
for $\gl\in \rho(A_K)\cap \rho(A_L)\cap \rho(A_d)$,
where we have denoted $\BB_{dL}=\BB_d-L_d\CC$,
$R_K(\gl)=(\gl-A_K)\inv $,
$R_L(\gl)=(\gl-A_L)\inv $,
$R_d(\gl)=(\gl-A_d)\inv $,
 where
$H_K$ is the transfer function of the well-posed boundary node $(\BB-Q\KK,\AA,\CC,Q,0)$, and 
where 
$H_{LQ}$ and $P_{KLQ}$ are the transfer functions of the well-posed boundary node $(\BB-L\CC,\AA+L_i\CC,\KK,Q,0)$.
This structure and the Gearhart--Pr\"uss--Greiner theorem immediately imply that 
$\gw_0(T_r)\ge \max \set{\gw_0(T_K),\gw_0(T_L),\gw_0(T_d)}$.
To show the converse estimate, we will show that for any
$\gw> \max \set{\gw_0(T_K),\gw_0(T_L),\gw_0(T_d)}$ the norms $\norm{(\gl-A_r)\inv}$ are uniformly bounded with respect to $\gl\in\C_\gw^+$.
In the block operator the norms $\norm{R_K(\gl)}$, $\norm{R_L(\gl)}$,  $\norm{R_d(\gl)}$,   $\norm{H_K(\gl)}$, $\norm{\KK R_L(\gl)}$, and $\norm{P_{KLQ}(\gl)}$ are uniformly bounded with respect to $\gl\in\C_\gw^+$ by~\citel{FkiPau26arxiv}{Rem.~2.6}.
Thus it suffices to show that 
$ \norm{H_K(\gl)}\norm{\BB_{dL}R_d(\gl)}$ and
$ \norm{H_{LQ}(\gl)}\norm{\BB_{dL}R_d(\gl)}$ are uniformly bounded with respect to $\gl\in \C_\gw^+$.
Let $\gd>0$ be such that $\C_{\gw+\gd}^+\subset \rho(A)$, 
$\gw+\gd>\gb$, 
 $(I-P_K(\cdot))\inv \in H_{\infty}(\C_{\gw+\gd}^+;\Lin(U))$, and 
 $(I-P_L(\cdot))\inv \in H_{\infty}(\C_{\gw+\gd}^+;\Lin(Y))$.
The resolvent identity and~\citel{NicPau25}{Prop.~2.4(iii)} imply%
\eq{
H_K(\gl)\BB_{dL}R_d(\gl)
= [I+\gd R_K(\gl)]H_K(\gl+\gd)\BB_{dL}R_d(\gl+\gd)[I+\gd R_d(\gl)]
}
for all $\gl\in \C_\gw^+$. 
The arguments in the proof of~\citel{NicPau25}{Prop.~2.6} show that $H_K(\gl+\gd)=H(\gl+\gd)(I-P_K(\gl+\gd))\inv$,
 and our assumptions imply that 
$ \norm{H_K(\gl)}\norm{\BB_{dL}R_d(\gl)}$ is indeed uniformly bounded with respect to $\gl\in \C_\gw^+$.
Analogously, 
uniform boundedness of 
$ \norm{H_{LQ}(\gl)}\norm{\BB_{dL}R_d(\gl)}$
reduces to the uniform boundedness of 
$ \norm{H_{LQ}(\gl+\gd)}\norm{\BB_{dL}R_d(\gl+\gd)}$, and similar arguments as in the proof of~\citel{NicPau25}{Prop.~2.6} show that $H_{LQ}(\mu)=H(\mu)+H_L(\mu)(I-P_L(\mu))\inv P(\mu)$ for $\mu\in \C_{\gw+\gd}^+$, where $H_L$ and $P_L$ are the transfer functions of $(\BB,\AA,\CC,L,L_i)$.
This together with our assumptions finally implies that $\norm{(\gl-A_r)\inv}$ is uniformly bounded with respect to $\gl\in\C_\gw^+$ and completes the proof that
$\gw_0(T_r)= \max \set{\gw_0(T_K),\gw_0(T_L),\gw_0(T_d)}$.

Since 
$(\BB-L\CC,\AA+L_i\CC,\KK,Q,0)$ is a well-posed boundary node, the arguments in the proof of \cref{thm:ADRCmain} now show that
$( \BB_e', \AA_e',  \CC_e',Q_e', B_{ei}')$ is a well-posed boundary node on $\BNsp{U_{be}}{U_e}{X_e}{Y_0}$,
 and by \cref{lem:BnodeModifications} $( \BB_e, \AA_e,  \CC_e,Q_e, B_{ei})$
is a well-posed boundary node on 
$\BNsp{U_{be}}{U_e}{X_e}{Y_e}$. Moreover, this structure and similarity of $T_e$ and $T_e'$ imply 
\ieq{
\gw_0(T_e)
=\gw_0(T_e')
= \max \set{\gw_0(T_L),\gw_0(T_r)}
= \max \set{\gw_0(T_K),\gw_0(T_L),\gw_0(T_d)}.
}
The rest of the arguments in the proof (without the analysis of $\dtot -\hat d$) establish existence and uniqueness of the solutions of the closed-loop system with output $(y,\hat y,u_0)$ and the estimates in parts (b) and (c) of \cref{thm:ADRCmain} for generalised solutions.
\end{proof}

\section{Disturbance Rejection and Stabilisation for PDE Systems}
\label{sec:PDEs}

\subsection{A Wave Equation}
We consider disturbance rejection and stabilisation for 
an undamped wave equation on the interval $(0,1)$, 
\begin{subequations}
\label{eq:Wave1D}
\eqn{
\ddot w(\xi,t) &= w''(\xi,t)\\
w'(0,t) &= 0 , \quad  w'(1,t) = u(t)+d(t)+\phi(\dot w(0,t))\\
y(t) &= \pmat{\dot w(1,t)\\ \dot w(0,t)}
}
\end{subequations}
for $\xi\in (0,1)$ and $t>0$
with initial conditions $w(\cdot,0)=  w_0\in H^1(0,1)$ and $\dot w(\cdot,0) = w_1\in L^2(0,1)$. Here $d\in \Lploc[2](0,\infty)$ is an unknown input disturbance and the term $\phi(\dot w(0,t))$ with an unknown globally Lipschitz function $\phi:\C\to \C$ represents an unmodeled nonlinearity.
Our aim is to reject the unknown boundary disturbance, compensate for the nonlinearity, and achieve exponential convergence of the energy of the solutions to zero. 
Active disturbance rejection for one-dimensional wave equations has been previously considered in~\cite{GuoJin13,GuoJin15,FenGuo17b,ZhoWei18,MeiZho21}.
We formulate~\eqref{eq:Wave1D} as a boundary control system of the form~\eqref{eq:plant} on $X=\Lp[2](0,1)\times L^2(0,1)$ with the state variable 
$x(t)=(w'(\cdot,t),\dot w(\cdot,t))\tp$.
When we choose the norm 
$\norm{\cdot}_X$ on $X$ to be the standard Hilbert space norm, the total energy of the 
 solution of the wave equation is  $E(t)=\frac12\norm{x(t)}_X^2$, $t\ge 0$.
We let $U=\C$, $U_b=\C^2$, and $Y=\C^2$ and
define  $\AA: \Dom(\AA)\subset X\to X$ with domain $\Dom(\AA)=H^1(0,1)\times H^1(0,1)$, $\BB\in \Lin(\Dom(\AA),U_b)$ and $\CC\in \Lin(\Dom(\AA),Y)$ by
\eq{
\AA x=\pmat{g'\\ f'}, \qquad \BB x= \pmat{-f(0)\\ f(1)}, \qquad \CC x= \pmat{g(1)\\ g(0)}
}
for $x=(f,g)\tp\in \Dom(\AA)$.
Moreover, we define $Q=\pmatsmall{0\\ 1}\in \Lin(\C,\C^2)$ and $B_i=0$. 
Then the operators $\BB$ and $\CC$ have the structure in~\eqref{eq:BQCstruct} with $U_h=\C$, $Y_d=\C$, and $Y_m=\C$ and with $\BB_h x= f(0) $, $\BB_d x=f(1)$, $\CC_dx =g(1)$, and $\CC_m x=g(0)$
for $x=(f,g)\tp\in \Dom(\AA)$.
Similar analysis as in~\citel{NicPau25}{Sec.~4.1} shows that $(\BB,\AA,\CC,Q,B_i)$ is a well-posed boundary node on \BNspStandard.

The 
solution $w$ of the original wave equation can be recovered from the classical solution  of the associated abstract system by integrating the state components $w'$ and $\dot w$ with respect to $\xi$ and $t$, respectively.
In the following we consider generalised solutions which are defined as limits of classical solutions of the abstract system as in \cref{def:Solutionsnonlin}.
Our Active Disturbance Rejection Controller has the form
\eqn{
\label{eq:Wave1Dcontr}
\begin{cases}
\ddot w_s(\xi,t) = w_s''(\xi,t), \quad 
\ddot w_i(\xi,t) = w_i''(\xi,t), \quad
 \ddot{\hat{w}}(\xi,t) = \hat w''(\xi,t)\\
w_s'(0,t) =  \dot w_s(0,t)-\ell \dot w(0,t)\\
w_s'(1,t) = -\kappa \dot{\hat{w}}(1,t) -w_i'(1,t)  + u_1(t)\\
w_i'(0,t)  =\ell \dot w_i(0,t), \qquad 
\dot w_i(1,t) = \dot w(1,t) - \dot w_s(1,t)\\
\hat w'(0,t)  =  \ell  \dot{\hat{w}}(0,t)+\ell u_3(t) - \dot w(0,t)\\
\hat w'(1,t) = -\kappa \dot{\hat{w}}(1,t) + u_1(t)\\
\hat{d}(t) = w_i'(1,t) \\
u(t)  = -\kappa \dot{\hat{w}}(1,t) -w_i'(1,t)  + u_1(t).
\end{cases}
}
The closed-loop state is $x_e=(x,x_s,x_i,\hat x)\tp\in X_e=L^2(0,1)^8$ with
$x_s(t)=(w_s'(\cdot,t),\dot w_s(\cdot,t))\tp$,
$x_i(t)=(w_i'(\cdot,t),\dot w_i(\cdot,t))\tp$, and
$\hat x(t)=(\hat w'(\cdot,t),\dot{\hat{w}}(\cdot,t))\tp$
 and its output is $y_e=(y,\hat y,\hat d)\tp =(\dot w(1,\cdot),\dot w(0,\cdot),\dot{\hat{w}}(1,\cdot),\dot{\hat{w}}(0,\cdot),w_i'(1,\cdot))\tp$.
 Note that we do not include  $u_0=-\kappa\dot{\hat{w}}(1,\cdot)$ as an output since 
it can be obtained from $\hat y$.
Our main result below shows that the controller~\eqref{eq:Wave1Dcontr} rejects the disturbance and stabilises the wave system~\eqref{eq:Wave1D}.

\begin{proposition}
\label{prp:Wave1Dmain}
Assume that $\phi:\C\to \C$ is globally Lipschitz continuous and let $\kappa>0$ and $\ell>0$.
For any initial conditions 
\eq{
x_{e1}:=(w(\cdot,0),w_s(\cdot,0),w_i(\cdot,0),\hat w(\cdot,0))\tp&\in H^1(0,1)^4\\
x_{e2}:=(\dot w(\cdot,0),\dot w_s(\cdot,0),\dot w_i(\cdot,0),\dot{\hat{w}}(\cdot,0))\tp&\in L^2(0,1)^4
}
and 
$u_e=(u_1,u_2,d)\tp\in \Lploc[2](0,\infty;\C^3)$ the closed-loop system consisting of~\eqref{eq:Wave1D} and~\eqref{eq:Wave1Dcontr} has a unique generalised solution $(x_e,u_e,y_e)$ satisfying $x_e(0)=(x_{e1}',x_{e2})\tp$. If $x_{e1}\in H^2(0,1)^4$, $x_{e2}\in H^1(0,1)^4$, and $u_e=(u_1,u_2,d)\tp\in \Hloc{1}(0,\infty;\C^3)$ satisfy the boundary conditions in~\eqref{eq:Wave1D} and~\eqref{eq:Wave1Dcontr} for $t=0$, then 
$(x_e,u_e,y_e)$ is a classical solution of the closed-loop system and $y_e\in \Hloc{1}(0,1;\C^5)$.
There exists $\gw_0<0$ such that for all $\gw\ge\gw_0$ there exist constants $M_d,M>0$ such that
every such generalised solution $(x_e,u_e,y_e)$ has the following properties:
\begin{itemize}
\item
We have
\ieq{
\displaystyle\norm{\hat d-\dtot}_{L^2_{\gw_0}(0,\infty)} \le M_d \norm{(x(0),x_s(0),x_i(0))\tp}_{X^3}.
}
\item 
If $u_e\in L_{\gw}^2(0,\infty;U_e)$, then $(y,\hat y)\tp\in L_\gw^2(0,\infty;Y^2)$ and
\eq{
\norm{x(t)} + \norm{\hat x(t)} &\le Me^{\gw_0 t} \norm{x_e(0)}_{X_e} + Me^{\gw t }\norm{(u_1,u_2)\tp}_{L_{\gw}^2(0,\infty)}\\
 \norm{(y,\hat y)\tp}_{L^2_{\gw}(0,\infty)} &\le M \norm{x_e(0)}_{X_e} + \norm{(u_1,u_2)\tp}_{L_{\gw}^2(0,\infty)}.
}
\item 
If $(u_1,u_2)\tp\in L^2(0,\infty;\C^2)$ and if 
 $d\in L^\infty(0,\infty)$
or
 $d\in L^2(0,\infty)$, 
 then 
\ieq{
\sup_{t\ge0}\; \norm{x_e(t)}_{X_e}<\infty.
}
\end{itemize}
\end{proposition}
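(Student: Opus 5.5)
The plan is to realise the controller \eqref{eq:Wave1Dcontr} as an instance of the abstract controller \eqref{eq:ADRController} for the boundary node $(\BB,\AA,\CC,Q,0)$ introduced above. Then I verify Assumption~\ref{ass:ADRCass}, prove exponential stability of $T_K$, $T_L$ and $T_d$, and read off the claims from \cref{thm:ADRCmain}.

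\textbf{Identification of the controller parameters.} I take $L_i=0$ and $\KK x=-\kappa g(1)=-\kappa\CC_d x$, so that $u_0=\KK\hat x=-\kappa\dot{\hat w}(1,\cdot)$. For the output injection I take $L\in\Lin(\C^2,\C^2)$ with $L_d=0$ and only the entry acting from $Y_m$ into $U_h$ nonzero. Its size is fixed by $\ell$, so that $(\BB_h-L_h\CC)x=0$ encodes $f(0)=\ell g(0)$, i.e.\ the condition $w_i'(0,t)=\ell\dot w_i(0,t)$. With $L_d=0$ we get $\hat d=\BB_d x_i=w_i'(1,\cdot)$. Comparing \eqref{eq:ADRCcontrxsBC}--\eqref{eq:ADRCcontrxhatBC} with the boundary conditions in \eqref{eq:Wave1Dcontr} is routine but sign-sensitive. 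I would do it carefully, possibly after a harmless rescaling of $u_2$ via \cref{lem:BnodeModifications}(b). The initial state of the abstract closed loop is $x_e(0)=(x_{e1}',x_{e2})$, suitably interleaved. The conditions $x_{e1}\in H^2$ and $x_{e2}\in H^1$ correspond exactly to $x_e(0)\in\Dom(\AA_e)$, which is where the classical-solution statement comes from.

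\textbf{Verifying Assumption~\ref{ass:ADRCass}.} For (a), $\KK=[-\kappa,0]\CC$ and $L_i=0$. Hence well-posedness of $(\BB,\AA,\pmatsmall{\CC\\ \KK},[Q,L],[0,0])$ follows from that of $(\BB,\AA,\CC,I,0)$ by \cref{lem:BnodeModifications}(b). Part (b) holds because $\phi_s=0$ and $\phi_o(y)=\phi(y_2)$ is globally Lipschitz.

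For (c), integration by parts gives
\[
\re\iprod{\AA x}{x}=\re\bigl(f(1)\overline{g(1)}-f(0)\overline{g(0)}\bigr)=\re\iprod{\BB x}{\pmatsmall{0&I\\I&0}\CC x}.
\]
So after reordering outputs the node is impedance passive. Both feedbacks $f(1)=-\kappa g(1)$ and $f(0)=\ell g(0)$ have the dissipative sign. Hence \cref{lem:IPBCSWP}(c), applied with $K=\kappa$ and $K=\ell$ respectively on the relevant one-dimensional channel, yields $(I-P_K)^{-1},(I-P_L)^{-1}\in H_\infty(\C_0^+)$.

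For (d), I use \cref{rem:Adsyssuffcond}. The transfer function $P_{dd}$ maps $f(1)$ to $g(1)$ under $f(0)=0$. Solving $\lambda g=f'$, $\lambda f=g'$ gives $P_{dd}(\lambda)=\coth\lambda$, whose inverse $\tanh\lambda$ is bounded on $\C_\beta^+$ for any $\beta>0$. The remark also requires $L_0\le 0$, which again depends on the sign bookkeeping above.

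\textbf{Exponential stability, the main step.} This is the step I expect to be the main obstacle. The three semigroups correspond to three wave operators on $(0,1)$:
\begin{itemize}
\item[] $T_K$: the conditions $f(0)=0$ and $f(1)=-\kappa g(1)$ (Neumann at $0$, boundary damping at $1$);
\item[] $T_L$: the conditions $f(1)=0$ and $f(0)=\ell g(0)$;
\item[] $T_d$: the conditions $g(1)=0$ and $f(0)=\ell g(0)$ (Dirichlet-type velocity condition at $1$, damping at $0$).
\end{itemize}
All three generate contractions, by the dissipativity computation above. For each I would compute the spectrum explicitly via the characteristic equation $e^{2\lambda}=\pm\frac{1-\kappa}{1+\kappa}$ (respectively with $\ell$). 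This gives eigenvalues on the vertical line $\re\lambda=\frac12\ln\bigl|\frac{1-\kappa}{1+\kappa}\bigr|$, or empty spectrum when $\kappa=1$ or $\ell=1$.

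The resolvent is explicit, so I can bound $\norm{(\lambda-A)^{-1}}$ uniformly on $\C_{\omega}^+$ for $\omega$ greater than that real part. The Gearhart--Pr\"uss theorem then gives the growth bound. Alternatively, Riesz-basis or d'Alembert/characteristics arguments give it directly.

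\textbf{Conclusion.} Set $\gw_0<0$ slightly larger than $\max\set{\gw_0(T_K),\gw_0(T_L),\gw_0(T_d)}$. Applying \cref{thm:ADRCmain} with $\gw_d=\gw_s=\gw_0$ then yields existence and uniqueness of solutions and the three listed properties. I drop the redundant output $u_0$ via \cref{lem:BnodeModifications}(b).
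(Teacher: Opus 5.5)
Your proposal is correct and follows the paper's proof essentially step for step. The parameter choices are the same ($L_i=0$, $L$ with only the $Y_m\to U_h$ entry nonzero, $\KK=-\kappa\CC_d$), Assumption~\ref{ass:ADRCass} is verified the same way (via \cref{lem:BnodeModifications}, \cref{lem:IPBCSWP}(c) and Remark~\ref{rem:Adsyssuffcond} with $P_{dd}=\coth$), and the conclusion again comes from \cref{thm:ADRCmain}. The one real difference is how you get exponential stability of $T_K,T_L,T_d$: you compute the spectra and resolvents explicitly and apply Gearhart--Pr\"uss, where the paper cites Cox--Zuazua and a Gearhart--Pr\"uss--Greiner argument; your route is self-contained and also gives the exact decay rates. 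One small step is missing and should be added, as the paper does: your step (a) and Remark~\ref{rem:Adsyssuffcond} both presuppose that $(\BB,\AA,\CC,I,0)$ itself is well-posed, and this follows from your passivity identity, $P_0\in H_\infty(\C_\gb^+;\Lin(\C^2))$ for $\gb>0$, and \cref{lem:IPBCSWP}(a).
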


\begin{proof}
The abstract representation of~\eqref{eq:Wave1D} has the form~\eqref{eq:plant} when we define $\phi_s=0$ and $\phi_o(y)=\phi(y_2)$ for $y=(y_1,y_2)\tp\in Y$.
With the choices
$x_s(t)=(w_s'(\cdot,t),\dot w_s(\cdot,t))\tp$,
$x_i(t)=(w_i'(\cdot,t),\dot w_i(\cdot,t))\tp$, and
$\hat x(t)=(\hat w'(\cdot,t),\dot{\hat{w}}(\cdot,t))\tp$ of state variables the controller can be represented in the form~\eqref{eq:ADRController} on $X^3=L^2(0,1)^6$ with $L_i=0$ and $L=\pmatsmall{0&-\ell \\ 0&0}\in \Lin(Y,U_b)$
 and with
$\KK\in \Lin(\Dom(\AA),U)$ defined by
$\KK x=-\kappa g(1)$ for $x=(f,g)\tp\in \Dom(\AA)$.

To apply \cref{thm:ADRCmain} we will check that the conditions in Assumption~\ref{ass:ADRCass} are satisfied. The functions $\phi_s=0$ and $\phi_o$ satisfy the required conditions and~\eqref{eq:BQCstruct} hold.
As
explained above, $(\BB,\AA,\CC,Q,B_i)$ is a well-posed boundary node on \BNspStandard. 
Integration by parts shows that 
\eqn{
\label{eq:WaveIP}
\re \iprod{\AA x}{x}_X = \re \iprod{\BB x}{\pmatsmall{0&I\\I&0}\CC x}, \qquad x\in \Dom(\AA)
}
and thus
similar arguments as in~\citel{NicPau25}{Sec.~4.1} can be used to show that $(\BB,\AA,\CC,I,0)$ is a boundary node on $\BNsp{U_b}{U_b}{X}{U_b}$. 
A direct computation shows that its transfer function $P_0$ is given by
\eq{
P_0(\gl) = \pmat{P_{dh}(\gl)& P_{dd}(\gl)\\ P_{mh}(\gl) & P_{md}(\gl)}
= \pmat{\frac{1}{\sinh(\gl)}&\frac{1}{\tanh(\gl)}\\ \frac{1}{\tanh(\gl)}&\frac{1}{\sinh(\gl)}}, \qquad \gl \in \C_0^+.
}
Since $P_0\in H_\infty(\C_\gb^+;\Lin(U_b))$ for every $\gb>0$, we have from \cref{lem:IPBCSWP}(a) that $(\BB,\AA,\CC,I,0)$ is well-posed.
The structure of $\KK$ and \cref{lem:BnodeModifications} can be used to verify that 
$(\mcB ,\mcA,  \pmatsmall{\mcC\\ \mcK}, [Q,L], [0,L_i])$ is a well-posed boundary node on $\BNsp{U_b}{U\times Y}{X}{Y\times U}$, as required.
Moreover, since $P_{dd}$ satisfies $P_{dd}(\cdot)\inv\in H_\infty(\C_\gb^+;\Lin(U))$ for any $\gb>0$, Remark~\ref{rem:Adsyssuffcond} implies that $(\BB_h-L_h\CC,\AA_d,\BB_d-L_d\CC,0,0)$ 
 is a boundary node on $\BNsp{ U_h}{\set{0}}{X}{U_d}$ with a well-posed output map.
Finally, the identity~\eqref{eq:WaveIP} and \cref{lem:IPBCSWP}(c) imply 
$(I-P_K(\cdot))\inv \in H_\infty(\C_{\gb}^+;\Lin(U)) $ and $ (I-P_L(\cdot))\inv \in H_\infty(\C_{\gb}^+;\Lin(Y))$ for some $\gb>0$.
This shows that the wave system satisfies Assumption~\ref{ass:ADRCass}.
Therefore the existence of generalised and classical solutions follows from \cref{thm:ADRCmain}. 

The semigroups $T_K$, $T_L$, and $T_d$ describe the dynamics of the one-dimen\-sion\-al wave equation with damping at one of the endpoints of the spatial interval. The semigroups $T_K$ and $T_L$ are exponentially stable by~\citel{CoxZua95}{Thm.~10.1}, and similarly $T_d$ is a contraction semigroup which can be shown to be exponentially stable using, e.g.,~\citel{ChiPau23}{Thm.~3.5} and the Gearhart--Pr\"uss--Greiner theorem. Therefore the estimates for the generalised solutions of the closed-loop system follow directly from \cref{thm:ADRCmain}.
\end{proof}

\subsection{A Heat Equation}
We consider  stabilisation for 
a one-dimensional heat equation on the interval $(0,1)$, 
\begin{subequations}
\label{eq:Heat1D}
\eqn{
\dot w(\xi,t) &= w''(\xi,t)\\
-w'(0,t) &= u(t)+d(t)+\phi(w(0,t)) , \quad  w'(1,t) = 0\\
y(t) &=  w(0,t)
}
\end{subequations}
for $\xi\in (0,1)$ and $t>0$
with initial condition $w(\cdot,0)=  w_0\in L^2(0,1)$. Here $d\in \Lploc[2](0,\infty)$ is an unknown input disturbance and the term $\phi(w(0,t))$ with an unknown globally Lipschitz function $\phi:\C\to \C$ is an unknown nonlinearity.
Our aim is to reject the unknown boundary disturbance, compensate for the nonlinearity, and achieve exponential convergence of the solutions to zero. 
Active disturbance rejection for heat equations has been previously considered in~\cite{FenGuo17d,ZhoGuo17,FenXu20}.
We may formulate the heat equation as a boundary control system of the form~\eqref{eq:plant} on $X=\Lp[2](0,1)$ with the state variable 
$x(t)=w(\cdot,t)$.
We let $U=\C$, $U_b=\C^2$, and $Y=\C$ and
define  $\AA: \Dom(\AA)\subset X\to X$ with domain $\Dom(\AA)=H^2(0,1)$ and $\BB\in \Lin(\Dom(\AA),U_b)$ and $\CC\in \Lin(\Dom(\AA),Y)$ by $\AA x=x''$, $\BB x=(x'(1),-x'(0))\tp$, $\CC x=x(0)$ for $x\in \Dom(\AA)$.
Moreover, we define $Q=\pmatsmall{0\\ 1}\in \Lin(\C,\C^2)$ and $B_i=0$. 
Then the operators $\BB$ and $\CC$ have the structure in~\eqref{eq:BQCstruct} with $U_h=\CC$, $Y_d=\C$, and $Y_m=\set{0}$ and with $\BB_h x= x'(1) $, $\BB_d x=-x'(0)$, $\CC_dx =x(0)$, and $\CC_m x=0$
for $x\in \Dom(\AA)$.
Similar analysis as in~\citel{NicPau25}{Sec.~4.1} shows that $(\BB,\AA,\CC,Q,B_i)$ is a well-posed boundary node on \BNspStandard.

Our Active Disturbance Rejection Controller has the form
\eqn{
\label{eq:Heat1Dcontr}
\begin{cases}
\dot w_s(\xi,t) = w_s''(\xi,t), \quad 
\dot w_i(\xi,t) = w_i''(\xi,t), \quad
 \dot{\hat{w}}(\xi,t) = \hat w''(\xi,t)\\
-w_s'(0,t) + \ell w_s(0,t) =  -\kappa \hat w(0,t) +w_i'(0,t)-\ell w_i(0,t)  \\
\hspace{3.9cm}+\, \ell  w(0,t)+ u_1(t)\\
w_s'(1,t) = 0, \qquad 
w_i'(1,t)  =0, \qquad
\hat w'(1,t)=0\\
 w_i(0,t) =  w(0,t) -  w_s(0,t)\\
-\hat w'(0,t) +\ell \hat w(0,t) = -\kappa \hat w(0,t)+ u_1(t) -\ell u_2(t) + \ell w(0,t)\\
\hat{d}(t) = -w_i'(0,t)+\ell w_i(0,t) \\
u(t)  = -\kappa \hat w(0,t) +w_i'(0,t)-\ell w_i(0,t)  + u_1(t).
\end{cases}
}
The closed-loop state is $x_e=(x,x_s,x_i,\hat x)\tp\in X_e=L^2(0,1)^4$ with
$x_s(t)=w_s(\cdot,t)$,
$x_i(t)=w_i(\cdot,t)$, and
$\hat x(t)=\hat w(\cdot,t)$
 and its output is $y_e=(y,\hat y)\tp =( w(0,\cdot),\hat{w}(0,\cdot))\tp$. Note that $\hat d$ is not included as an output because we will use \cref{thm:ADRCParab}, and $u_0$ may also be removed due to the fact that $u_0(t)=-\kappa \hat y(t)$.
Our main result below shows that the controller~\eqref{eq:Heat1Dcontr} stabilises the heat system~\eqref{eq:Heat1D}.

\begin{proposition}
\label{prp:Heat1Dmain}
Assume that $\phi:\C\to \C$ is globally Lipschitz continuous and let $\kappa>0$ and $\ell>0$.
For any initial conditions 
\ieq{
x_{e0}:=(w(\cdot,0),w_s(\cdot,0),$ $w_i(\cdot,0),\hat w(\cdot,0))\tp\in L^2(0,1)^4
}
and 
$u_e=(u_1,u_2,d)\tp\in \Lploc[2](0,\infty;\C^3)$ the closed-loop system consisting of~\eqref{eq:Heat1D} and~\eqref{eq:Heat1Dcontr} has a unique generalised solution $(x_e,u_e,y_e)$ satisfying $x_e(0)=x_{e0}$. If $x_{e0}\in H^2(0,1)^4$ and $u_e=(u_1,u_2,d)\tp\in \Hloc{1}(0,\infty;\C^3)$ satisfy the boundary conditions in~\eqref{eq:Heat1D} and~\eqref{eq:Heat1Dcontr} for $t=0$, then 
$(x_e,u_e,y_e)$ is a classical solution of the closed-loop system and $y_e\in \Hloc{1}(0,1;\C^2)$.
There exists $\gw_0<0$ such that for all $\gw\ge\gw_0$ there exist a  constant $M>0$ such that
every such generalised solution $(x_e,u_e,y_e)$ has the following properties:
\begin{itemize}
\item 
If $u_e\in L_{\gw}^2(0,\infty;U_e)$, then $(y,\hat y)\tp\in L_\gw^2(0,\infty;Y^2)$ and
\eq{
\norm{x(t)} + \norm{\hat x(t)} &\le Me^{\gw_0 t} \norm{x_e(0)}_{X_e} + Me^{\gw t }\norm{(u_1,u_2)\tp}_{L_{\gw}^2(0,\infty)}\\
 \norm{(y,\hat y)\tp}_{L^2_{\gw}(0,\infty)} &\le M \norm{x_e(0)}_{X_e} + \norm{(u_1,u_2)\tp}_{L_{\gw}^2(0,\infty)}.
}
\item 
If $(u_1,u_2)\tp\in L^2(0,\infty;\C^2)$ and if 
 $d\in L^\infty(0,\infty)$
or
 $d\in L^2(0,\infty)$, 
 then 
\ieq{
\sup_{t\ge0}\; \norm{x_e(t)}_{X_e}<\infty.
}
\end{itemize}
\end{proposition}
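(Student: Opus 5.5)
The plan is to apply \cref{thm:ADRCParab} rather than \cref{thm:ADRCmain}. The inverse system for the heat equation imposes the Dirichlet condition $w_i(0)=\cdot$ and reads out $-w_i'(0)+\ell w_i(0)$, so its output map is not well-posed on $L^2$. First I will write~\eqref{eq:Heat1D} in the form~\eqref{eq:plant} with $\phi_s=0$ and $\phi_o=\phi$. Then I will identify the controller~\eqref{eq:Heat1Dcontr} with~\eqref{eq:ADRController} by taking $L_i=0$, $L=\pmatsmall{0\\ -\ell}\in\Lin(\C,\C^2)$ (so $L_h=0$, $L_d=-\ell$) and $\KK x=-\kappa x(0)$. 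With these choices $\BB_d-L_d\CC=-x'(0)+\ell x(0)$, $(\BB-L\CC)x_s=Qu-Ly$ gives the stated boundary condition for $w_s$, and similarly for $\hat w$. I will check these identifications line by line.

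Next I verify parts (a)--(c) of Assumption~\ref{ass:ADRCass}. Integration by parts gives $\re\iprod{\AA x}{x}=-\norm{x'}^2+\re\iprod{\BB x}{(x(1),x(0))\tp}\le\re\iprod{\BB x}{\tilde\CC x}$ with $\tilde\CC x=(x(1),x(0))\tp$. For (a), I use \cref{lem:IPBCSWP}(a) on $(\BB,\AA,\tilde\CC,I,0)$: its transfer function is explicit in terms of $\sqrt\gl$, and entries such as $\coth(\sqrt\gl)/\sqrt\gl$ are bounded on $\C_\gw^+$. \cref{lem:BnodeModifications}(b) then handles the output $\pmatsmall{\CC\\ \KK}$ and the input $[Q,L]$, since $L$ only enters through $Q$. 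For (c), $I-P_K=1+\kappa P_{dd}$ and $I-P_L=1+\ell P_{dd}$, where $P_{dd}(\gl)=\coth(\sqrt\gl)/\sqrt\gl$. By \cref{lem:IPBCSWP}(c) applied with $K=\kappa$ (resp.\ $\ell$) on the collocated pair $(\BB_d,\CC_d)$ with the Neumann condition at $1$, both inverses lie in $H_\infty(\C_0^+)$.

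Then come the two extra hypotheses of \cref{thm:ADRCParab}. For the output-map well-posedness of $(\BB_1,\AA_1,\CC_1,0,0)$, I note that $\ker\BB_1$ is a cascade: the $w_i$-component is the heat semigroup with $w_i(0)=0$, $w_i'(1)=0$, which is analytic. The first component has Neumann data $-x'(0)=(\BB_d-L_d\CC)z$. I will show via \cref{rem:WPtestclass} that $z\mapsto(\CC x,\KK x)$ is $L^2$-bounded. This reduces to the analytic Dirichlet semigroup $T_0$ satisfying $\int_0^\tau\abs{w_i'(0,t)}^2dt\lesssim\norm{w_i(0)}^2$, which follows from the eigenfunction expansion $\sin((k+\frac12)\pi\xi)$: we get $\sum_k k^2e^{-2k^2t}$ integrated, giving $\sum_k \abs{c_k}^2$ up to constants. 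Feeding this $L^2$ signal as Neumann input into the well-posed node, and reading the Dirichlet trace through $P_{dd}$, gives the bound. For the resolvent condition, I compute $\norm{H(\gl)}\sim\abs{\gl}^{-3/4}$ explicitly, since the Neumann lift is $\cosh(\sqrt\gl(1-\xi))/(\sqrt\gl\sinh\sqrt\gl)$, whose $L^2$ norm is about $\abs{\gl}^{-1/2}\abs{\gl}^{-1/4}$. On the other hand, $\norm{(\BB_d-L_d\CC)(\gl-A_d)\inv}\lesssim\abs{\gl}^{1/4}$, by the same eigenfunction sum $\sum k^2/\abs{\gl+k^2}^2\lesssim\abs{\gl}^{-1/2}$ on right half-planes. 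The product is then bounded on $\C_\gb^+$ for $\gb>0$. I expect this parabolic trade-off to be the main obstacle, particularly getting uniform estimates near the imaginary direction, and I will carry it out through explicit series.

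Finally, I check exponential stability. $T_K$ and $T_L$ are heat semigroups with Robin conditions $-x'(0)+cx(0)=0$, $c=\kappa,\ell>0$, and $x'(1)=0$; they are self-adjoint with spectrum bounded by a negative number. $T_d$ is the Dirichlet--Neumann semigroup with $\gw_0=-\pi^2/4$. \cref{thm:ADRCParab} then yields existence, uniqueness, classical regularity and parts (b), (c), with $\gw_0$ chosen between $\max\set{\gw_0(T_K),\gw_0(T_L),\gw_0(T_d)}$ and $0$.
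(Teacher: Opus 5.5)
Most of your plan matches the paper's proof. You use the same identifications $L_i=0$, $L=\pmatsmall{0\\-\ell}$ and $\KK=-\kappa\CC$, apply \cref{lem:IPBCSWP} for Assumption~\ref{ass:ADRCass}(a),(c), use $\norm{H(\gl)}\lesssim\abs{\gl}^{-3/4}$ for the product condition, and argue Robin and Dirichlet--Neumann stability of $T_K,T_L,T_d$.

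\textbf{The gap} is your proof that $(\BB_1,\AA_1,\CC_1,0,0)$ has a well-posed output map. You reduce it to $\int_0^\tau\abs{w_i'(0,t)}^2dt\lesssim\norm{w_i(0)}^2$ along $T_d$, and this estimate is false. Since $w_i(0,t)=0$ along $T_d$, it is exactly the well-posedness of the output map of the inverse system $(\BB_h-L_h\CC,\AA_d,\BB_d-L_d\CC,0,0)$, which your own first paragraph (and the paper) says fails. Your eigenfunction computation keeps only the diagonal terms, but the exponentials $e^{-\mu_k^2 t}$ are not orthogonal. With $\mu_k=(k+\frac12)\pi$ and $c_k=\iprod{w_i(0)}{\sqrt2\sin(\mu_k\cdot)}$ one has $\int_0^\infty\abs{w_i'(0,t)}^2dt=\sum_{j,k}c_j\overline{c_k}\,\frac{2\mu_j\mu_k}{\mu_j^2+\mu_k^2}$. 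This Gram matrix is unbounded on $\ell^2$: for $c_k=1$, $k\le N$, the sum grows like $N^2$ while $\sum_k\abs{c_k}^2=N$. Equivalently, your own series shows that $\norm{\BB_{dL}(\gl-A_d)\inv}$ has exact order $\gl^{-1/4}$ as $\gl\to+\infty$. That violates the necessary bound $\lesssim(\re\gl)^{-1/2}$ for admissibility. So there is no $L^2$ Neumann signal to feed into the well-posed heat node, and the time-domain cascade argument collapses. This is precisely the situation \cref{thm:ADRCParab} is designed for.

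\textbf{What is needed} is to use the smoothing of the Neumann-to-Dirichlet map acting on the non-$L^2$ intermediate signal. The paper does this in the frequency domain. It suffices to treat $\CC_1'=[\CC,0]$, since $\KK=-\kappa\CC$. Then $\CC_1'(\gl-A_1)\inv x=\CC(\gl-A)\inv x_1+P(\gl)\BB_{dL}(\gl-A_d)\inv x_2$ for $x=(x_1,x_2)$. The first term is controlled by the well-posed output map of $(\BB,\AA,\CC,Q,0)$. For the second, $\abs{P(\gl)}=\abs{\coth(\sqrt\gl)/\sqrt\gl}\lesssim\abs{\gl}^{-1/2}$ together with $\norm{\BB_{dL}(\gl-A_d)\inv}\lesssim\abs{\gl}^{-1/4}$ gives the bound $\abs{\gl}^{-3/4}\norm{x_2}$. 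This is square integrable on vertical lines uniformly in $\re\gl>\gw>0$, so Paley--Wiener yields the output estimate.

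Two further points:
\begin{itemize}
\item The exponent matters. The bound $\abs{\gl}^{1/4}$ you wrote contradicts your own estimate $\sum_k k^2/\abs{\gl+k^2}^2\lesssim\abs{\gl}^{-1/2}$. It suffices for the product condition with $H$, but here it would only give $\abs{\gl}^{-1/4}$, which is not in $L^2$ on vertical lines.
\item You must also justify that $A_1=\AA_1\vert_{\ker(\BB_1)}$ generates a semigroup, which is part of the boundary node property. The paper gets this from the block-triangular resolvent: its off-diagonal entry $H(\gl)\BB_{dL}(\gl-A_d)\inv$ is $O(\abs{\gl}^{-1})$ on a sector, so $A_1$ generates an analytic semigroup.
\end{itemize}
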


\begin{proof}
The abstract representation of~\eqref{eq:Heat1D} has the form~\eqref{eq:plant} when we define $\phi_s=0$ and $\phi_o=\phi$.
With the choices
$x_s(t)=w_s(\cdot,t)$,
$x_i(t)=w_i(\cdot,t)$, and
$\hat x(t)=\hat w(\cdot,t)$ of state variables the controller can be represented in the form~\eqref{eq:ADRController} on $X^3=L^2(0,1)^3$ 
with $L_i=0\in \Lin(Y,X)$, $L=\pmatsmall{0\\-\ell}$ with $L_h=0\in \Lin(Y,U_h)$ and $L_d=-\ell \in \Lin(Y,U)$, and with $\KK x=-\kappa \CC x=-\kappa x(0)$ for $x\in \Dom(\AA)$.
In this situation $(\BB,\AA,\CC,Q,B_i)$ is indeed a well-posed boundary node on \BNspStandard\ and~\eqref{eq:BQCstruct} hold, but the system $(\BB_d-L_d\CC,\AA_d,\BB_d-L_d\CC,0,0)$ does not have a well-posed output map. Because of this, we will employ \cref{thm:ADRCParab}.

We will first check that (a)--(c) hold in Assumption~\ref{ass:ADRCass}.
 The functions $\phi_s=0$ and $\phi_o$ satisfy the required conditions and~\eqref{eq:BQCstruct} hold, and since $L=-\ell Q$ and $\KK=-\kappa \CC$, 
$(\mcB ,\mcA,  \pmatsmall{\mcC\\ \mcK}, [Q,L], [0,L_i])$ is a well-posed boundary node on $\BNsp{U_b}{U\times Y}{X}{Y\times U}$.
Applying \cref{lem:IPBCSWP}(c) to the boundary node $(\BB_d,\AA\vert_{\ker(\BB_h)}),\CC,I,0)$ on $\BNsp{U}{ U}{X}{U}$ with $K=-\kappa I$ and $K=-\ell I$ shows that Assumption~\ref{ass:ADRCass}(c) holds with $\gb=0$.
Finally, we will show that $(\BB_1,\AA_1,\CC_1,0,0)$ in \cref{thm:ADRCParab} is a boundary node with a well-posed output map.
Similarly as in the proof of~\citel{FkiPau26arxiv}{Prop.~2.11}, the resolvent operator of $A_1:= \AA_1\vert_{\ker(\BB_1)}$ is given by
\eq{
(\gl-A_1)\inv = \pmat{(\gl-A)\inv & H(\gl) \BB_{dL} (\gl-A_d)\inv \\ 0& (\gl-A_d)\inv}
}
for $\gl\in \rho(A)\cap \rho(A_d)$, where
$H$ is the transfer function of $(\BB,\AA,\CC,Q,0)$ and
 $\BB_{dL}x=(\BB_d-L_d\CC)x=-x'(0)+\ell x(0)$ for $x\in \Dom(\AA)$.
The operator $A_d$  corresponds to the heat equation~\eqref{eq:Heat1D} with mixed homogeneous Dirichlet--Neumann boundary conditions. Thus $A$ and $A_d$ generate analytic semigroups $T$ and $T_d$, respectively.
Direct computations show that for any $\gd\in (0,\pi/2)$ we have $ \norm{H(\gl)} \lesssim \abs{\gl}^{-3/4}$ and $ \norm{\BB_{dL} (\gl-A_d)\inv}\lesssim \abs{\gl}^{-1/4} $ for $\gl\in \C\setminus \set{0}$ with $\abs{\arg{\gl}}\le \pi/2+\gd$.
This directly implies that
$\gl \mapsto \norm{H(\gl)}\norm{(\BB_d-L_d\CC)(\gl-A_d)\inv}$ is uniformly bounded on $\C_\gb^+$ for some $\gb\in \R$ and that
 $A_1$ generates an analytic semigroup on $X^2$.
 Moreover, $\BB_1$ is clearly surjective, and since its codomain is finite-dimensional, it has a bounded right inverse $\BB_1^r\in \Lin(U\times U_h,\Dom(\AA_1))$.
Thus $(\BB_1,\AA_1,\CC_1,0,0)$ is a boundary node.
Since $\KK=-\kappa \CC$, $(\BB_1,\AA_1,\CC_1,0,0)$ has a well-posed output map if  $(\BB_1,\AA_1,\CC_1',0,0)$ with $\CC_1'=[\CC,0]\in \Lin(\Dom(\AA_1),Y)$ has a well-posed output map.
The Paley--Weiner theorem implies that 
$(\BB_1,\AA_1,\CC_1',0,0)$ has a well-posed output map if for $\gw>0$ we have $\sup_{\gs>\gw}\norm{\CC_1'(\cdot+\gs-A_d)\inv x}_{L^2(i\R)}\lesssim \norm{x}$ for all $x\in \Dom(A_1)$.
We have $\norm{\CC_1'(\gl-A_d)\inv x}=\norm{\CC(\gl-A)\inv x_1+P(\gl) \BB_{dL}(\gl-A_d)\inv x_2}$ for $x=(x_1,x_2)\tp\in \Dom(A_1)$. Since $(\BB,\AA,\CC,Q,0)$ has a well-posed output map and since
for any $\gw>0$
a direct estimate   
and our earlier analysis show that $\abs{P(\gl)}\lesssim \abs{\gl}^{-1/2}$ and
$ \norm{\BB_{dL} (\gl-A_d)\inv}\lesssim \abs{\gl}^{-1/4} $ for $\gl\in \C_\gw^+$, we obtain the required estimate.
This shows that the assumptions of \cref{thm:ADRCParab} hold, and
therefore the existence of generalised and classical solutions follows from 
this result.
The operators $A_K$ and $A_L$ correspond to the heat equation~\eqref{eq:Heat1D} with modified boundary conditions $-w'(0,t)=-\kappa w(0,t)$ and $-w'(0,t)=-\ell w(0,t)$, respectively. 
Moreover, the operator $A_d$  corresponds to the heat equation~\eqref{eq:Heat1D} with mixed homogeneous Dirichlet--Neumann boundary conditions.
It is well-known that these three operators generate exponentially stable semigroups.
Therefore the estimates for the generalised solutions of the closed-loop system follow directly from \cref{thm:ADRCParab}.
\end{proof}

\subsection*{Acknowledgement} 
The authors would like to thank Nicolas Vanspranghe for helpful advice on the proof of \cref{prp:BCSNLsolutions}.

\end{document}